\newcolumntype{L}[1]{>{\raggedright\let\newline\\\arraybackslash\hspace{0pt}}m{#1}}
\newlength{\bibitemsep}\setlength{\bibitemsep}{.2\baselineskip plus .05\baselineskip minus .05\baselineskip}
\newlength{\bibparskip}\setlength{\bibparskip}{1pt}
\let\oldthebibliography\thebibliography
\renewcommand\thebibliography[1]{%
  \oldthebibliography{#1}%
  \setlength{\parskip}{\bibitemsep}%
  \setlength{\itemsep}{\bibparskip}%
}
\newcommand{\cc}{\ensuremath{\mathbb{C}}\xspace}
\newtheorem{theorem}{Theorem}[section]
\newtheorem{conjecture}[theorem]{Conjecture}
\newtheorem{definition}[theorem]{Definition}
\newtheorem{examples}[theorem]{Examples}
\newtheorem{remark}[theorem]{Remark}
  \renewcommand{\contentsname}%
    {Table of Contents}%
\begin{document}

\begin{center}
\begin{LARGE}
Root Systems and Quotients of Deformations of Simple Singularities
\end{LARGE}
\vspace{0.3cm}

\begin{Large}
Antoine CARADOT
\end{Large}
\end{center}
\vspace{1cm}

\begin{center}
\textbf{Abstract}
\end{center}

In this article we study quotients of deformations of simple singularities, and attempt to characterize them in terms of subsystems of simple root systems. The quotient of a semiuniversal deformation of a simple singularity of inhomogeneous type $B_r$ ($r \geq 2$), $C_r$ ($r \geq 3$), $F_4$ or $G_2$ by the natural symmetry of the associated Dynkin diagram is a deformation of a simple singularity of homogeneous type $X=D_s$, $E_6$ or $E_7$, but not semiuniversal anymore. Therefore not all subdiagrams of $X$ appear as singular configurations of the fibers of the deformation. We propose a conjecture for the types of singular configurations in terms of sub-root systems of a root system of type $X$. The conjecture is then proved for the types $B_2,B_3,C_3,F_4$ and $G_2$.

\tableofcontents

\section*{Introduction}
\addcontentsline{toc}{section}{Introduction}

Simple singularities have first been studied by F. Klein in \cite{Klein84}, where he classified them as quotients of the complex plane by the action of a finite subgroup $\Gamma$ of $\mathrm{SU}_2$. It was then showed by P. Du Val (\cite{DuVa34}) that the exceptional divisors of the minimal resolution of the isolated singularity of such a quotient form an arrangement of projective lines whose dual graph is a simply-laced Dynkin diagram $\Delta(\Gamma)$, and so the quotient $\cc^2/\Gamma$ is called a simple singularity of type $\Delta(\Gamma)$. P. Slodowy extended in 1978 (and subsequently in \cite{Slo80} Section 6.2)  the definition of a simple singularity to the non simply-laced types by adding a second finite subgroup $\Gamma'$ of $\mathrm{SU}_2$ containing $\Gamma$ as normal subgroup. Then $\Gamma'/\Gamma =\Omega$ acts on $\cc^2/\Gamma$ and this action can be lifted to the minimal resolution of the singularity and induces an action on the exceptional divisors which corresponds to a group of automorphisms of the Dynkin diagram of $\cc^2/\Gamma$.
\newline

Let $\alpha  :  X_\Gamma \rightarrow \mathfrak{h}/W$ be the semiuniversal deformation of a simple singularity of type $\Delta(\Gamma)=A_{2r-1}$ ($r \geq 2$), $D_{r}$ ($r \geq 4$) or $E_6$ obtained by the construction of H. Cassens and P. Slodowy in \cite{CaSlo98}, $\frak{h}$ and $W$ being the Cartan subalgebra and the associated Weyl group of the simple Lie algebra $\mathfrak{g}$ of the same type. It was shown in \cite{Cara17} that the automorphism group $\Omega$ of the Dynkin diagram of $\mathfrak{g}$ acts on $X_\Gamma$ and $\mathfrak{h}/W$ such that $\alpha$ is $\Omega$-equivariant. A result of P. Slodowy then implies that taking the restriction $\alpha^\Omega$ of $\alpha$ over the $\Omega$-fixed points of $\mathfrak{h}/W$ leads to a semiuniversal deformation of a simple singularity which is inhomogeneous and of the same type as the folding of the root system of $\mathfrak{g}$ by $\Omega$. As $\alpha$ is $\Omega$-equivariant, there is an action of $\Omega$ on every fiber of $\alpha^\Omega$, and the quotient leads to a new morphism $\overline{\alpha^\Omega}$, which was shown to be a non-semiuniversal deformation of a simple singularity of homogeneous type $\Delta(\Gamma')$ (cf. \cite{Cara17}). 
\newline 

The aim of this article is to propose a characterization of the types of singularities which appear in the fibers of $\overline{\alpha^\Omega}$ in terms of sub-root systems of a root system of type $\Delta(\Gamma')$. The special fiber $(\overline{\alpha^\Omega})^{-1}(\overline{0})$ is a simple singularity of type $\Delta(\Gamma')$. But as $\overline{\alpha^\Omega}$ is not semiuniversal, all subdiagrams of the Dynkin diagram of type $\Delta(\Gamma')$ cannot be found as singular configurations of the fibers of $\overline{\alpha^\Omega}$. We conjecture that there exists a subset $\Theta$ of simple roots of the root system of type $\Delta(\Gamma')$ such that the Dynkin diagram associated to the singular configuration of any fiber of $\overline{\alpha^\Omega}$ is a subdiagram of the Dynkin diagram of type $\Delta(\Gamma')$ containing the vertices associated to $\Theta$. Furthermore, all such subdiagrams are realized as singular configurations of some fibers of $\overline{\alpha^\Omega}$. 
\newline

In the first section we will present the construction of semiuniversal deformations of the simple homogeneous (\cite{CaSlo98}) and inhomogeneous (\cite{Cara17}) singularities. In the second section we study the regularity of the fibers of $\overline{\alpha^\Omega}$ and prove that they are always all singular. The third and fourth sections are devoted to the main conjecture as well as its proof for the types $B_2$, $B_3$, $C_3$, $F_4$ and $G_2$.
\newline

Throughout this article the base field is the complex number field $\cc$. Furthermore, a Dynkin diagram and its type will often be designated by the same symbol $\Delta$.

\section{Deformations of simple singularities}
\subsection{Simple singularities}\label{sec : BrieskornSlodowy}

\subsubsection{Simple singularities of type $A_r$, $D_r$, $E_6$, $E_7$ and $E_8$}\label{subsub : ADE}

Let $\Gamma$ be a finite subgroup of $\mathrm{SU}_2$. F. Klein showed in \cite{Klein84} that $\Gamma$ is isomorphic to either the cyclic group $\mathcal{C}_n$ of order $n$, the binary dihedral group $ \mathcal{D}_n$ of order $4n$, the binary tetrahedral group $\mathcal{T}$ of order $24$, the binary octahedral group $ \mathcal{O}$ of order $48$, or the binary icosahedral group $\mathcal{I}$ of order $120$, and that the quotient $\mathbb{C}^2/\Gamma$ is a surface which injects into $\mathbb{C}^3$ as the zero set of a polynomial $R_\Gamma \in \mathbb{C}[X,Y,Z]$. Furthermore this surface presents a unique isolated singularity and is called a \textit{simple singularity}. P. Du Val then proved (\cite{DuVa34}) that if $s \in \mathbb{C}^2/\Gamma$ is the singular point and $\pi :  \widetilde{S} \rightarrow \mathbb{C}^2/\Gamma$ is the minimal resolution of $\mathbb{C}^2/\Gamma$, then the preimage of $s$ is a union of projective lines whose intersection matrix is the additive inverse of a Cartan matrix of type $\Delta(\Gamma)=A_r$, $D_r$ or $E_r$.
\newline 

\renewcommand{\arraystretch}{1.4}
\begin{center}
\centering
\begin{longtable}{|c|c|c|}
\hline
 $\Gamma$ & $R_\Gamma$ &Type of $\Delta(\Gamma)$ \\
\hhline{|=|=|=|}
  $\mathcal{C}_n$ & $X^n + YZ$ &  $A_{n - 1}$ \\
\hline
  $\mathcal{D}_n$ &  $X(Y^2 - X^n) + Z^2$ & $D_{n + 2}$ \\
\hline
  $\mathcal{T}$ & $X^4 + Y^3 + Z^2$ & $E_6$ \\
\hline
  $\mathcal{O}$  & $X^3 + XY^3 + Z^2$ & $E_7$ \\
\hline
$\mathcal{I}$ & $X^5 + Y^3 + Z^2$& $E_8$ \\
 \hline
 \end{longtable}
 {\addtocounter{table}{-1}}
  \captionof{table}{}
\end{center}

\subsubsection{Simple singularities of type $B_r$, $C_r$, $F_4$ and $G_2$}\label{BCFGdefinition}

The definition of the simple singularities of inhomogeneous types is due to P. Slodowy (cf. \cite{Slo80} Section 6.2). 

\begin{definition}\label{def : inhomogeneous}
A \textit{simple singularity} of type $B_r \ (r \geq 2)$, $C_r \ (r \geq 3)$, $F_4$ or $G_2$ is a pair $(X_0,\Omega)$ with $X_0$ a simple singularity (in the sense of Section~\ref{subsub : ADE}) and $\Omega$ a group of automorphisms of $X_0$ according to the following list : 
\newline

\begin{center}
\renewcommand{\arraystretch}{1.4}
  \begin{tabular}{|c|c|c|c|c|}
\hline
Type of $(X_0,\Omega)$ & Type of $X_0$ & $\Gamma$ & $\Gamma'$ & $\Omega$ \\
\hhline{|=|=|=|=|=|}
$B_r \ (r \geq 2)$ & $A_{2r-1}$ & $\mathcal{C}_{2r}$ & $\mathcal{D}_r$ & $\mathbb{Z}/2\mathbb{Z}$\\
\hline
$C_r \ (r \geq 3)$ & $D_{r + 1}$ & $\mathcal{D}_{r - 1}$ & $\mathcal{D}_{2(r - 1)}$& $\mathbb{Z}/2\mathbb{Z}$ \\
\hline
$F_4$ & $E_6$ &  $\mathcal{T}$ & $\mathcal{O}$ & $\mathbb{Z}/2\mathbb{Z}$ \\
\hline
$G_2$  & $D_4$ &  $\mathcal{D}_{2}$ & $\mathcal{O}$ & $\mathfrak{S}_3$\\
\hline
\end{tabular} 
\captionof{table}{}
\end{center}
\end{definition}
\noindent In each row of this table, $\Gamma$ and $\Gamma'$ are finite subgroups of $\mathrm{SU}_2$ such that $X_0$ is a simple singularity of type $\Delta(\Gamma)$ (i.e. $X_0 \cong \cc^2/\Gamma$) and $\Gamma \lhd \Gamma'$. Furthermore there is a natural action of $\Omega=\Gamma'/\Gamma$ on the singularity $X_0$. This action lifts in a unique way to the minimal resolution $\widetilde{X}_0$ of $X_0$. As $\Omega$ fixes the singular point of $X_0$, it will stabilize the exceptional divisor in $\widetilde{X}_0$. Hence we obtain an action of $\Omega$ on the dual diagram $\Delta(\Gamma)$ of the exceptional divisor. It turns out that this action agrees with the automorphism group of the Dynkin diagram.
\newline

The inhomogeneous type associated to the simple singularity $(X_0,\Omega)$ will be referred as $\Delta(\Gamma,\Gamma')$.
\newline

A simple singularity of inhomogeneous type is then a simple homogeneous singularity with a symmetry of the Dynkin diagram. One can notice that the type of $(X_0,\Omega)$ is the same as the type of the folding by $\Omega$ of a root lattice of the same type as $X_0$ (cf. \cite{Cara17} Section 1.2 for details). 
\newline
\begin{remark}\label{A2rabsent}
 The type $(A_{2r},\mathbb{Z}/2\mathbb{Z})$ is the only case that appears in Table 1 but not in Table 2. This is because the action of the symmetry group of the Dynkin diagram of type $A_{2r}$ fails to lift to the exceptional locus of the minimal resolution of the associated simple singularity (cf. \cite{Cara17} Section 1.4.2.2).
\end{remark}
\medskip

The notion of symmetry has been added to the simple singularities, therefore it is necessary to include this symmetry in the definition of the deformations of singularities of type $B_r$, $C_r$, $F_4$ and $G_2$ (\cite{Slo80} Section 2.6) :
\medskip

\begin{definition}
A deformation of a simple singularity $(X_0,\Omega)$ is an $\Omega$-equivariant deformation of the singularity $X_0$ with a trivial action of $\Omega$ on the base space. \\
Set $\pi : X \rightarrow Y$ a deformation of $X_0$. A deformation $\psi : X' \rightarrow Y'$ of $X_0$ is said to be induced from $\pi$ by a morphism $\varphi : Y' \rightarrow Y$ if there exist a morphism $\Phi :X' \rightarrow Y'$ such that the following diagram is Cartesian :
\begin{center}
 \begin{tikzpicture}[>=angle 60]
\node  (1) at ( 0,0)  {$X'$};
\node  (2) at ( 2,0) {$X$};
\node (3) at ( 0,-2) {$Y'$};
\node (4) at ( 2,-2) {$Y$};
\node (5) at (1,0.25) {$\Phi$};
\node (6) at (-0.25,-1) {$\psi$};
\node (7) at (2.25,-1) {$\pi$};
\node (8) at (1,-2.25) {$\varphi$};

\draw [decoration={markings,mark=at position 1 with
    {\arrow[scale=1.2,>=stealth]{>}}},postaction={decorate}] (1)  --  (2);
\draw [decoration={markings,mark=at position 1 with
    {\arrow[scale=1.2,>=stealth]{>}}},postaction={decorate}] (1)  --  (3);
\draw [decoration={markings,mark=at position 1 with
    {\arrow[scale=1.2,>=stealth]{>}}},postaction={decorate}] (3)  --  (4);
    \draw [decoration={markings,mark=at position 1 with
    {\arrow[scale=1.2,>=stealth]{>}}},postaction={decorate}] (2)  --  (4);
\end{tikzpicture}
\end{center}
and
\begin{center}
 \begin{tikzpicture}[>=angle 60]
\node  (1) at ( 0,0)  {$X'$};
\node  (2) at ( 2,0) {$X$};
\node (3) at ( 1,1) {$X_0$};
\node (4) at ( 1,-0.25) {$\Phi$};

\draw [decoration={markings,mark=at position 1 with
    {\arrow[scale=1.2,>=stealth]{>}}},postaction={decorate}] (1)  --  (2);
\draw [right hook->,decoration={markings,mark=at position 1 with
    {\arrow[scale=1.2,>=stealth]{>}}},postaction={decorate}] (3)  --  (2);
\draw [left hook->,decoration={markings,mark=at position 1 with
    {\arrow[scale=1.2,>=stealth]{>}}},postaction={decorate}] (3)  --  (1);
\end{tikzpicture}
\end{center}
commutes. The condition of the first diagram being Cartesian means that $X'$ is isomorphic to the fiber product $X \times_Y Y'$. \\
A semiuniversal deformation $\pi_0 : X \rightarrow Y$  of a simple singularity $(X_0,\Omega)$ is a deformation of $(X,\Omega)$ such that any other deformation $\psi : X' \rightarrow Y'$ of $(X,\Omega)$ is induced from $\pi_0$ by an $\Omega$-equivariant morphism $\varphi : Y' \rightarrow Y$ with a uniquely determined differential $d_{y'} \varphi :T_{y'} Y' \rightarrow T_y Y$.
\end{definition}

Semiuniversal deformations of simple homogeneous and inhomogeneous singularities are unique up to isomorphism, which makes them ideal objects for the studies of their associated singularities.

\subsection{Deformations of homogeneous simple singularities}\label{SlodowyCassens}

In this section we present a construction due to H. Cassens and P. Slodowy of the pullback of a semiuniversal deformation of a simple singularity of homogeneous type. For further details on proofs and references of the statements, we invite the reader to look at \cite{CaSlo98}.
\newline

Let $\Gamma$ be a finite subgroup of $\mathrm{SU}_2$, $R$ its regular representation, $N$ its natural representation as a subgroup of $\mathrm{SU}_2$, $\Delta(\Gamma)$ the associated Dynkin diagram (cf. Section~\ref{subsub : ADE} ), and define $M(\Gamma)= (\mathrm{End}(R) \otimes N)^\Gamma$. Then $M(\Gamma)$ is the representation space of a \textit{McKay quiver} $Q$, i.e. a quiver whose vertices are the vertices of the extended Dynkin diagram $\widetilde{\Delta}(\Gamma)$, with two arrows (one in each direction) for any edge in $\widetilde{\Delta}(\Gamma)$.
\newline

The group $G(\Gamma)=(\prod_{i=0}^{r}\mathrm{GL}_{d_i}(\mathbb{C}))/\mathbb{C}^*$, with $(d_0, \ldots, d_r)$ the dimension vector of $M(\Gamma)$, acts on $M(\Gamma)$ by simultaneous conjugation. 
\newline

Using a formula due to G. Lusztig, one can define a non-degenerate $G(\Gamma)$-invariant symplectic form $\langle.,.\rangle$ on $M(\Gamma)$ that induces a moment map

\begin{center}
$\mu_{CS} :  M(\Gamma) \rightarrow (\mathrm{Lie \ }G(\Gamma))^*  \ \mathlarger{\mathlarger{\subset}} \ \displaystyle \bigoplus_{i=0}^{r}M_{d_i}(\mathbb{C})$.
\end{center}

\noindent We identify $\mathrm{Lie \ }G(\Gamma)$ and its dual $(\mathrm{Lie \ }G(\Gamma))^*$.
\newline

Let $Z$ be the dual of the center of $\mathrm{Lie \ }G(\Gamma)$. Because the moment map is $G(\Gamma)$-equivariant, $G(\Gamma)$ acts on any fiber $\mu_{CS}^{-1}(z)$, $z \in Z$. Hence for any $z \in Z$, the quotient $\mu_{CS}^{-1}(Z)//G(\Gamma)$ is well defined. It is then proved in \cite{CaSlo98} that the morphism

\begin{center}
$\mu_{CS}^{-1}(Z)//G(\Gamma) \longrightarrow Z$
\end{center}

\noindent is the pullback of a semiuniversal deformation of the simple singularity $\mathbb{C}^2/\Gamma$ by the natural morphism $\mathfrak{h} \rightarrow \mathfrak{h}/W$, with $\mathfrak{h}$ a Cartan subalgebra of a Lie algebra of type $\Delta(\Gamma)$ and $W$ the associated Weyl group.

\subsection{Deformations of inhomogeneous simple singularities}\label{Moi}

This section is devoted to the extension conducted in \cite{Cara17} of the construction of Section~\ref{SlodowyCassens} to the inhomogeneous simple singularities of type $B_r$ $(r \geq 2)$, $C_r$ $(r \geq 3)$, $F_4$ and $G_2$.
\newline

Let $\Delta(\Gamma)$ be a Dynkin diagram of type $A_{2r-1}$ ($r\geq 2$), $D_r$ ($r\geq 4$) or $E_6$, with $\Gamma$ being the associated finite subgroup of $\mathrm{SU}_2$ (cf. Section~\ref{subsub : ADE}). Using the previous section, one obtains the following diagram : 

\begin{center}
\begin{tikzpicture}[scale=1,  transform shape,>=angle 60]
\tikzstyle{point}=[circle,draw,fill]
\tikzstyle{ligne}=[thick]

\node (1) at ( 0,0) {$X_\Gamma \times_{\mathfrak{h}/W}\mathfrak{h}$};
\node (2) at ( 2.5,0) {$X_\Gamma$};
\node (3) at ( 2.5, - 2.5)  {$\mathfrak{h}/W$};
\node (4) at ( 0, - 2.5)  {$\mathfrak{h}$};
\node (10) at (- 2.5,0)  {$\mu_{CS}^{-1} (Z)//G(\Gamma)$};
\node (14) at (- 1.05,0)  {$=$};

\node (13) at (  - 0.5, - 2.5)  {$Z \cong$};

\node (5) at ( 1.5,0.2) {$\psi$};
\node (6) at (  - 0.2, - 1.25) {$\widetilde{\alpha}$};
\node (7) at ( 1.15, - 2.7)  {$\pi$};
\node (8) at ( 2.7, - 1.25)  {$\alpha$};
\node (9) at ( 1.25, - 1.25) {$\circlearrowleft$};

\draw  [decoration={markings,mark=at position 1 with
    {\arrow[scale=1.2,>=stealth]{>}}},postaction={decorate}] (1)  --  (2);
\draw  [decoration={markings,mark=at position 1 with
    {\arrow[scale=1.2,>=stealth]{>}}},postaction={decorate}] (2)  --  (3);
\draw  [decoration={markings,mark=at position 1 with
    {\arrow[scale=1.2,>=stealth]{>}}},postaction={decorate}] (1)  --  (4);
\draw  [decoration={markings,mark=at position 1 with
    {\arrow[scale=1.2,>=stealth]{>}}},postaction={decorate}] (4)  --  (3);

\end{tikzpicture}
\end{center}

\noindent with $\alpha$ a semiuniversal deformation of the simple singularity $\cc^2/\Gamma$ of type $\Delta(\Gamma)$, $\mathfrak{h}$ a Cartan subalgebra of type $\Delta(\Gamma)$ and $W$ the associated Weyl group. 
\newline

Let $\Gamma'$ be the finite subgroup of $\mathrm{SU}_2$ such that there exists a simple singularity of inhomogeneous type $\Delta(\Gamma,\Gamma')$ (cf. Definition~\ref{def : inhomogeneous}). Then $\Omega=\Gamma'/\Gamma$ acts on the singularity $X_{\Gamma,0}=\alpha^{-1}(0)$. Let us define $X_{\Gamma, \Omega}=\alpha^{-1}((\mathfrak{h}/W)^{\Omega})$ and $\alpha^\Omega=\alpha_{|X_{\Gamma, \Omega}}$. Our aim is to define natural actions of $\Omega$ on $X_\Gamma$ and $\mathfrak{h}/W$ such that $\alpha$ becomes $\Omega$-equivariant, because if so, using results of P. Slodowy, one can show that the restriction $\alpha^\Omega : X_{\Gamma, \Omega} \rightarrow (\mathfrak{h}/W)^{\Omega}$ is a semiuniversal deformation of an inhomogeneous simple singularity of type $\Delta(\Gamma,\Gamma')$ (cf. \cite{Cara17} Section 4.3.1).
\newline

In \cite{Cara17} it is shown that if the action of $\Omega$ on $M(\Gamma)$ is symplectic, then $\widetilde{\alpha}$ can be made into an $\Omega$-equivariant map, and so is $\alpha$. We then have the following result : 
 
\begin{theorem}\label{thm : compatibility}
Let $M(\Gamma)$ be the representation space of a McKay quiver built on a Dynkin diagram of type $A_{2r-1}$, $D_{r}$ or $E_6$ as explained in Section~\ref{SlodowyCassens}. Then there exists an action of $\Omega=\Gamma'/\Gamma$ on $M(\Gamma)$ that is both symplectic and induces the natural action on the singularity $\cc^2/\Gamma$. This action then turns $\alpha$ into an $\Omega$-equivariant morphism.
\end{theorem}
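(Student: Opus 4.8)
The plan is to make the action of $\Omega = \Gamma'/\Gamma$ on $M(\Gamma)$ explicit, vertex by vertex, starting from the natural action on the exceptional divisor and hence on the extended Dynkin diagram $\widetilde{\Delta}(\Gamma)$. Since $\Omega$ acts by automorphisms of the Dynkin diagram, and since the extended node is fixed by $\Omega$ in each of our cases ($A_{2r-1}$, $D_r$, $E_6$), the permutation of the vertices of $\widetilde{\Delta}(\Gamma)$ preserves the dimension vector $(d_0,\dots,d_r)$. This gives, for each $\omega \in \Omega$, a permutation of the factors of $G(\Gamma) = (\prod_i \mathrm{GL}_{d_i}(\mathbb{C}))/\mathbb{C}^*$ and a corresponding linear map on $M(\Gamma) = (\mathrm{End}(R)\otimes N)^\Gamma$, the latter obtained by permuting the arrow-components of the McKay quiver accordingly (with possible sign adjustments on the reversed arrows, chosen so that orientation data is respected). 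First I would write this candidate action down carefully and check it is well defined on $M(\Gamma)$ and compatible with the $G(\Gamma)$-action, i.e. $\omega(g\cdot m) = (\omega \cdot g)\cdot(\omega\cdot m)$.

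Next I would verify the two required properties separately. For symplecticity, recall that $\langle\cdot,\cdot\rangle$ on $M(\Gamma)$ is built from Lusztig's formula, which is expressed intrinsically in terms of the quiver data (a sum over arrows of traces of commutators, weighted by the orientation). The key point is that this formula is manifestly invariant under any automorphism of $\widetilde{\Delta}(\Gamma)$ that preserves the bipartite orientation structure, up to the sign choices made above; so the verification reduces to checking that the signs can be chosen consistently. For the compatibility with the natural action on $\cc^2/\Gamma$, I would use that $\cc^2/\Gamma$ is recovered as the central fiber $\mu_{CS}^{-1}(0)//G(\Gamma)$, and that the identification of this fiber with $\cc^2/\Gamma$ is itself canonical (coming from the McKay correspondence); tracing through the construction, the $\Omega$-action induced on the fiber must then coincide with the one coming from $\Gamma \lhd \Gamma'$, because both are induced by the same geometric symmetry of the minimal resolution. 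Alternatively, one can check this on generators using the explicit equations $R_\Gamma$ from the table in Section~\ref{subsub : ADE}.

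The final step is to conclude that $\alpha$ becomes $\Omega$-equivariant. Here I would invoke the construction recalled in Section~\ref{SlodowyCassens}: the morphism $\widetilde{\alpha} : \mu_{CS}^{-1}(Z)//G(\Gamma) \to Z$ is the pullback of $\alpha$ along $\mathfrak{h}\to\mathfrak{h}/W$, and $Z$ is the dual of the center of $\mathrm{Lie}\,G(\Gamma)$. Since the $\Omega$-action on $M(\Gamma)$ is symplectic and $G(\Gamma)$-equivariant, it descends to an action on each quotient $\mu_{CS}^{-1}(z)//G(\Gamma)$ and acts on $Z$ by the permutation of central coordinates described above; this makes $\widetilde{\alpha}$ equivariant. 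The identification $Z \cong \mathfrak{h}$ then intertwines the $\Omega$-action on $Z$ with the diagram-automorphism action on $\mathfrak{h}$, which descends to $\mathfrak{h}/W$, so $\alpha$ is $\Omega$-equivariant as claimed.

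The main obstacle I expect is the sign bookkeeping in the symplectic form: Lusztig's pairing involves a choice of orientation of the edges of $\widetilde{\Delta}(\Gamma)$, and a diagram automorphism need not preserve a given orientation, so one must either choose an $\Omega$-stable orientation (not always possible for $A_{2r-1}$, where $\sigma$ reverses the chain) or introduce compensating signs in the definition of the $\Omega$-action on the arrow components and check that these signs are consistent around every cycle of the quiver and commute with the moment map equations. Verifying that a consistent choice exists in each of the cases $A_{2r-1}$, $D_r$, $E_6$ — and in the $G_2$ case also handling the order-$3$ automorphism of $D_4$ — is the technical heart of the argument; everything else is formal once the action is pinned down.
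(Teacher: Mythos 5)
The paper does not actually prove Theorem~\ref{thm : compatibility} here: it is imported wholesale from \cite{Cara17} (Sections 1.4 and 4.3), so there is no in-paper argument to compare yours against line by line. Judged on its own terms, your outline has the right overall shape (define a lift of the diagram automorphism to $M(\Gamma)$, check symplecticity against Lusztig's form, check compatibility with the action on $\cc^2/\Gamma$, then descend through $Z\cong\mathfrak{h}$ to get equivariance of $\alpha$), and your last paragraph correctly identifies the orientation/sign issue as a real technical point. The descent step at the end is also fine and matches the logic the paper itself invokes.

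The genuine gap is in the compatibility step, which is the actual content of the theorem. A diagram automorphism admits many lifts to the representation space $M(\Gamma)$ --- any two differ by an element of $G(\Gamma)$ and by rescalings of the arrows --- and only some of them induce the \emph{natural} action of $\Omega=\Gamma'/\Gamma$ on $\cc^2/\Gamma=\mu_{CS}^{-1}(0)//G(\Gamma)$. Your justification (``both are induced by the same geometric symmetry of the minimal resolution, so they must coincide'') is circular: the theorem is precisely the assertion that such a compatible lift exists, so you cannot assume that whatever permutation-of-arrows action you wrote down is the one coming from the geometry. A symptom of the problem is Remark~\ref{A2rabsent}: for type $A_{2r}$ the diagram automorphism also permutes the vertices of $\widetilde{\Delta}(\Gamma)$ preserving the dimension vector, so your construction would go through verbatim, yet the theorem is false in that case because the symmetry fails to lift. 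The standard fix, and the route taken in \cite{Cara17}, is to build the action representation-theoretically rather than combinatorially: $\Gamma'$ acts on $\mathrm{End}(R)$ by conjugation (via the permutation of the irreducible $\Gamma$-modules induced by conjugation in $\Gamma'$) and on $N=\cc^2$ as a subgroup of $\mathrm{SU}_2$, hence on $\mathrm{End}(R)\otimes N$; one then checks that this descends to an action of $\Omega$ on the $\Gamma$-invariants. With that definition, symplecticity is automatic (Lusztig's form is built from the trace pairing on $\mathrm{End}(R)$ and the $\mathrm{SU}_2$-invariant symplectic form on $N$, both preserved by $\Gamma'$), the compatibility with the action on $\cc^2/\Gamma$ is forced by construction, and the case distinction that excludes $A_{2r}$ appears exactly where it should, in the descent to $\Omega$. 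I would restructure your proof around that construction and keep your final descent argument as is.
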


The previous theorem enables us to explicitly compute semiuniversal deformations for the inhomogeneous types. For details on these computations, see \cite{Cara17} Sections 4.3 and 4.4.

\section{Quotients of semiuniversal deformations of inhomogeneous types}\label{sec : quotientsingu}

In the previous section we have seen that the morphism $\alpha^{\Omega} : X_{\Gamma, \Omega} \rightarrow (\mathfrak{h}/W)^{\Omega}$ is $\Omega$-invariant. Thus $\Omega$ acts on each fiber of $\alpha^{\Omega}$ and the fibers can be quotiented. Furthermore it is known that $(\alpha^{\Omega})^{-1}(\overline{0}) = X_{\Gamma,0}=\cc^2/\Gamma$. Hence the quotient fiber is $(\alpha^{\Omega})^{-1}(\overline{0})/\Omega=X_{\Gamma,0}/\Omega = (\cc^2/\Gamma)/(\Gamma'/\Gamma) \cong \cc^2/\Gamma'$. It is a simple singularity because $\Gamma'$ is a finite subgroup of $\mathrm{SU}_2$. Therefore the family given by the quotient map $\overline{\alpha^{\Omega}} : X_{\Gamma, \Omega} /\Omega \rightarrow (\mathfrak{h}/W)^{\Omega}$ is a deformation of the simple singularity $\cc^2/\Gamma'$ of type $\Delta(\Gamma')$. We will say that the deformation $\overline{\alpha^{\Omega}}$ is obtained through $\Delta(\Gamma)-\Delta(\Gamma,\Gamma')-\Delta(\Gamma')$-procedure.
\newline

The results obtained in \cite{Cara17} Sections 4.3 and 4.4 lead to the following table : 
\newline

\begin{center}
\begin{tabular}{|c|c|c|}
\hline
Type of $\alpha^\Omega$ & Type of $\overline{\alpha^\Omega}$& Rank of $\overline{\alpha^\Omega}$ \\
\hhline{|=|=|=|}
$B_r$ $(r \geq 2)$ & $D_{r+2}$& r \\
\hline
$C_r$ $(r \geq 3)$ & $D_{2r}$& r \\
\hline
$F_4$ & $E_7$& 4 \\
\hline
$G_2$ & $E_7$& 2 \\
\hline
\end{tabular}
\captionof{table}{}
\end{center}
\smallskip
Here we slightly abuse notations by saying that a deformation is of type $\Delta$ if it is a deformation of a simple singularity of type $\Delta$. Furthermore the \textit{rank} of a deformation is the dimension of its base space.
\newline

\begin{remark}
In the case $\Delta(\Gamma, \Gamma')=G_2$, one can replace the symmetry group $\mathfrak{S}_3$ by $\mathbb{Z}/3\mathbb{Z}$. The rank of $\overline{\alpha^\Omega}$ remains $2$, but the type of $\overline{\alpha^\Omega}$ becomes $E_6$.
\end{remark}
\smallskip

\begin{examples}
$\bullet$ For $\Delta(\Gamma, \Gamma')=B_2$, the map $\alpha^\Omega$ is given by the projection
\begin{center}
\begin{tikzpicture}[scale=1]
\node (1) at ( 0,0) {$ X_{\Gamma, \Omega}=\{((x,y,z),(t_2,0,t_4)) \in \cc^3 \times \mathfrak{h}/W \ | \ z^4+t_2z^2+t_4+\frac{t_2^2}{8}=xy\}$};
\node (2) at ( 0,-2) {$(\mathfrak{h}/W)^\Omega=\{ (t_2,0,t_4) \in  \mathfrak{h}/W\}$};
\node (3) at ( 0.5, -1)  {$\alpha^\Omega$};

\draw  [decoration={markings,mark=at position 1 with
    {\arrow[scale=1.2,>=stealth]{>}}},postaction={decorate}] (1)  --  (2);
\end{tikzpicture}
\end{center}
with the action of $\Omega=\mathbb{Z}/2\mathbb{Z}=<\sigma>$ on a fiber $(\alpha^\Omega)^{-1}(t_2,0,t_4)$ being $\sigma.(x,y,z)=(y, x,-z)$. Here $t_i$ denotes the flat coordinate of degree $i$ (cf. \cite{SYS80}). Therefore the quotient $\overline{\alpha^\Omega}$ is given by
\begin{center}
\begin{tikzpicture}[scale=1]
\node (1) at ( 0,0) {$ X_{\Gamma, \Omega}/\Omega=\{((x,y,z),(t_2,0,t_4)) \in \cc^3 \times \mathfrak{h}/W \ | \ Z(X^2-4Z^2)+W^2-4t_2Z^2-4(t_4+\frac{t_2^2}{8})Z=0\}$};
\node (2) at ( 0,-2) {$(\mathfrak{h}/W)^\Omega=\{ (t_2,0,t_4) \in  \mathfrak{h}/W\}$};
\node (3) at ( 0.5, -1)  {$\overline{\alpha^\Omega}$};

\draw  [decoration={markings,mark=at position 1 with
    {\arrow[scale=1.2,>=stealth]{>}}},postaction={decorate}] (1)  --  (2);
\end{tikzpicture}
\end{center}
The map $\overline{\alpha^\Omega}$ is a deformation of a $D_4$-singularity and is of rank 2. \\

\hspace{2.05cm} $\bullet$ For $\Delta(\Gamma, \Gamma')=C_3$, the map $\alpha^\Omega$ is given by the projection
\begin{center}
\begin{tikzpicture}[scale=1]
\node (1) at ( 0,0) {$X_{\Gamma, \Omega}=\{((x,y,z),(t_2,t_4,t_6,0)) \in \cc^3 \times  \mathfrak{h}/W \ | \ z^2 =xy(x+y)-\frac{t_2}{2}xy-\frac{t_4}{4}x+ \frac{1}{4}(t_6 + \frac{t_2t_4}{6} + \frac{t_2^3}{108})\}$};
\node (2) at ( 0,-2) {$(\mathfrak{h}/W)^{\Omega}=\{(t_2,t_4,t_6,0) \in   \mathfrak{h}/W \}$};
\node (3) at ( 0.5, -1)  {$\alpha^\Omega$};

\draw  [decoration={markings,mark=at position 1 with
    {\arrow[scale=1.2,>=stealth]{>}}},postaction={decorate}] (1)  --  (2);
\end{tikzpicture}
\end{center}
with the action of $\Omega=\mathbb{Z}/2\mathbb{Z}=<\sigma>$ on a fiber $(\alpha^\Omega)^{-1}(t_2,t_4,t_6,0)$ being $\sigma.(x,y,z)=(x,-x-y+\frac{t_2}{2},-z)$. The coordinates on $\mathfrak{h}/W$ are once again the flat coordinates. Therefore the quotient $\overline{\alpha^\Omega}$ is given by
\begin{center}
\begin{tikzpicture}[scale=1]
\node (1) at ( 0,0) {$X_{\Gamma, \Omega} /\Omega=\{((X,W,Z),(t_2,t_4,t_6,0)) \in \cc^3 \times \mathfrak{h}/W \ | \ -\frac{1}{64}X^5+XY^2-W^2$};
\node (2) at (0.5,-0.5)  {$+A_{X^4}X^4+A_{X^3}X^3+A_{X^2}X^2+A_XX+A_YY +A_0=0\}$};
\node (3) at ( 0,-2.5) {$(\mathfrak{h}/W)^{\Omega}=\{(t_2,t_4,t_6,0) \in   \mathfrak{h}/W \}$};
\node (4) at ( 0.5, -1.5)  {$\overline{\alpha^\Omega}$};
\node (5) at ( 0, -0.7)  {};

\draw  [decoration={markings,mark=at position 1 with
    {\arrow[scale=1.2,>=stealth]{>}}},postaction={decorate}] (5)  --  (3);
\end{tikzpicture}
\end{center}
where the $A_i$ are polynomials in $t_2,t_4,t_6$ without constant terms. So the map $\overline{\alpha^\Omega}$ is indeed a deformation of a $D_6$-singularity and is of rank 3. \\

For details on the computations, see \cite{Cara17} Section 4.4.
\end{examples}
\smallskip

E. Brieskorn showed (\cite{Bries71}) that a semiuniversal deformation of a simple singularity of type $\Delta_r$ ($\Delta=A$, $D$ or $E$) can be constructed such that the base space is the quotient $\mathfrak{h}/W$ with $\mathfrak{h}$ a Cartan subalgebra of a simple Lie algebra of type $\Delta_r$ and $W$ the associated Weyl group. Hence the rank of a semiuniversal deformation of a simple singularity of type $\Delta_r$ is $r$. However one notices in Table 3 that it is not true for $\overline{\alpha^\Omega}$. Therefore $\overline{\alpha^\Omega}$ is not a semiuniversal deformation in any case.
\newline

We will now prove the following theorem regarding the regularity of the fibers of $\overline{\alpha^\Omega}$ : 
\medskip

\begin{theorem}\label{ThmSingular} Let $\alpha^\Omega  : X_{\Gamma,\Omega} \rightarrow (\mathfrak{h}/W)^\Omega$ be the semiuniversal deformation obtained in Section~\ref{Moi} of a simple singularity of inhomogeneous type $B_r$ ($r \geq 2$), $C_r$ ($r \geq 3$), $F_4$ or $G_2$. Then every fiber of the quotient $\overline{\alpha^\Omega} :  X_{\Gamma,\Omega}/\Omega \rightarrow (\mathfrak{h}/W)^\Omega$ is singular.
\end{theorem}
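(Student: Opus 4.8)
The plan is to exploit the fact that $\overline{\alpha^\Omega}$ is obtained as a quotient by $\Omega$, so that each fiber $(\overline{\alpha^\Omega})^{-1}(\overline{t})$ is the image of $(\alpha^\Omega)^{-1}(\overline{t})$ under the quotient map, and to locate a singular point on it coming from a fixed point of the $\Omega$-action on the corresponding fiber of $\alpha^\Omega$. First I would recall that $\alpha^\Omega$ is itself a semiuniversal deformation (Section~\ref{Moi}), so its generic fibers are smooth; but even on a smooth fiber of $\alpha^\Omega$, the $\Omega$-action can have fixed points, and at such a point the quotient acquires a singularity. So the key claim to establish is: for every $\overline{t} \in (\mathfrak{h}/W)^\Omega$, the action of $\Omega$ on the fiber $F_{\overline{t}} := (\alpha^\Omega)^{-1}(\overline{t})$ has a fixed point, and moreover the quotient $F_{\overline{t}}/\Omega$ is singular there.

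The second step is to verify the fixed-point claim case by case using the explicit models. In the $B_r$, $C_r$, $F_4$ cases $\Omega = \mathbb{Z}/2\mathbb{Z} = \langle \sigma \rangle$ and the fibers are hypersurfaces in $\cc^3$; from the explicit involutions (e.g.\ in the $B_2$ example $\sigma\cdot(x,y,z) = (y,x,-z)$, in the $C_3$ example $\sigma\cdot(x,y,z) = (x,-x-y+\tfrac{t_2}{2},-z)$) one reads off that the fixed locus in $\cc^3$ is a line or a plane, and one checks that this fixed locus meets every fiber $F_{\overline{t}}$ — this is where one substitutes the fixed-point equations into the defining equation of $F_{\overline{t}}$ and observes that the resulting (lower-dimensional) equation always has a solution over $\cc$. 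For $G_2$ one does the same with $\Omega = \mathfrak{S}_3$ (or $\mathbb{Z}/3\mathbb{Z}$), using the fact that a finite group acting on an affine variety over $\cc$ with the variety nonempty and the action having a common fixed subvariety still intersects each fiber. Concretely I expect the cleanest uniform argument to be: the $\Omega$-fixed subscheme $(X_{\Gamma,\Omega})^\Omega \subset X_{\Gamma,\Omega}$ is itself a deformation over $(\mathfrak{h}/W)^\Omega$ whose special fiber is the (nonempty) fixed locus of $\Omega$ on $\cc^2/\Gamma$, and by properness/flatness considerations (or by direct inspection of the equations) this fixed subscheme surjects onto the base.

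The third step is the local analysis at such a fixed point $p \in F_{\overline{t}}$: I want to show the image of $p$ in $F_{\overline{t}}/\Omega$ is a singular point. Since $F_{\overline{t}}$ is a surface and $\Omega$ acts on it fixing $p$, the induced action on the tangent space $T_pF_{\overline{t}}$ (a $2$-dimensional space if $F_{\overline{t}}$ is smooth at $p$, or on the Zariski tangent space otherwise) is linear. If $F_{\overline{t}}$ is singular at $p$ we are already done. If $F_{\overline{t}}$ is smooth at $p$, then $F_{\overline{t}}/\Omega$ near the image of $p$ looks like $\cc^2/(\text{image of }\Omega\text{ in }\mathrm{GL}_2)$, and one checks from the explicit eigenvalues of $d_p\sigma$ (resp.\ the $\mathfrak{S}_3$- or $\mathbb{Z}/3\mathbb{Z}$-action) that this image is a nontrivial finite subgroup — e.g.\ in the $B_2$ model $d_p\sigma$ acts on the $z$-direction and on the $x{-}y$ transverse direction with a $-1$, giving a genuine quotient singularity — hence the quotient is not smooth at that point. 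Thus $F_{\overline{t}}/\Omega$ is singular, and since $\overline{t}$ was arbitrary, every fiber of $\overline{\alpha^\Omega}$ is singular.

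The main obstacle I anticipate is making the "fixed locus surjects onto the base" claim clean and uniform rather than a four-case brute-force substitution; in particular the $G_2$ case with $\mathfrak{S}_3$ is the most delicate, since one must ensure the fixed locus is nonempty over every point and not merely over $\overline{0}$. A secondary technical point is handling the degenerate possibility that at the fixed point the induced linear action of $\Omega$ on the (possibly $3$-dimensional) Zariski tangent space happens to be trivial in the directions tangent to $F_{\overline{t}}$ — but this cannot happen here because in each explicit model $\sigma$ (resp.\ the order-$3$ generator) negates (resp.\ rotates) a coordinate that is genuinely transverse on $F_{\overline{t}}$, so the local quotient is a bona fide nontrivial cyclic (or $\mathfrak{S}_3$) quotient singularity.
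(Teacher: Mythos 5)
Your overall strategy (find a point of the fiber of $\alpha^\Omega$ with nontrivial $\Omega$-stabilizer, then show the quotient is singular at its image) is the same one the paper uses for the types $B_r$ and $C_r$, and your local analysis at a smooth isolated fixed point is essentially the paper's. However, there are two genuine gaps. First, your ``key claim'' --- that for every $\overline{t}$ the $\Omega$-action on $F_{\overline{t}}=(\alpha^\Omega)^{-1}(\overline{t})$ has a fixed point --- is false in the case $G_2=(D_4,\mathfrak{S}_3)$: the condition for a common fixed point of $\rho$ and $\sigma$ is $x=y=t_2/6$, $z=0$, which forces $t_6=t_2^3/108$, so the generic fiber has \emph{no} $\Omega$-fixed point (the paper records exactly this in Table~\ref{TypesofsingularD4andS3}: ``smooth and no $\Omega$-fixed point''), even though its quotient is singular. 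To save the argument you must work with points whose stabilizer is a nontrivial \emph{proper} subgroup of $\mathfrak{S}_3$ (such points do exist on every fiber), and justify that a nontrivial cyclic stabilizer acting symplectically on the tangent plane still produces a quotient singularity; alternatively one computes the quotient equation directly, which is what the paper does for $F_4$ and both $G_2$ cases.

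Second, for general $C_r$ your proposal presupposes that the involution on an arbitrary fiber is explicitly known (``one reads off that the fixed locus in $\cc^3$ is a line or a plane''), but a priori the action on the fiber over $(t_2,\dots,t_{2r},0)$ could be of the form $\sigma.y=-y+q(x,\dots,t_{2r})$, $\sigma.z=-z+s(x,\dots,t_{2r})$ with parameter-dependent correction terms; ruling these out is the bulk of the paper's proof (a quasi-homogeneity and parity analysis, followed by an argument using the symmetry $(x,y,z)\mapsto(x,y,-z)$ of the fiber equation to force the corrections to vanish). Your proposed uniform substitute --- that the fixed subscheme $(X_{\Gamma,\Omega})^{\Omega}$ surjects onto the base ``by properness/flatness considerations'' --- does not work as stated: the morphism is affine, so properness is unavailable, and flatness or surjectivity of the fixed subscheme over the base is not automatic; it is essentially equivalent to the claim being proved. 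Without either the explicit form of the action for all $r$ or a genuinely new argument, the $C_r$ case remains open in your write-up.
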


\begin{proof} In the following computations, the $t_i$'s will always be the flat coordinates on $\mathfrak{h}/W$ (cf. \cite{SYS80}), with $i$ being the degree. For $F_4$ and $G_2$, we will look at the explicit equation obtained in \cite{Cara17} Section 4.4 and see that each fiber of $\overline{\alpha^\Omega}$ is indeed singular. For the general types $B_r$ and $C_r$, another approach is needed. We will show that each fiber of $\alpha^\Omega$ has at least one isolated fixed point $p$. If a fiber has such a point $p$, then there are two possibilities :  \\

- $p$ is singular, so a neighbourhood of $p$ is isomorphic to $\cc^2/\Xi$ with $\Xi$ a finite subgroup of $\mathrm{SU}_2$, because $p$ is a singular point in a semiuniversal deformation of a simple singularity. Thus we have $\mathrm{dim } \ T_p(\cc^2/\Xi)>\mathrm{dim } \ \cc^2/\Xi$. 
 Furthermore $T_{\pi(p)}((\cc^2/\Xi)/\Omega) \cong T_p(\cc^2/\Xi)/T_p(\Omega.p)$ with $\pi$ the quotient map. As $p$ is a fixed point, we obtain $\mathrm{dim} \ T_{\pi(p)}((\cc^2/\Xi)/\Omega) = \mathrm{dim} \ T_p(\cc^2/\Xi) > \mathrm{dim} \ \cc^2/\Xi = \mathrm{dim} \ (\cc^2/\Xi)/\Omega$ because $\Omega$ is a finite group. Hence $\mathrm{dim }\ T_{\pi(p)}((\cc^2/\Xi)/\Omega)> \mathrm{dim }\ (\cc^2/\Xi)/\Omega$, and $\pi(p)$ is a singular point of $(\cc^2/\Xi)/\Omega$. \\

- $p$ is smooth, then the fiber has a tangent space at this point and it is isomorphic to $\cc^2$. So locally around $p$, the action of $\Omega=\mathbb{Z}/2\mathbb{Z}$ is the same as its action on $\cc^2$ around the origin. Hence the quotient will be  isomorphic to $\cc^2/\Omega$, which is an $A_1$-singularity. The quotient will then be singular.
\newline

$\bullet$ Type $B_r$ ($r \geq 2$). The fiber of $\alpha^\Omega$ above a point $(t_2,0,t_4,0,...,t_{2r}) \in (\mathfrak{h}/W)^\Omega$ is given by the points $(x,y,z) \in \cc^3$ such that $z^{2r}+\sum_{i=1}^r f_{2i}(t_2,...,t_{2r})z^{2(r-i)}=xy$ (cf. \cite{Cara17} Section 4.3.6). The $f_i$'s are polynomials such that $f_i(t_2(\xi),...,t_{2r}(\xi))=\epsilon_i(\xi)$ for all $1 \leq i \leq 2r$, $\epsilon_i$ being the elementary symmetric polynomial of degree $i$ and $\xi$ being a coordinate vector on the Cartan subalgebra $\mathfrak{h}$ of type $A_{2r-1}$. The action of $\Omega=\mathbb{Z}/2\mathbb{Z}=<\sigma>$ on the fiber is given by
\begin{center}
 $\sigma.(x,y,z)=((-1)^ry,(-1)^rx,-z)$.
 \end{center}
 Hence a fixed point is a point $p=(x,(-1)^rx, 0)$ such that $x^2=(-1)^r f_2(t_2,...,t_{2r})$. As $f_2(t_2,...,t_{2r})=t_2$, we define $x=(-1)^{\frac{r}{2}}\sqrt{t_2}$. Then $p=(x,(-1)^rx, 0)$ is a fixed point of the fiber. It is isolated because there are at most two possible values for $x$.
 \newline
 
 $\bullet$ Type $C_r$ ($r \geq 3$). Based on results from A. Kas and M. Schlessinger (\cite{KasSchle72}), as well as computations from \cite{Cara17}, the fiber of the restriction $\alpha^\Omega$ of the semiuniversal deformation $\alpha$ of a singularity of type $D_{r+1}$ above $(t_2, t_4,...,t_{2r},0) \in (\mathfrak{h}/W)^\Omega$ has the following equation :
  \begin{center}
 $x^r+xy^2+z^2+\alpha_{r-1}x^{r-1}+...+\alpha_{1}x+\beta y+\alpha_0=0$,
 \end{center}
 with $\beta$ and the $\alpha_i$'s being complex polynomials in variables $t_2, t_4,...,t_{2r}$. Here the system of flat coordinates on $\mathfrak{h}/W$ is given by $(t_2, t_4,...,t_{2r},\psi)$, $\Omega$ fixes each $t_i$ and sends $\psi$ on $-\psi$. The preceding equation is a quasi-homogenous polynomial with
 \begin{center}
$\left\lbrace \begin{array}[h]{l}
x \text{ of degree  }2, \\
y \text{ of degree  } r-1, \\
z \text{ of degree  } r, \\
t_i \text{ of degree  } i, \ i=2,4,...,2r.
\end{array}\right.$
\end{center}
From the study of the special fiber  $(\alpha^\Omega)^{-1}(\overline{0})$ and the fact that the action of $\Omega=\mathbb{Z}/2\mathbb{Z}=<\sigma>$ preserves the degree, one can see that 
\begin{center}
$\left\lbrace\begin{tabular}{l}
$\sigma.x=x+p(x,y,z,t_2,...,t_{2r}) $, of degree 2,\\
$\sigma.y=-y+q(x,y,z,t_2,...,t_{2r}) $, of degree r-1,\\
$\sigma.z=-z+s(x,y,z,t_2,...,t_{2r}) $, of degree r,\\
$\sigma.t_i=t_i $, $i=2,4,...,2r$,
\end{tabular}\right.$
\end{center}
where $p, q$ and $s$ are homogeneous polynomials with respect to the degrees of $x,y,z$ and the $t_i$'s, and such that $p(x,y,z,0,...,0)=q(x,y,z,0,...,0)=s(x,y,z,0,...,0)=0$ for any $(x,y,z) \in \cc^3$. \\

Let us fix $(t_2,...,t_{2r},0) \in (\mathfrak{h}/W)^\Omega$ and study the fiber $(\alpha^\Omega)^{-1}(t_2,...,t_{2r},0)$ given by the equation given above. \\

Because $x$ is of smallest possible degree and $\sigma^2=\text{Id}$, the only possibility is $\sigma.x=x$. For the other variables, the action might depend on the parity of $r$. \\

\hspace*{5mm} $\blacksquare$ Assume that $r$ is even. $y$ is of degree $r-1$ which is odd, and all the others terms are of even degree. The only possibility is thus $\sigma.y=-y$. \\
$\beta$ is a polynomial of degree $2r-\mathrm{deg} \ y=r+1$, which is odd. However $\beta$ is a polynomial in the variables $t_2,...,t_{2r}$, all of even degree. Hence $\beta=0$.\\
$z$ is of degree $r$ which is even, so $\sigma.z=-z+\sum_{i=0}^{\frac{r}{2}}\lambda_ix^i$ with the $\lambda_i$'s being polynomials in $t_2,...,t_{2r}$. One can check that $\sigma^2z=z$.\\
We are then looking for the fixed points of the action
\begin{center}
$\left\lbrace \begin{tabular}{l}
$\sigma.x=x$,\\
$\sigma.y=-y$,\\
$\sigma.z=-z+\sum_{i=0}^{\frac{r}{2}}\lambda_ix^i$.
\end{tabular}\right.$
\end{center}
If $(x,y,z) \in (\alpha^\Omega)^{-1}(t_2,...,t_{2r},0)$, then $\sigma.(x,y,z)=(x,-y,-z+\sum_{i=0}^{\frac{r}{2}}\lambda_ix^i) \in (\alpha^\Omega)^{-1}(t_2,...,t_{2r},0)$. Therefore the following equation has to be verified : 
 \begin{center}
 \begin{tabular}{cl}
 &  $x^r+xy^2+(-z+\sum_{i=0}^{\frac{r}{2}}\lambda_ix^i)^2+\alpha_{r-1}x^{r-1}+...+\alpha_{1}x+\alpha_0=0$, \\
$\Leftrightarrow$ & $x^r+xy^2+z^2+\alpha_{r-1}x^{r-1}+...+\alpha_{1}x+\alpha_0+(\sum_{i=0}^{\frac{r}{2}}\lambda_ix^i)(-2z+\sum_{i=0}^{\frac{r}{2}}\lambda_ix^i)=0$, \\
$\Leftrightarrow$ & $(\sum_{i=0}^{\frac{r}{2}}\lambda_ix^i)(-2z+\sum_{i=0}^{\frac{r}{2}}\lambda_ix^i)=0$
\end{tabular}
 \end{center}
because $(x,y,z) \in (\alpha^\Omega)^{-1}(t_2,...,t_{2r},0)$. \\
If $\lambda_i=0$ for all $i \in \{0,...,\frac{r}{2}\}$, then the action is   \raisebox{-.55\height}{$\left\lbrace \begin{tabular}{l}
$\sigma.x=x$,\\
$\sigma.y=-y$,\\
$\sigma.z=-z$.
\end{tabular}\right.$} \\
 If there exists $i \in \{0,...,\frac{r}{2}\}$ such that $\lambda_i \neq 0$, then the equation $\sum_{i=0}^{\frac{r}{2}}\lambda_ix^i=0$ is non-trivial in $x$ and has therefore at least one solution (and at most $\frac{r}{2}$). Given that in $(\alpha^\Omega)^{-1}(t_2,...,t_{2r},0)$ the variable $x$ is not constant, there exists $x$ such that $\sum_{i=0}^{\frac{r}{2}}\lambda_ix^i \neq 0$ and such that there exist $y, z \in \cc$ with $(x,y,z) \in (\alpha^\Omega)^{-1}(t_2,...,t_{2r},0)$. Hence we get that $z=\frac{1}{2}\sum_{i=0}^{\frac{r}{2}}\lambda_ix^i \neq 0$. So for such an $x$, the value of $z$ is uniquely determined and is not zero. However, based on the equation of $(\alpha^\Omega)^{-1}(t_2,...,t_{2r},0)$, we see that if $(x,y,z)$ is a point on the surface, then so is $(x,y,-z)$. If a non-zero value of $z$ exists, a second value exists also. This contradicts the unicity of the value of $z$. \\
 
We have shown that the action of $\Omega$ on $(\alpha^\Omega)^{-1}(t_2,...,t_{2r},0)$ is given by 
\begin{center}
$\left\lbrace \begin{tabular}{l}
$\sigma.x=x$,\\
$\sigma.y=-y$,\\
$\sigma.z=-z$.
\end{tabular}\right.$
\end{center}
A fixed point is thus given by $(x,0,0)$ with $x$ verifying the non-trivial equation $x^r+\alpha_{r-1}x^{r-1}+...+\alpha_{1}x+\alpha_0=0$. So a fixed point exists, there are at most $r$ of them, and they are all isolated. \\
\medskip

\hspace*{5mm} $\blacksquare$ Assume that $r$ is odd. $z$ is of degree $r$ which is odd, and all the others terms are of even degree. The only possibility is thus $\sigma.z=-z$. \\
$\beta$ is a polynomial of degree $2r-\mathrm{deg} \ y=r+1$, which is even. So it is possible that $\beta \neq 0$.\\
$y$ is of degree $r-1$ which is even, so $\sigma.y=-y+\sum_{i=0}^{\frac{r-1}{2}}\mu_ix^i$ with the $\mu_i$'s being polynomials in $t_2,...,t_{2r}$. One can check that $\sigma^2y=y$.\\
We are then looking for the fixed points of the action
\begin{center}
$\left\lbrace \begin{tabular}{l}
$\sigma.x=x$,\\
$\sigma.y=-y+\sum_{i=0}^{\frac{r-1}{2}}\mu_ix^i$,\\
$\sigma.z=-z$.
\end{tabular}\right.$
\end{center}
If $(x,y,z) \in (\alpha^\Omega)^{-1}(t_2,...,t_{2r},0)$, then $\sigma.(x,y,z)=(x,-y+\sum_{i=0}^{\frac{r-1}{2}}\mu_ix^i,-z) \in (\alpha^\Omega)^{-1}(t_2,...,t_{2r},0)$. Therefore the following equation has to be verified : 
 \begin{center}
 \begin{tabular}{cl}
 &  $x^r+x(-y+\sum_{i=0}^{\frac{r-1}{2}}\mu_ix^i)^2+z^2+\alpha_{r-1}x^{r-1}+...+\alpha_{1}x+\beta(-y+\sum_{i=0}^{\frac{r-1}{2}}\mu_ix^i)+\alpha_0=0$, \\
$\Leftrightarrow$ & $x^r+xy^2+z^2+\alpha_{r-1}x^{r-1}+...+\alpha_{1}x+\beta y+\alpha_0+(\sum_{i=0}^{\frac{r-1}{2}}\mu_ix^i)(\beta-2xy+x\sum_{i=0}^{\frac{r-1}{2}}\mu_ix^i)=2\beta y$, \\
$\Leftrightarrow$ & $(\sum_{i=0}^{\frac{r-1}{2}}\mu_ix^i)(\beta-2xy+x\sum_{i=0}^{\frac{r-1}{2}}\mu_ix^i)=2\beta y$
\end{tabular}
 \end{center}
because $(x,y,z) \in (\alpha^\Omega)^{-1}(t_2,...,t_{2r},0)$. \\

\noindent If $\mu_i=0$ for all $i \in \{0,...,\frac{r-1}{2}\}$, then $2\beta y=0$ for all $y$ such that $(x,y,z) \in  (\alpha^\Omega)^{-1}(t_2,...,t_{2r},0)$. But there exists at least one point in the surface for which $y \neq 0$. Hence $\beta=0$, and the action is given by \begin{center} \raisebox{-.55\height}{$\left\lbrace \begin{tabular}{l}
$\sigma.x=x$,\\
$\sigma.y=-y$,\\
$\sigma.z=-z$.
\end{tabular}\right.$} \end{center}

\noindent If there exists $i \in \{0,...,\frac{r-1}{2}\}$ such that $\mu_i \neq 0$, we have to consider two cases :  $\beta=0$ and $\beta \neq 0$. \\
$\blacktriangle$ Assume that $\beta=0$. Then $x(\sum_{i=0}^{\frac{r-1}{2}}\mu_ix^i)(-2y+\sum_{i=0}^{\frac{r-1}{2}}\mu_ix^i)=0$. The equation $x(\sum_{i=0}^{\frac{r-1}{2}}\mu_ix^i)=0$ is non-trivial in $x$ and has therefore at least one solution (and at most $\frac{r+1}{2}$). Given that in $(\alpha^\Omega)^{-1}(t_2,...,t_{2r},0)$ the variable $x$ is not constant, there exists $x$ such that $x(\sum_{i=0}^{\frac{r-1}{2}}\mu_ix^i) \neq 0$ and such that there exist $y,z \in \cc$ with $(x,y,z) \in (\alpha^\Omega)^{-1}(t_2,...,t_{2r},0)$. Hence we get that $y=\frac{1}{2}\sum_{i=0}^{\frac{r-1}{2}}\mu_ix^i \neq 0$. So for such an $x$, the value of $y$ is uniquely determined and is not zero. However, based on the equation of $(\alpha^\Omega)^{-1}(t_2,...,t_{2r},0)$, we see that if $(x,y,z)$ is a point on the surface, then so is $(x,-y,z)$ (because $\beta=0$). If a non-zero value of $y$ exists, a second value exists also. This contradicts the unicity of the value of $y$. \\
$\blacktriangle$ Assume that $\beta \neq 0$. We have 
 \begin{center}
 \begin{tabular}{cl}
 &  $(\sum_{i=0}^{\frac{r-1}{2}}\mu_ix^i)(\beta-2xy+x\sum_{i=0}^{\frac{r-1}{2}}\mu_ix^i)=2\beta y$, \\
$\Leftrightarrow$ & $(\beta +x\sum_{i=0}^{\frac{r-1}{2}}\mu_ix^i)(-2y+\sum_{i=0}^{\frac{r-1}{2}}\mu_ix^i)=0$.
\end{tabular}
 \end{center}
As $\beta \neq 0$, for any $x \in \cc$, there exist an infinity of $(y, z) \in \cc^2$ such that $(x,y,z) \in (\alpha^\Omega)^{-1}(t_2,...,t_{2r},0)$. Set $x \in \cc$ such that $x(\beta +x\sum_{i=0}^{\frac{r-1}{2}}\mu_ix^i) \neq 0$ (we get rid of a finite number of possible values). Set $y \in \cc$ such that $2xy+\beta \neq 0$ (we get rid of one possible value). Because of the equation of the surface, there exists $z \in \cc$ such that $(x,y,z) \in (\alpha^\Omega)^{-1}(t_2,...,t_{2r},0)$. One can then check that the point $(x,y-\frac{\beta+2xy}{x},z)$ is well defined and belongs to the surface. But because of the choice of $x$, we know that $y=\frac{1}{2}\sum_{i=0}^{\frac{r-1}{2}}\mu_ix^i$. So for such an $x$, the value of $y$ is uniquely determined. However, we have just seen that $(x,y,z)$ and $(x,y-\frac{\beta+2xy}{x},z)$ are two distinct points of the surface. This contradicts the unicity of the value of $y$. \\
 
We have shown that the action of $\Omega$ on $(\alpha^\Omega)^{-1}(t_2,...,t_{2r},0)$ is given by 
\begin{center}
$\left\lbrace \begin{tabular}{l}
$\sigma.x=x$,\\
$\sigma.y=-y$,\\
$\sigma.z=-z$.
\end{tabular}\right.$
\end{center}
A fixed point is thus given by $(x,0,0)$ with $x$ verifying the non-trivial equation $x^r+\alpha_{r-1}x^{r-1}+...+\alpha_{1}x+\alpha_0=0$. Therefore a fixed point exists, there are at most $r$ of them, and they are all isolated. \\

We have thus proved that, for any $r \geq 3$ and any $(t_2,...,t_{2r},0) \in (\mathfrak{h}/W)^\Omega$, the equation of $(\alpha^\Omega)^{-1}(t_2,...,t_{2r},0)$ is 
\begin{center}
$x^r+xy^2+z^2+\alpha_{r-1}x^{r-1}+...+\alpha_{1}x+\alpha_0=0$,
\end{center}
 the action of $\Omega$ is given by \begin{center} \raisebox{-.55\height}{$\left\lbrace \begin{tabular}{l}
$\sigma.x=x$,\\
$\sigma.y=-y$,\\
$\sigma.z=-z$,
\end{tabular}\right.$} \end{center}
and there is at least one isolated fixed point.
\newline

  $\bullet$ Type $F_4$.  The equation of the fiber of $\alpha^\Omega$ above a point $(t_2,0,t_6,t_8,0,t_{12}) \in (\mathfrak{h}/W)^\Omega$ is given by the points $(x,y,z) \in \cc^3$ such that 
 \begin{center}
 $-\frac{1}{4}x^4 +y^3 +z^2 -\frac{t_2}{4}x^2y+\frac{1}{48}(t_6-\frac{t_2^3}{8})x^2+\frac{1}{48}(-t_8+\frac{t_6t_2}{4}-\frac{t_2^4}{192})y +\frac{1}{576}(t_{12}-\frac{t_8t_2^2}{8}-\frac{t_6^2}{8}+\frac{t_6t_2^3}{96})=0$.
 \end{center}
The action of $\Omega=\mathbb{Z}/2\mathbb{Z}=<\sigma>$ on the fiber is then given by
\begin{center}
 $\sigma.(x,y,z)=(-x,y,-z)$.
 \end{center}
We can compute the equation of the quotient of the fiber and obtain
\begin{center}
$-\frac{1}{4}X^3+XY^3+Z^2-\frac{t_2}{4}X^2Y+\frac{1}{48}(t_6-\frac{t_2^3}{8})X^2+\frac{1}{48}(-t_8+\frac{t_6t_2}{4}-\frac{t_2^4}{192})XY+\frac{1}{576}(t_{12}-\frac{t_8t_2^2}{8}-\frac{t_6^2}{8}+\frac{t_6t_2^3}{96})X=0$.
\end{center}
One can check that for any  $(t_2,0,t_6,t_8,0,t_{12}) \in (\mathfrak{h}/W)^\Omega$, the surface just obtained is indeed singular.
\newline

  $\bullet$ Type $G_2=(D_4,\mathbb{Z}/3\mathbb{Z})$. The equation of the fiber of $\alpha^\Omega$ above a point $(t_2,0,t_6,0) \in (\mathfrak{h}/W)^\Omega$ is given by the points $(x,y,z) \in \cc^3$ such that
  \begin{center}
  $z^2=xy(x+y)-\frac{t_2}{2}xy+\frac{1}{4}(t_6+\frac{t_2^3}{108})$.
  \end{center}
The action of $\Omega=\mathbb{Z}/3\mathbb{Z}=<\rho>$ on the fiber is thus given by
\begin{center}
 $\rho.(x,y,z)=(y,-x-y+\frac{t_2}{2},z)$.
 \end{center}
We can compute the equation of the quotient of the fiber and obtain
\begin{center}
$11664X^4-Y^3-Z^2-324t_2X^2Y-(189t_2^3+5832t_6)X^2+(81t_2t_6+\frac{15}{16}t_2^4)Y+\frac{11}{32}t_2^6+\frac{189}{4}t_2^3t_6+729t_6^2=0$.
\end{center}
One can easily check that for any $(t_2,0,t_6,0) \in (\mathfrak{h}/W)^\Omega$, the surface defined by the previous equation has at least one singular point.
  \newline
  
   $\bullet$ Type $G_2=(D_4,\mathfrak{S}_3)$. The fixed point of $\Omega=\mathfrak{S}_3=<\rho,\sigma>$ in $\mathfrak{h}/W$ are the same as the ones by the action of $\mathbb{Z}/3\mathbb{Z}$ :  $\{(t_2,0,t_6,0) \in \mathfrak{h}/W\}$. The equation of the fiber of $\alpha^\Omega$ above a fixed point in $\mathfrak{h}/W$ is then the same as in the case of $(D_4,\mathbb{Z}/3\mathbb{Z}$). The action of $\Omega$ on such a fiber is given by
\begin{center}
 $\rho.(x,y,z)=(y,-x-y+\frac{t_2}{2},z)$ and $\sigma.(x,y,z)=(x,-x-y+\frac{t_2}{2},-z)$.
 \end{center}
The quotient of the fiber has then the equation
 \begin{center}
\resizebox{.99 \textwidth}{!} 
{$X^3Y -11664Y^3+Z^2+324t_2XY^2+(189t_2^3+5832t_6)Y^2-(\frac{15}{16}t_2^4+81t_2t_6)XY-(\frac{11}{32}t_2^6+\frac{189}{4}t_2^3t_6+729t_6^2)Y=0$.}
\end{center}
Like before, for any $(t_2,0,t_6,0) \in (\mathfrak{h}/W)^\Omega$, the surface has at least one singular point.

\end{proof}

\section{Singular configurations and root systems}\label{sec : singuconf}

Let $\mathfrak{g}$ be a simple Lie algebra of type $A_r$, $D_r$ or $E_r$, $\mathfrak{h}$ a Cartan subalgebra of $\mathfrak{g}$, $W$ the associated Weyl group, and $e \in \mathfrak{g}$ a nilpotent element. As $e$ is nilpotent, it can be included in an $\mathfrak{sl}_2$-triple $(e,f,h)$ by Jacobson-Morozov's theorem. Let us look at the restriction of the adjoint quotient $\chi :  \mathfrak{g} \rightarrow \mathfrak{h}/W$ to the Slodowy slice $S_e=e+\mathfrak{z}_\mathfrak{g}(f)$. Consider the diagram
\begin{center}
\begin{tikzpicture}[scale=1,  transform shape]
\node (3) at ( 0,2) {$S_{e}$};
\node (4) at ( 0.5,1) {$\chi_{\big| S_{e}}$};
\node (5) at ( 0,0) {$\mathfrak{h}/W$};
\node (6) at (0,-1) {$ \mathbb{D}$};
\node (7) at ( -1.5,0.3) {$\pi$};
\node (8) at ( -3,0) {$\mathfrak{h}$};
\node (9) at ( -3,-0.5) {$\bigcup$};
\node (10) at ( -3,-1) { $\displaystyle \bigcup_{\alpha \in \Phi^ + }H_\alpha$};

\draw  [decoration={markings,mark=at position 1 with
    {\arrow[scale=1.2,>=stealth]{>}}},postaction={decorate}] (3)  --  (5);
\draw  [decoration={markings,mark=at position 1 with
    {\arrow[scale=1.2,>=stealth]{>}}},postaction={decorate}] (8)  --  (5);
\draw  [|->,decoration={markings,mark=at position 1 with
    {\arrow[scale=1.2,>=stealth]{>}}},postaction={decorate}] (10)  --  (6);
\draw  [decoration={markings,mark=at position 1 with
    {\arrow[scale=1.2,>=stealth]{>}}},postaction={decorate}] (8)  --  (5);
\end{tikzpicture}
\end{center}
with $\pi$ the natural projection, $\Phi^+$ the set of positive roots of $\mathfrak{g}$, the $H_\alpha$'s the reflection hyperplanes with respect to the roots $\alpha \in \Phi^+$, and $\mathbb{D}$ the discriminant of $\chi$. E. Brieskorn proved in \cite{Bries71} that if $e$ is a subregular (i.e. $\mathrm{dim} \ Z_{G}(e)=\mathrm{rank} \ \mathfrak{g}+2$, with $G$ the simple Lie group associated to $\mathfrak{g}$) nilpotent element of $\mathfrak{g}$, then $ \chi_{| S_{e}}$ is a semiuniversal deformation of a simple singularity of the same type as $\mathfrak{g}$, and is surjective (\cite{Slo80} Section 7.4, Corollary 1). Furthermore, it is shown in \cite{Slo80} (Section 6.5, Lemmas 1, 2, 3) that the type of the singular configuration which appears in $S_e$ above a point $\pi(h) \in \mathbb{D}$ is given by the sub-root system $\{\alpha \in \Phi \ | \ h \in H_\alpha\}$. \\

Let $\mathfrak{g}'$ be a simple Lie algebra of type $\Delta(\Gamma')$ with root system $\Phi'$, $\mathfrak{h}'  \ \mathlarger{\mathlarger{\subset}} \  \mathfrak{g}'$ a Cartan subalgebra and $W'$ the associated Weyl group. Set $e' \in \mathfrak{g}'$ a subregular nilpotent element and $S_{e'}$ the associated Slodowy slice. Hence the restriction $\chi'_{| S_{e'}} :  S_{e'} \rightarrow \mathfrak{h}'/W'$ of the adjoint quotient $\chi'$ of $\mathfrak{g}'$ to $S_{e'}$ is a semiuniversal deformation of a simple singularity of type $\Delta(\Gamma')$. We have seen before that $\overline{\alpha^\Omega} : X_{\Gamma,\Omega}\rightarrow (\mathfrak{h}/W)^\Omega$ is a non-semiuniversal deformation of a simple singularity $X_0$ of type $\Delta(\Gamma')$. Therefore $\overline{\alpha^\Omega}$ is isomorphic to the pullback $\chi'_f$ of $\chi'_{| S_{e'}}$ by a base change $f : (\mathfrak{h}/W)^\Omega \rightarrow \mathfrak{h}'/W'$ whose differential at $0$ is uniquely determined (cf. \cite{Slo80(2)} Section 1.4). We thus have the following diagram : 

\begin{center}
\begin{tikzpicture}[scale=1,  transform shape]
\tikzstyle{point}=[circle,draw,fill]
\tikzstyle{ligne}=[thick]

\node (1) at ( 1.05,3) {$\mathfrak{g}'$};
\node (2) at ( 1,2.5) {$\bigcup$};
\node (3) at ( 1,2) {$(S_{e'},e')$};
\node (4) at ( 1.5,1) {$\chi'_{\big| S_{e'}}$};
\node (5) at ( 1,0) {$(\mathfrak{h}'/W',\overline{0})$};
\node (6) at (1,-1) {$ \mathbb{D}'$};
\node (7) at ( -0.7,0.3) {$\pi'$};
\node (8) at ( -2,0) {$\mathfrak{h}'$};
\node (9) at ( -2,-0.5) {$\bigcup$};
\node (10) at ( -2,-1) {$ \bigcup_{\alpha \in \Phi'^ + }H_\alpha$};
\node (11) at ( 6,0) {$((\mathfrak{h}/W)^\Omega,\overline{0})$};
\node (12) at ( 6.5,1) {$\chi'_f$};
\node (13) at ( 6,2) {$(S_{e'},e') \times_{(\mathfrak{h}'/W',\overline{0})} ((\mathfrak{h}/W)^\Omega,\overline{0})$};
\node (14) at ( 3.5,-0.3) {$f$};
\node (15) at ( 2.8,2.2) {$\varphi$};
\node (16) at ( 12,0) {$((\mathfrak{h}/W)^\Omega,\overline{0})$};
\node (17) at ( 12,2) {$X_{\Gamma,\Omega}/\Omega$};
\node (18) at (9.5,3) {$(X_0,0)$};
\node (19) at ( 12.5,1) {$\overline{\alpha^\Omega}$};
\node (20) at ( 3.5,1) {$\circlearrowleft$};
\node (21) at ( 9,1) {$\circlearrowleft$};
\node (22) at ( 10,2.2) {$\cong$};
\node (23) at ( 9.25,-0.2) {$=$};
\node (24) at ( 9.5,2.5) {$\circlearrowleft$};

\draw  [decoration={markings,mark=at position 1 with
    {\arrow[scale=1.2,>=stealth]{>}}},postaction={decorate}] (3)  --  (5);
\draw  [decoration={markings,mark=at position 1 with
    {\arrow[scale=1.2,>=stealth]{>}}},postaction={decorate}] (8)  --  (5);
\draw  [|-,decoration={markings,mark=at position 1 with
    {\arrow[scale=1.2,>=stealth]{>}}},postaction={decorate}] (10)  --  (6);
\draw  [decoration={markings,mark=at position 1 with
    {\arrow[scale=1.2,>=stealth]{>}}},postaction={decorate}] (8)  --  (5);
\draw  [decoration={markings,mark=at position 1 with
    {\arrow[scale=1.2,>=stealth]{>}}},postaction={decorate}] (11)  --  (5);
 \draw  [decoration={markings,mark=at position 1 with
    {\arrow[scale=1.2,>=stealth]{>}}},postaction={decorate}] (13)  --  (11);
\draw  [decoration={markings,mark=at position 1 with
    {\arrow[scale=1.2,>=stealth]{>}}},postaction={decorate}] (13)  --  (3);
\draw  [decoration={markings,mark=at position 1 with
    {\arrow[scale=1.2,>=stealth]{>}}},postaction={decorate}] (17)  --  (16);
\draw  [decoration={markings,mark=at position 1 with
    {\arrow[scale=1.2,>=stealth]{>}}},postaction={decorate}] (17)  --  (13);
\draw  [decoration={markings,mark=at position 1 with
    {\arrow[scale=1.2,>=stealth]{>}}},postaction={decorate}] (16)  --  (11);    
\draw  [right hook-,decoration={markings,mark=at position 1 with
    {\arrow[scale=1.2,>=stealth]{>}}},postaction={decorate}] (18)  --  (17);    
\draw  [left hook-,decoration={markings,mark=at position 1 with
    {\arrow[scale=1.2,>=stealth]{>}}},postaction={decorate}] (18)  --  (13);
    
\end{tikzpicture}
\end{center}
Set $t \in (\mathfrak{h}/W)^\Omega$. Through the isomorphism, the fiber $(\overline{\alpha^\Omega})^{-1}(t)$ is the space 
\begin{center}
$\{ (\lambda, t) \in S_{e'} \times \{t\} \ | \ \chi'_{\big| S_{e'}}(\lambda) = f(t)\} \cong (\chi'_{\big| S_{e'}})^{-1}(f(t))$. 
\end{center}
Therefore the type of the singular configuration of $(\overline{\alpha^\Omega})^{-1}(t)$ is the same as the one of $(\chi'_{| S_{e'}})^{-1}(f(t))$, which is given by the sub-root system $\phi'=\{\alpha \in \Phi'^+ \ | \ \alpha(h)=0, \mathrm{with } \ h \in \mathfrak{h}' \ \mathrm{ a \ representative \ of } \ f(t)\}$. \\
\newline

The following conjecture describes the link between the singular configurations of the fibers of $\overline{\alpha^\Omega}$ and sub-root systems of the root system $\Phi'$ of type $\Delta(\Gamma')$ : 
\medskip

\begin{conjecture}\label{Conjecture}
Let $\overline{\alpha^\Omega}$ be a deformation of a simple singularity of type $\Delta(\Gamma')$ obtained through $\Delta(\Gamma)-\Delta(\Gamma,\Gamma')-\Delta(\Gamma')$-procedure. With the notations of the previous commutative diagram, there exists a subset $\Theta$ of the set simple roots of $\Phi'$ such that the base change $f$ is an isomorphism on its image $f(\mathfrak{h}/W)= \pi'(\bigcap_{\alpha \in \Theta}H_\alpha)$, and such that if $h' \in \bigcap_{\alpha \in \phi'}H_\alpha  \ \mathlarger{\mathlarger{\subset}} \  \mathfrak{h}'$ with $\Theta  \ \mathlarger{\mathlarger{\subset}} \  \phi'$ and $\phi'$ maximal for $h'$ (i.e. $\alpha(h') \neq 0$ if $\alpha \nin \phi'$), then the singular configuration of the fiber $(\overline{\alpha^\Omega})^{-1}(f^{-1}(\pi'(h')))$ is of the same type as the root system $\phi'$. Furthermore, all singular configurations of the fibers of $\overline{\alpha^\Omega}$ are obtained in this way. \\
Depending on the type of $\overline{\alpha^\Omega}$, the subset $\Theta$ is given in the following table (the numbering of the root systems is the one of Bourbaki (cf. \cite{Bou68})) : 
\renewcommand{\arraystretch}{1.4}
\begin{center}
\begin{adjustwidth}{-1cm}{-1cm}
\begin{longtable}{|c|c|c|}
\hline
 $\Delta(\Gamma)-\Delta(\Gamma,\Gamma')-\Delta(\Gamma')$ & $\Theta$ & $\Theta$ $\mathrm{in}$ $\Delta(\Gamma')$ \\
\hhline{|=|=|=|}
  \multirow{-3}{*}{$A_{2r-1}-B_r-D_{r+2}$} &   \multirow{-3}{*}{$\{\alpha_{r+1},\alpha_{r+2}\}$} & 
 \begin{tikzpicture}[scale=0.6,  transform shape]
\tikzstyle{pointin}=[circle,draw,fill]
\tikzstyle{pointout}=[circle,draw]
\tikzstyle{ligne}=[thick]
\tikzstyle{pointille}=[thick,dotted]

\node (1) at ( -7,0) [pointout] {};
\node (2) at ( -5,0) [pointout] {};
\node (3) at ( -3,0) [pointout] {};
\node (4) at ( 0,0) [pointout] {};
\node (5) at ( 2,1) [pointin] {};
\node (6) at ( 2,-1) [pointin] {};

\node (7) at ( -7,0.5)  {1};
\node (8) at ( -5,0.5)  {2};
\node (9) at ( -3,0.5)  {3};
\node (10) at ( 0,0.5) {r};
\node (11) at ( 2.6,1) {r+1};
\node (12) at ( 2.6,-1) {r+2};

\draw [ligne] (1)  --  (2);
\draw [ligne] (2)  --  (3);
\draw [pointille] (3)  --  (4);
\draw [ligne] (4)  --  (5);
\draw [ligne] (4)  --  (6);

\end{tikzpicture}\\
\hline

$D_{r+1}-C_r-D_{2r}$ & $\{\alpha_{2i+1}\}_{0 \leq i \leq r-1}$ & 
\begin{tabular}[h]{l} \begin{tikzpicture}[scale=0.5,  transform shape]
\tikzstyle{pointin}=[circle,draw,fill]
\tikzstyle{pointout}=[circle,draw]
\tikzstyle{ligne}=[thick]
\tikzstyle{pointille}=[thick,dotted]

\node (1) at ( -12,0) [pointin] {};
\node (2) at ( -10,0) [pointout] {};
\node (3) at ( -7,0) [pointout] {};
\node (4) at ( -5,0) [pointin] {};
\node (5) at ( -3,0) [pointout] {};
\node (6) at ( 0,0) [pointin] {};
\node (7) at ( 2,0) [pointout] {};
\node (8) at ( 4,1) [pointin] {};
\node (9) at ( 4,-1) [pointout] {};

\node (10) at ( -12,0.5)  {1};
\node (11) at ( -10,0.5)  {2};

\node (12) at ( -7,0.5) {r-1};
\node (13) at ( -5,0.5) {r};
\node (14) at ( -3,0.5) {r+1};

\node (15) at ( 0,0.5) {2r-3};
\node (16) at ( 2,0.5) {2r-2};
\node (17) at ( 4.6,1) {2r-1};
\node (18) at ( 4.6,-1) {2r};
\node (19) at ( -4,-1.2) {\begin{Large} $r$ $\mathrm{odd}$ \end{Large}};

\draw [ligne] (1)  --  (2);
\draw [pointille] (2)  --  (3);
\draw [ligne] (3)  --  (4);
\draw [ligne] (4)  --  (5);
\draw [pointille] (5)  --  (6);
\draw [ligne] (6)  --  (7);
\draw [ligne] (7)  --  (8);
\draw [ligne] (7)  --  (9);

\end{tikzpicture}\\
 \begin{tikzpicture}[scale=0.5,  transform shape]
\tikzstyle{pointin}=[circle,draw,fill]
\tikzstyle{pointout}=[circle,draw]
\tikzstyle{ligne}=[thick]
\tikzstyle{pointille}=[thick,dotted]

\node (1) at ( -12,0) [pointin] {};
\node (2) at ( -10,0) [pointout] {};
\node (3) at ( -7,0) [pointin] {};
\node (4) at ( -5,0) [pointout] {};
\node (5) at ( -3,0) [pointin] {};
\node (6) at ( 0,0) [pointin] {};
\node (7) at ( 2,0) [pointout] {};
\node (8) at ( 4,1) [pointin] {};
\node (9) at ( 4,-1) [pointout] {};

\node (10) at ( -12,0.5)  {1};
\node (11) at ( -10,0.5)  {2};

\node (12) at ( -7,0.5) {r-1};
\node (13) at ( -5,0.5) {r};
\node (14) at ( -3,0.5) {r+1};

\node (15) at ( 0,0.5) {2r-3};
\node (16) at ( 2,0.5) {2r-2};
\node (17) at ( 4.6,1) {2r-1};
\node (18) at ( 4.6,-1) {2r};
\node (19) at ( -4,-1.2) {\begin{Large} $r$ $\mathrm{even}$ \end{Large}};

\draw [ligne] (1)  --  (2);
\draw [pointille] (2)  --  (3);
\draw [ligne] (3)  --  (4);
\draw [ligne] (4)  --  (5);
\draw [pointille] (5)  --  (6);
\draw [ligne] (6)  --  (7);
\draw [ligne] (7)  --  (8);
\draw [ligne] (7)  --  (9);

\end{tikzpicture}
\end{tabular} \\
\hline
  \multirow{-3.5}{*}{$E_{6}-F_4-E_{7}$} &   \multirow{-3.5}{*}{$\{\alpha_2,\alpha_5,\alpha_7\}$} &
 \begin{tikzpicture}[scale=0.6,  transform shape]
\tikzstyle{pointin}=[circle,draw,fill]
\tikzstyle{pointout}=[circle,draw]
\tikzstyle{ligne}=[thick]
\tikzstyle{pointille}=[thick,dotted]

\node (1) at ( -4,0) [pointout] {};
\node (2) at ( 0,-2) [pointin] {};
\node (3) at ( -2,0) [pointout] {};
\node (4) at ( 0,0) [pointout] {};
\node (5) at ( 2,0) [pointin] {};
\node (6) at ( 4,0) [pointout] {};
\node (7) at ( 6,0) [pointin] {};

\node (8) at ( -4,0.5)  {1};
\node (9) at ( -0.5,-2)  {2};
\node (10) at ( -2,0.5)  {3};
\node (11) at ( 0,0.5) {4};
\node (12) at ( 2,0.5) {5};
\node (13) at ( 4,0.5) {6};
\node (14) at ( 6,0.5) {7};

\draw [ligne] (1)  --  (3);
\draw [ligne] (3)  --  (4);
\draw [ligne] (4)  --  (2);
\draw [ligne] (4)  --  (5);
\draw [ligne] (5)  --  (6);
\draw [ligne] (6)  --  (7);

\end{tikzpicture} \\
\hline
  \multirow{-3.5}{*}{$D_{4}-G_2-E_{6}$} &   \multirow{-3.5}{*}{$\{\alpha_1,\alpha_3,\alpha_5,\alpha_6\}$} & 
 \begin{tikzpicture}[scale=0.6,  transform shape]
\tikzstyle{pointin}=[circle,draw,fill]
\tikzstyle{pointout}=[circle,draw]
\tikzstyle{ligne}=[thick]
\tikzstyle{pointille}=[thick,dotted]

\node (1) at ( -4,0) [pointin] {};
\node (2) at ( 0,-2) [pointout] {};
\node (3) at ( -2,0) [pointin] {};
\node (4) at ( 0,0) [pointout] {};
\node (5) at ( 2,0) [pointin] {};
\node (6) at ( 4,0) [pointin] {};

\node (7) at ( -4,0.5)  {1};
\node (8) at ( -0.5,-2)  {2};
\node (9) at ( -2,0.5)  {3};
\node (10) at ( 0,0.5) {4};
\node (11) at ( 2,0.5) {5};
\node (12) at ( 4,0.5) {6};

\draw [ligne] (1)  --  (3);
\draw [ligne] (3)  --  (4);
\draw [ligne] (4)  --  (2);
\draw [ligne] (4)  --  (5);
\draw [ligne] (5)  --  (6);

\end{tikzpicture}  \\
\hline
  \multirow{-3.5}{*}{$D_{4}-G_2-E_{7}$} &    \multirow{-3.5}{*}{$\{\alpha_1,\alpha_2,\alpha_3,\alpha_5,\alpha_7\}$} &
 \begin{tikzpicture}[scale=0.6,  transform shape]
\tikzstyle{pointin}=[circle,draw,fill]
\tikzstyle{pointout}=[circle,draw]
\tikzstyle{ligne}=[thick]
\tikzstyle{pointille}=[thick,dotted]

\node (1) at ( -4,0) [pointin] {};
\node (2) at ( 0,-2) [pointin] {};
\node (3) at ( -2,0) [pointin] {};
\node (4) at ( 0,0) [pointout] {};
\node (5) at ( 2,0) [pointin] {};
\node (6) at ( 4,0) [pointout] {};
\node (7) at ( 6,0) [pointin] {};

\node (8) at ( -4,0.5)  {1};
\node (9) at ( -0.5,-2)  {2};
\node (10) at ( -2,0.5)  {3};
\node (11) at ( 0,0.5) {4};
\node (12) at ( 2,0.5) {5};
\node (13) at ( 4,0.5) {6};
\node (14) at ( 6,0.5) {7};

\draw [ligne] (1)  --  (3);
\draw [ligne] (3)  --  (4);
\draw [ligne] (4)  --  (2);
\draw [ligne] (4)  --  (5);
\draw [ligne] (5)  --  (6);
\draw [ligne] (6)  --  (7);

\end{tikzpicture} \\
\hline
 \end{longtable}
 {\addtocounter{table}{-1}}
 \end{adjustwidth}
\vspace*{-0.5cm}
  \captionof{table}{}
  \label{Table}
\end{center}

\end{conjecture}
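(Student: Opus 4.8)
The plan is to reduce the whole conjecture to a single statement about the base change $f$, and then to establish that statement by an explicit computation in each of the five tractable types. The decisive point is that the Brieskorn--Slodowy description recalled just before the conjecture already identifies the type of the singular configuration of $(\chi'_{|S_{e'}})^{-1}(\pi'(h'))$ with the sub-root system $\{\alpha\in\Phi' : \alpha(h')=0\}$. Since $\overline{\alpha^\Omega}$ is isomorphic to the pullback $\chi'_f$, once one knows that $f$ is an isomorphism from $(\mathfrak{h}/W)^\Omega$ onto $\pi'(\bigcap_{\alpha\in\Theta}H_\alpha)$, every fiber of $\overline{\alpha^\Omega}$ coincides, up to isomorphism, with some $(\chi'_{|S_{e'}})^{-1}(\pi'(h'))$ for $h'\in\bigcap_{\alpha\in\Theta}H_\alpha$; its singular configuration is then $\{\alpha:\alpha(h')=0\}\supseteq\Theta$, and for $h'$ with $\phi'=\{\alpha:\alpha(h')=0\}$ maximal one gets exactly type $\phi'$. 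Conversely every such saturated $\phi'\supseteq\Theta$ is realized. So the conjecture follows from two assertions: (i) the identification of $\Theta$ with the subset of simple roots listed in the table of Conjecture~\ref{Conjecture}; (ii) the claim that $f$ is a closed embedding with image $\pi'(\bigcap_{\alpha\in\Theta}H_\alpha)$.

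For (ii) I would argue type by type, using the explicit material available to us. On one side we have the explicit families $\overline{\alpha^\Omega}$ for $B_2$, $B_3$, $C_3$, $F_4$, $G_2$, written as in Section~\ref{sec : quotientsingu} and \cite{Cara17} as one-parameter-families of surfaces in $\cc^3$ whose coefficients are explicit polynomials in the flat coordinates $t_i$. On the other side, a semiuniversal deformation of a simple singularity of the relevant homogeneous type $\Delta(\Gamma')\in\{D_4,D_5,D_6,E_6,E_7\}$ has a standard normal form over $\mathfrak{h}'/W'$ (Brieskorn; for the $D$-types the Kas--Schlessinger normal form), with the basic invariants of $W'$ as parameters $u_1,\dots,u_n$. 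After an explicit coordinate change on $\cc^3$ that brings the quotient surface to this normal form, one reads off each $u_j$ as an explicit polynomial $u_j=f_j(t)$ in the $t_i$; this is the map $f$. One then checks that $f$ is injective, hence -- its source having dimension $n-|\Theta|$ by the rank table of Section~\ref{sec : quotientsingu}, which equals $\dim\bigcap_{\alpha\in\Theta}H_\alpha$ -- a closed embedding, and that the ideal of $f((\mathfrak{h}/W)^\Omega)$ in $\cc[u_1,\dots,u_n]$ is generated exactly by the linear forms cutting out the image $\pi'(\bigcap_{\alpha\in\Theta}H_\alpha)$ of the corresponding wall of the fundamental chamber.

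Assertion (i) is then not an independent input but a by-product: $\Theta$ is forced to be the set of $\alpha\in\Phi'^+$ that vanish on a suitable lift to $\mathfrak{h}'$ of $f((\mathfrak{h}/W)^\Omega)$, and the only thing to verify is that, for the correct choice of representative (that is, of $W'$-translate), this set lies inside a simple system and coincides with the list in the table; the Bourbaki numbering together with a direct inspection of which roots vanish on the computed subspace settle this in each of the five cases, the cardinalities matching ($2$, $r$, $3$, $5$, $4$ against $n-\mathrm{rank}(\overline{\alpha^\Omega})$). The ``furthermore'' clause -- that all singular configurations occur -- then follows from surjectivity of $f$ onto $\pi'(\bigcap_{\alpha\in\Theta}H_\alpha)$ together with the fact that for every saturated sub-root system $\phi'\supseteq\Theta$ the locus of $h'\in\bigcap_{\alpha\in\Theta}H_\alpha$ with $\{\alpha:\alpha(h')=0\}=\phi'$ is nonempty.

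The main obstacle, and the reason the conjecture is proved only in low rank, is step (ii) for the infinite families: \cite{Cara17} produces a closed form for $\overline{\alpha^\Omega}$ only for $B_2$, $B_3$, $C_3$ (besides $F_4$ and $G_2$), so for $B_r$ ($r\geq 4$) and $C_r$ ($r\geq 4$) there is simply no explicit $f$ to analyze, and a normal form for the quotient family uniform in $r$ seems hard to produce. Even within the five tractable cases the delicate steps are the coordinate change on $\cc^3$ that removes the cross-terms and normalizes the leading part of the quotient surface -- this is where the unwieldy coefficient polynomials, such as the $A_i$ in the $C_3$ example, must be carried through -- and the passage from the inclusion $f((\mathfrak{h}/W)^\Omega)\subseteq\pi'(\bigcap_{\alpha\in\Theta}H_\alpha)$ to equality, that is, checking that $f$ is dominant onto that wall and that $\Theta$ genuinely sits inside a single simple system.
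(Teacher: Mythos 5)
Your overall architecture is sound, and its first half---the reduction of Conjecture~\ref{Conjecture} to the single assertion that $f$ is an isomorphism of $(\mathfrak{h}/W)^\Omega$ onto $\pi'(\bigcap_{\alpha\in\Theta}H_\alpha)$, via the Brieskorn--Slodowy description of the singular configuration of $(\chi'_{|S_{e'}})^{-1}(\pi'(h'))$---is exactly the framework set up in Section~\ref{sec : singuconf}. Where you genuinely diverge from the paper is in how the singular configurations of the fibers of $\overline{\alpha^\Omega}$ are obtained. You propose to normalize the explicit quotient family to the standard semiuniversal form over $\mathfrak{h}'/W'$, so that $\overline{\alpha^\Omega}\cong\chi'_f$ is exhibited and the configurations come for free from the root-theoretic recipe. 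The paper instead runs two independent computations and matches them: Part~1 of the strategy in Section~\ref{Proof} determines the singular points of each explicit quotient surface directly from its equation, Part~2 computes the stratification of the wall $\bigcap_{\alpha\in\Theta}H_\alpha$ by the sub-root systems $\phi'\supseteq\Theta$ together with the flat coordinates restricted to that wall, and the verification consists in checking that the explicitly defined $f$ carries one stratification onto the other. Your route is less redundant and more conceptual, but it front-loads the need for the semiuniversal deformation of $\Delta(\Gamma')$ in flat coordinates over its whole base (already very heavy for $E_7$), whereas the paper only ever needs the flat coordinates restricted to a two-, three- or four-dimensional wall; this trade-off is presumably why the author accepts the redundancy.

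Two details of your argument need repair, though neither is fatal. First, $\pi'(\bigcap_{\alpha\in\Theta}H_\alpha)$ is not cut out by linear forms in the invariants: already for $A_3-B_2-D_4$ its ideal is generated by $\psi$ and $\psi_6+\frac{1}{108}\psi_2^3+\frac{1}{6}\psi_2\psi_4$, and the relations the paper records for the $E_7$ cases are of high degree; these polynomial relations must be computed in each case. Second, injectivity of a polynomial map between varieties of equal dimension does not by itself give a closed embedding (compare $t\mapsto(t^2,t^3)$); the paper establishes the isomorphism onto $\pi'(\bigcap_{\alpha\in\Theta}H_\alpha)$ by exhibiting an explicit polynomial inverse of $f$ there, and you will need to do the same.
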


The next statement validates the conjecture for small rank cases.

\begin{theorem}\label{Maintheorem}
The Conjecture~\ref{Conjecture} is true for the types : 
\begin{itemize}
\centering
\item $A_3-B_2-D_4$, 
\item $A_5-B_3-D_5$,
\item  $D_4-C_3-D_6$,
\item  $D_4-G_2-E_6$,
\item  $D_4-G_2-E_7$,
\item  $E_6-F_4-E_7$.
\end{itemize}

\end{theorem}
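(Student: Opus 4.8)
The plan is to reduce Conjecture~\ref{Conjecture}, in each of the six cases, to a single statement about the base change $f$ of the commutative diagram preceding the conjecture, and then to prove that statement by an explicit weighted-homogeneous computation. By that diagram the fibre $(\overline{\alpha^\Omega})^{-1}(t)$ is isomorphic to $(\chi'_{\big|S_{e'}})^{-1}(f(t))$, and by the theorem of Brieskorn and Slodowy recalled above its singular configuration is of type $\phi'=\{\alpha\in\Phi'^+\ :\ \alpha(h)=0\}$ for any representative $h\in\mathfrak{h}'$ of $f(t)$. I claim it suffices to prove, for the subset $\Theta$ of simple roots of $\Phi'$ listed in Table~\ref{Table}, that $f$ is an isomorphism of $(\mathfrak{h}/W)^\Omega$ onto $\pi'\!\left(\bigcap_{\alpha\in\Theta}H_\alpha\right)$. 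Indeed, granting this: every value $f(t)$ has a representative $h$ lying in $\bigcap_{\alpha\in\Theta}H_\alpha$, so $\Theta\subseteq\phi'$ and $\phi'$ is automatically maximal for $h$; conversely, any sub-root system $\phi'$ of $\Phi'$ with $\Theta\subseteq\phi'$ that is maximal for some $h'\in\bigcap_{\alpha\in\phi'}H_\alpha$ satisfies $h'\in\bigcap_{\alpha\in\Theta}H_\alpha$, so $\pi'(h')$ lies in the image of $f$ and the fibre of $\overline{\alpha^\Omega}$ over $f^{-1}(\pi'(h'))$ has singular configuration of type exactly $\phi'$. This covers the completeness clause as well, so only the isomorphism statement remains.

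To prove it I would start from the explicit equation of the generic fibre of $\overline{\alpha^\Omega}$: for $A_3-B_2-D_4$, $D_4-C_3-D_6$, $E_6-F_4-E_7$, $D_4-G_2-E_6$ and $D_4-G_2-E_7$ these are the ones displayed in the examples of Section~\ref{sec : quotientsingu} and in the proof of Theorem~\ref{ThmSingular}, and for $A_5-B_3-D_5$ it is obtained by the same procedure from the general $B_r$ equation appearing in the proof of Theorem~\ref{ThmSingular}. On the other side I would take a weighted-homogeneous versal unfolding of $R_{\Gamma'}$ over a system of flat coordinates of $\mathfrak{h}'/W'$, i.e. the equation of $(\chi'_{\big|S_{e'}})^{-1}$ over $\mathfrak{h}'/W'$. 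Because both families are $\mathbb{C}^*$-equivariant and $f$ can be chosen $\mathbb{C}^*$-equivariant with the prescribed differential at $0$, the map $f$ is polynomial with components of prescribed weighted degrees, hence has only finitely many unknown coefficients; identifying the two families monomial by monomial, after a weighted-homogeneous coordinate change on $\mathbb{C}^3$, determines these coefficients and so $f$ itself. From the resulting formula one reads off that the image of $f$ is $\pi'(V)$, where $V\subseteq\mathfrak{h}'$ is the linear subspace on which precisely the roots of $\Theta$ vanish --- that is, $V=\bigcap_{\alpha\in\Theta}H_\alpha$ --- which matches dimensions, since the rank of $\overline{\alpha^\Omega}$ equals $\mathrm{rank}(\Phi')-|\Theta|$ in each of the six cases; the explicit description of $f$ then shows that it is an isomorphism onto $\pi'\!\left(\bigcap_{\alpha\in\Theta}H_\alpha\right)$.

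As a cross-check, and to see the realized configurations concretely, I would enumerate in each case the sub-root systems $\phi'$ of $\Phi'$ containing $\Theta$ up to $W'$-conjugacy --- a short list in these small-rank cases --- and for each exhibit a base point $t$ and compute, directly from the explicit fibre equation, the type of the singular configuration of $(\overline{\alpha^\Omega})^{-1}(t)$, checking that it agrees with the type of $\phi'$ and that no other type occurs. I expect the main obstacle to be the step of the previous paragraph: the explicit computation of $f$ and the coordinate identification of the two families. This is routine for $B_2$, $B_3$ and $C_3$, but delicate for $F_4$ and for the two $G_2$ cases, where the fibre equations and the versal unfoldings of the $E_6$- and $E_7$-singularities are the heaviest; and one must be careful to show that the image of $f$ is \emph{exactly} the slice $\pi'(\bigcap_{\alpha\in\Theta}H_\alpha)$ of Table~\ref{Table}, and not merely some slice of the correct dimension --- equivalently, that this slice meets the remaining reflection hyperplanes generically enough for every sub-root system containing $\Theta$ to appear, which is confirmed by the enumeration above.
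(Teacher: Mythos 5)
Your proposal is correct in substance, but it reaches the theorem by a genuinely different route from the paper. You reduce all three clauses of Conjecture~\ref{Conjecture} to the single claim that the base change $f$ maps $(\mathfrak{h}/W)^\Omega$ isomorphically onto $\pi'\bigl(\bigcap_{\alpha \in \Theta}H_\alpha\bigr)$, and you propose to prove that claim by explicitly identifying $\overline{\alpha^\Omega}$ with the pullback of $\chi'_{|S_{e'}}$ through a weighted-homogeneous, monomial-by-monomial comparison of the two families; the types of the singular configurations then come for free from the description $\phi'=\{\alpha\in\Phi'^+ \mid \alpha(h)=0\}$ recalled in Section~\ref{sec : singuconf}. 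The reduction itself is sound (and your rank count $\mathrm{rank}(\Phi')-|\Theta|=\mathrm{rank}(\overline{\alpha^\Omega})$ checks out in all six cases). The paper instead works both sides independently: it computes, stratum by stratum over $(\mathfrak{h}/W)^\Omega$, the singular configurations of the fibers of $\overline{\alpha^\Omega}$ directly from the explicit quotient equations, enumerates the realizations of the sub-root systems containing $\Theta$, constructs $f$ from the flat coordinates restricted to $\bigcap_{\alpha\in\Theta}H_\alpha$, and then checks that $f$ carries one stratification onto the other. What your route buys is that you never have to locate and classify the singular points of the quotient surfaces themselves; what it costs is that you must write down the semiuniversal deformation of the target singularity ($D_4$, $D_5$, $D_6$, $E_6$, $E_7$) over flat coordinates and carry out the fiberwise coordinate identification --- for the two $E_7$ targets this is at least as heavy as the direct singularity analysis, and you should be explicit that the coordinate change on $\cc^3$ will in general depend on the base parameters. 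The paper's two-sided computation also functions as an internal consistency check that a one-sided argument forgoes, which is a reason to keep your ``cross-check'' enumeration as part of the proof rather than as an afterthought.
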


\section{Proof of Theorem~\ref{Maintheorem}}\label{Proof}

In what follows, the set of coordinates on $\mathfrak{h}/W$ and $\mathfrak{h}'/W'$ will always be the flat coordinates (cf. \cite{Saito79} and \cite{Saito83} for details). 

\subsection{Strategy of the proof}
The strategy for the proof of each case is twofold :
\begin{itemize}

\item Part 1 : \begin{itemize} 
\item Compute the deformation $\alpha^\Omega$ and its discriminant.
\item Decompose the discriminant depending on what singular configurations appear in the fibers of $\alpha^\Omega$.
\item Compute the deformation $\overline{\alpha^\Omega}$.
\item Determine the types of singular configurations appearing in the fibers of $\overline{\alpha^\Omega}$ above the previously obtained decomposition of $(\mathfrak{h}/W)^\Omega$.
\end{itemize}

\item Part 2 : \begin{itemize} 
\item Compute all realizations of the sub-root systems of the root system of type $\Delta(\Gamma')$ containing $\Theta$.
\item Compute the flat coordinates of the Cartan subalgebra $\mathfrak{h}'$ of type $\Delta(\Gamma')$, and the restriction of the map $\pi' : \mathfrak{h}' \rightarrow \mathfrak{h}'/W'$ to the subspace $\bigcap_{\alpha \in \Theta}H_\alpha$, with $H_\alpha=\{h \in \mathfrak{h}' \ | \ \alpha(h)=0\}$.
\item Construct the base change $f : (\mathfrak{h}/W)^\Omega \xrightarrow{\hspace*{1cm}} \mathfrak{h}'/W'$.
\item Verify that $f : (\mathfrak{h}/W)^\Omega \stackrel{\cong}{\xrightarrow{\hspace*{1cm}}}   \pi'(\bigcap_{\alpha \in \Theta}H_\alpha)$.
\item Check that the morphism $f$ realizes a bijection between the singular configurations of the fibers of $\overline{\alpha^\Omega}$ and the sub-root systems of $\Delta(\Gamma')$ containing $\Theta$.
\end{itemize}
\end{itemize}

\subsection{Case $A_3-B_2-D_4$}
In this section $\Gamma=\mathcal{C}_4$, $\Gamma'=\mathcal{D}_2$ and $\Omega=\mathbb{Z}/2\mathbb{Z}=<\sigma>$. It is known from \cite{Cara17} that the map $\alpha^\Omega$ is given by the projection
\begin{center}
\begin{tikzpicture}[scale=1]
\node (1) at ( 0,0) {$ X_{\Gamma, \Omega}=\{((x,y,z),(t_2,0,t_4)) \in \cc^3 \times \mathfrak{h}/W \ | \ z^4+t_2z^2+t_4+\frac{t_2^2}{8}=xy\}$};
\node (2) at ( 0,-2) {$(\mathfrak{h}/W)^\Omega=\{ (t_2,0,t_4) \in  \mathfrak{h}/W\}$};
\node (3) at ( 0.5, -1)  {$\alpha^\Omega$};

\draw  [decoration={markings,mark=at position 1 with
    {\arrow[scale=1.2,>=stealth]{>}}},postaction={decorate}] (1)  --  (2);
\end{tikzpicture}
\end{center}
Furthermore, the action of $\Omega$ on a fiber is $\sigma.(x,y,z)=(y,x,-z)$, and so the quotient $\overline{\alpha^\Omega}$ is given by
\begin{center}
\begin{tikzpicture}[scale=1]
\node (1) at ( 0,0) {$ X_{\Gamma, \Omega}/\Omega=\{((X,Y,Z),(t_2,0,t_4)) \in \cc^3 \times \mathfrak{h}/W \ | \ Z(X^2-4Z^2)+W^2-4t_2Z^2-4(t_4+\frac{t_2^2}{8})Z=0\}$};
\node (2) at ( 0,-2) {$(\mathfrak{h}/W)^\Omega=\{ (t_2,0,t_4) \in  \mathfrak{h}/W\}$};
\node (3) at ( 0.5, -1)  {$\overline{\alpha^\Omega}$};

\draw  [decoration={markings,mark=at position 1 with
    {\arrow[scale=1.2,>=stealth]{>}}},postaction={decorate}] (1)  --  (2);
\end{tikzpicture}
\end{center}
Before quotient, the discriminant of $\alpha^\Omega$ is 

\begin{center}
$\{(t_2,0,t_4) \in  \mathfrak{h}/W \ | \ (t_2^2+8t_4)(t_2^2-8t_4)=0\}$. 
\end{center}

\noindent After quotient, it is known from Theorem~\ref{ThmSingular} that all of $(\mathfrak{h}/W)^\Omega$ composes the discriminant. By studying explicitly the singularities in the fibers $(\alpha^\Omega)^{-1}(t_2,0,t_4)$ and $(\overline{\alpha^\Omega})^{-1}(t_2,0,t_4)$, we obtain the following types : 
\smallskip
\begin{center}
\renewcommand{\arraystretch}{1.4}
\begin{tabular}{|c|c|c|}
\hline
$(t_2,0,t_4)$ & Configuration of $(\alpha^\Omega)^{-1}(t_2,0,t_4)$ & Configuration of $(\overline{\alpha^\Omega})^{-1}(t_2,0,t_4)$ \\
\hhline{|=|=|=|}
Generic point & $p_1+p_2$ & $A_1+A_1$ \\
\hline
$t_4=-\frac{t_2^2}{8}\neq 0$ & $A_1$ & $A_3$ \\
\hline
 \multirow{-3}{*}{$t_4=\frac{t_2^2}{8} \neq 0$} & \begin{tikzpicture}[scale=0.8]
\node (9) at (  - 4,0)  {$p_1$};
\node (10) at (  - 3,0)  {$+$};
\node (1) at (  - 2,0)  {$p_2$};
\node (2) at ( 0,0)  {$A_1$};
\node (3) at ( 2,0) {$A_1$};
\node (4) at (  0.2, - 0.25) {};
\node (5) at ( 1.8, - 0.25) {};
\node (6) at ( 1, - 1) {$\Omega$};
\node (7) at ( -1, 0) {$+$};
\node (8) at ( 1, 0) {$+$};
\draw[<->] (4.west) to[out= - 70, in= - 110] (5.east);
\end{tikzpicture} &  \multirow{-3}{*}{$A_1+A_1+A_1$} \\
\hline
$t_2=t_4=0$ & $A_3$ & $D_4$ \\
\hline
\end{tabular}
 \captionof{table}{}
  \label{TypesofsingularA3}
\end{center}
\medskip

\noindent with $p_i$ ($i=1,2$) denoting a smooth point, and if the action of $\Omega$ is not mentioned, the point (smooth or singular) is $\Omega$-fixed. \\
\medskip

Let $\Pi'=\{\alpha_1,\alpha_2,\alpha_3,\alpha_4\}$ be a set of simple roots of the root system of type $D_4$ (cf. Table~\ref{Table} for the numbering). All the sub-root systems of $D_4$ containing $\Theta=\{\alpha_3,\alpha_4\}$ are either of type $A_1+A_1$, $A_1+A_1+A_1$, $A_3$ or $D_4$, and their realizations $\phi'$ are given in the following table : 
\smallskip

\begin{center}
\renewcommand{\arraystretch}{1.4}
\begin{tabular}{|c|c|}
\hline
Type of $\phi'$ & Realizations of $\phi'$ \\
\hhline{|=|=|}
$A_1+A_1$ &  $<\alpha_3,\alpha_4>$ \\
\hline
$A_1+A_1+A_1$  & $<\alpha_1,\alpha_3,\alpha_4>$, $<\alpha_1+2\alpha_2+\alpha_3+\alpha_4,\alpha_3,\alpha_4>$ \\
\hline
 $A_3$ & $<\alpha_2,\alpha_3,\alpha_4>$, $<\alpha_1+\alpha_2,\alpha_3,\alpha_4>$ \\
\hline
$D_4$  & $<\alpha_1,\alpha_2,\alpha_3,\alpha_4>$ \\
\hline
\end{tabular}
 \captionof{table}{}
  \label{realizationsA3}
\end{center}
where $<...>$ means the root system generated by $"..."$.
\medskip

Set $(e_1,e_2,e_3,e_4)$ an orthonormal basis of $\mathfrak{h}'$ of type $D_4$, and define $(\epsilon_1,\epsilon_2,\epsilon_3,\epsilon_4)$ its dual basis. Following \cite{Bou68}, the simple roots of $D_4$ are 
\medskip

\begin{center}
\renewcommand{\arraystretch}{1.2}
\raisebox{-.55\height}{$\left\lbrace\begin{array}{ccl}
\alpha_1 & = & \epsilon_1-\epsilon_2, \\
\alpha_2 & = & \epsilon_2-\epsilon_3, \\
\alpha_3 & = & \epsilon_3-\epsilon_4, \\
\alpha_4 & = & \epsilon_3+\epsilon_4. \\
\end{array}\right.$}
\end{center}
\medskip

Let $h'=(\xi_1,\xi_2,\xi_3,\xi_4) \in \mathfrak{h}'$, with the coordinates being expressed in the basis $(e_1,e_2,e_3,e_4)$. Then $h' \in H_{\alpha_3} \bigcap H_{\alpha_4}$ if and only if $\xi_3=\xi_4=0$. \\

Using \cite{SYS80}, one can compute the flat coordinates of $D_4$ restricted to the subspace $H_{\alpha_3} \bigcap H_{\alpha_4}  \ \mathlarger{\mathlarger{\subset}} \  \mathfrak{h}'$ and obtains
\medskip

\begin{center}
\renewcommand{\arraystretch}{1.2}
\raisebox{-.55\height}{$\left\lbrace\begin{array}{ccl}
\psi_2(\xi_1,\xi_2,0,0) & = & \xi_1^2+\xi_2^2, \\
\psi_4(\xi_1,\xi_2,0,0) & = & -\frac{1}{4}( \xi_1^2-\xi_2^2)^2, \\
\psi_6(\xi_1,\xi_2,0,0) & = & -\frac{1}{6}(\xi_1^2+\xi_2^2)\xi_1^2\xi_2^2+\frac{7}{216}(\xi_1^2+\xi_2^2)^3, \\
\psi(\xi_1,\xi_2,0,0) & = & 0. \\
\end{array}\right.$}
\end{center}
\medskip
One notices that $\psi_6=-\frac{1}{108}\psi_2^3-\frac{1}{6}\psi_2\psi_4$. \\

Define 
\begin{center}
\renewcommand{\arraystretch}{1.2}
$\begin{array}[t]{rccc}
f :  & (\mathfrak{h}/W)^\Omega & \rightarrow & \mathfrak{h}'/W' \\
   & \begin{pmatrix}
   t_2  \\
   0 \\
   t_4
   \end{pmatrix} & \mapsto & \begin{pmatrix}
   t_2  \\
   t_4-\frac{t_2^2}{8} \\
   \frac{5}{432}t_2^3-\frac{1}{6}t_2t_4 \\
   0
   \end{pmatrix}
\end{array}$
\end{center}
The morphism $f$ is clearly injective, and if $(\psi_2,\psi_4,\psi_6,0) \in \pi'(H_{\alpha_3} \bigcap H_{\alpha_4})$, then by setting $t_2=\psi_2$ and $t_4=\psi_4+\frac{1}{8}\psi_2^2$, it follows that $f(t_2,0,t_4)=(\psi_2,\psi_4,\psi_6,0)$. Hence
\begin{center}
 \begin{tikzpicture}[scale=1,  transform shape]
\node (1) at ( 0,0)  {$f :  (\mathfrak{h}/W)^\Omega$};
\node (2) at ( 5,0)  {$\pi'(H_{\alpha_3} \bigcap H_{\alpha_4})  \ \mathlarger{\mathlarger{\subset}} \  \mathfrak{h}'/W'.$};
\node (3) at ( 2,0.2)  {$\cong$};
\draw  [decoration={markings,mark=at position 1 with
    {\arrow[scale=1.2,>=stealth]{>}}},postaction={decorate}] (1)  --  (2);
\end{tikzpicture}
\end{center}

Let $\phi'$ be a sub-root system in Table~\ref{realizationsA3}, and set $h' \in \bigcap_{\alpha \in \phi'}H_\alpha$. With the formulas given before, one can compute $(t_2,0,t_4)=f^{-1}(\pi'(h'))$, and verify the following correspondence : \begin{center}
\renewcommand{\arraystretch}{1.5}
\begin{tabular}[h]{lcl}
$\phi'$ of type $A_1+A_1$ &  \begin{tikzpicture}[scale=1,  transform shape]
\node (1) at ( 0,0)  {};
\node (2) at ( 2,0)  {};
\draw  [decoration={markings,mark=at position 1 with
    {\arrow[scale=1.2,>=stealth]{>}}},postaction={decorate}] (1)  --  (2);
\draw  [decoration={markings,mark=at position 1 with
    {\arrow[scale=1.2,>=stealth]{>}}},postaction={decorate}] (2)  --  (1);
\end{tikzpicture} & $(t_2,0,t_4)$ generic, \\
$\phi'$ of type $A_1+A_1+A_1$ &  \begin{tikzpicture}[scale=1,  transform shape]
\node (1) at ( 0,0)  {};
\node (2) at ( 2,0)  {};
\draw  [decoration={markings,mark=at position 1 with
    {\arrow[scale=1.2,>=stealth]{>}}},postaction={decorate}] (1)  --  (2);
\draw  [decoration={markings,mark=at position 1 with
    {\arrow[scale=1.2,>=stealth]{>}}},postaction={decorate}] (2)  --  (1);
\end{tikzpicture} & $(t_2,0,t_4)$ verifies $t_4=\frac{t_2^2}{8} \neq 0$, \\
$\phi'$ of type $A_3$ &  \begin{tikzpicture}[scale=1,  transform shape]
\node (1) at ( 0,0)  {};
\node (2) at ( 2,0)  {};
\draw  [decoration={markings,mark=at position 1 with
    {\arrow[scale=1.2,>=stealth]{>}}},postaction={decorate}] (1)  --  (2);
\draw  [decoration={markings,mark=at position 1 with
    {\arrow[scale=1.2,>=stealth]{>}}},postaction={decorate}] (2)  --  (1);
\end{tikzpicture} & $(t_2,0,t_4)$ verifies $t_4=-\frac{t_2^2}{8} \neq 0$, \\
$\phi'$ of type $D_4$ &  \begin{tikzpicture}[scale=1,  transform shape]
\node (1) at ( 0,0)  {};
\node (2) at ( 2,0)  {};
\draw  [decoration={markings,mark=at position 1 with
    {\arrow[scale=1.2,>=stealth]{>}}},postaction={decorate}] (1)  --  (2);
\draw  [decoration={markings,mark=at position 1 with
    {\arrow[scale=1.2,>=stealth]{>}}},postaction={decorate}] (2)  --  (1);
\end{tikzpicture} & $(t_2,0,t_4)=(0,0,0)$. \\
\end{tabular}
\end{center}
With Table~\ref{TypesofsingularA3}, we see that the singular configuration in the fiber of $\overline{\alpha^\Omega}$ above $f^{-1}(\pi'(h'))$ is of the same type as $\phi'$. Therefore for the type $A_3-B_2-D_4$, the map $f$ realizes a bijection between the singular configurations of the fibers of $\overline{\alpha^\Omega}$ and the sub-root systems of $D_4$ containing $\Theta=\{\alpha_3,\alpha_4\}$.
\medskip

\subsection{Case $A_5-B_3-D_5$}
In this part $\Gamma=\mathcal{C}_6$, $\Gamma'=\mathcal{D}_3$ and $\Omega=\mathbb{Z}/2\mathbb{Z}=<\sigma>$. The map $\alpha^\Omega$ is computed in \cite{Cara17} and is given by the projection
\begin{center}
\begin{tikzpicture}[scale=1]
\node (1) at ( 0,0) {$X_{\Gamma, \Omega}=\{((x,y,z),(t_2,0,t_4,0,t_6)) \in \cc^3 \times  \mathfrak{h}/W \ | \ z^6+t_2z^4+(t_4+\frac{t_2^2}{4})z^2+t_6+\frac{t_2t_4}{6}+\frac{t_2^3}{108}=xy\}$};
\node (2) at ( 0,-2) {$(\mathfrak{h}/W)^\Omega=\{ (t_2,0,t_4,0,t_6) \in  \mathfrak{h}/W\}$};
\node (3) at ( 0.5, -1)  {$\alpha^\Omega$};

\draw  [decoration={markings,mark=at position 1 with
    {\arrow[scale=1.2,>=stealth]{>}}},postaction={decorate}] (1)  --  (2);
\end{tikzpicture}
\end{center}
Furthermore, the action of $\Omega$ on a fiber is $\sigma.(x,y,z)=(-y,-x,-z)$. The quotient $\overline{\alpha^\Omega}$ is then given by
\begin{center}
\begin{tikzpicture}[scale=1]
\node (1) at ( 0,0) {$ X_{\Gamma, \Omega}/\Omega=\{((X,Y,Z),(t_2,0,t_4,0,t_6)) \in \cc^3 \times \mathfrak{h}/W \ | \ Z(X^2+4Z^3)+W^2+4t_2Z^3$};
\node (2) at ( 0,-0.5) {$+4(t_4+\frac{t_2^2}{4})Z^2+4(t_6+\frac{t_2t_4}{6}+\frac{t_2^3}{108})Z=0\}$};
\node (3) at ( 0,-2.5) {$(\mathfrak{h}/W)^\Omega=\{ (t_2,0,t_4,0,t_6) \in  \mathfrak{h}/W\}$};
\node (4) at ( 0.5, -1.5)  {$\overline{\alpha^\Omega}$};

\draw  [decoration={markings,mark=at position 1 with
    {\arrow[scale=1.2,>=stealth]{>}}},postaction={decorate}] (2)  --  (3);
\end{tikzpicture}
\end{center}
Before quotient, the discriminant of $\alpha^\Omega$ is given by $H_1 \bigcup H_2  \ \mathlarger{\mathlarger{\subset}} \  (\mathfrak{h}/W)^{\Omega}$ with 
\begin{center}
$H_1=\{t_6+\frac{t_2t_4}{6}+\frac{t_2^3}{108}=0\}$, $H_2=\{-\frac{t_2^6}{432}+\frac{t_2^4t_4}{12}-\frac{t_2^2t_4^2}{4}-9t_2t_4t_6+4t_4^3+27t_6^2=0\}$.
\end{center}
 After quotient, Theorem~\ref{ThmSingular} says that the discriminant is all of $(\mathfrak{h}/W)^\Omega$. By studying explicitly the singularities in the fibers $(\alpha^\Omega)^{-1}(t_2,0,t_4,0,t_6)$ and $(\overline{\alpha^\Omega})^{-1}(t_2,0,t_4,0,t_6)$, we obtain the following types : 
\medskip

\begin{center}
\renewcommand{\arraystretch}{1.4}
\begin{tabular}{|c|c|c|}
\hline
$(t_2,0,t_4,0,t_6)$  & Configuration of $(\alpha^\Omega)^{-1}(t_2,0,t_4,0,t_6)$ & $(\overline{\alpha^\Omega})^{-1}(t_2,0,t_4,0,t_6)$ \\
\hhline{|=|=|=|}
Generic point & $p_1+p_2$ & $A_1+A_1$ \\
\hline
$H_1 \bigcap \{t_4 \neq 0 \}$ & $A_1$ & $A_3$ \\
\hline
 \multirow{-3}{*}{$H_1 \bigcap \{t_4 = 0, t_2 \neq 0 \}$} & \begin{tikzpicture}[scale=0.8]
\node (1) at (  - 2,0)  {$A_1$};
\node (2) at ( 0,0)  {$A_1$};
\node (3) at ( 2,0) {$A_1$};
\node (4) at (  0.2, - 0.25) {};
\node (5) at ( 1.8, - 0.25) {};
\node (6) at ( 1, - 1) {$\Omega$};
\node (7) at ( -1, 0) {$+$};
\node (8) at ( 1, 0) {$+$};
\draw[<->] (4.west) to[out= - 70, in= - 110] (5.east);
\end{tikzpicture} &  \multirow{-3}{*}{$A_3+A_1$} \\
\hline
$H_1 \bigcap \{t_4 =-\frac{t_2^2}{4} \neq 0 \}$ & $A_3$ & $D_4$ \\
\hline
 \multirow{-3}{*}{$H_2 \backslash \{0\}$} &  \begin{tikzpicture}[scale=0.8]
\node (10) at (  - 3,0)  {$+$};
\node (9) at (  - 4,0)  {$p_1$};
\node (1) at (  - 2,0)  {$p_2$};
\node (2) at ( 0,0)  {$A_1$};
\node (3) at ( 2,0) {$A_1$};
\node (4) at (  0.2, - 0.25) {};
\node (5) at ( 1.8, - 0.25) {};
\node (6) at ( 1, - 1) {$\Omega$};
\node (7) at ( -1, 0) {$+$};
\node (8) at ( 1, 0) {$+$};
\draw[<->] (4.west) to[out= - 70, in= - 110] (5.east);
\end{tikzpicture} &  \multirow{-3}{*}{$A_1+A_1+A_1$} \\
\hline
 \multirow{-3}{*}{$H_2 \bigcap \{t_4 = \frac{t_2^2}{12} \neq 0 \}$} &  \begin{tikzpicture}[scale=0.8]
\node (10) at (  - 3,0)  {$+$};
\node (9) at (  - 4,0)  {$p_1$};
\node (1) at (  - 2,0)  {$p_2$};
\node (2) at ( 0,0)  {$A_2$};
\node (3) at ( 2,0) {$A_2$};
\node (4) at (  0.2, - 0.25) {};
\node (5) at ( 1.8, - 0.25) {};
\node (6) at ( 1, - 1) {$\Omega$};
\node (7) at ( -1, 0) {$+$};
\node (8) at ( 1, 0) {$+$};
\draw[<->] (4.west) to[out= - 70, in= - 110] (5.east);
\end{tikzpicture} &  \multirow{-3}{*}{$A_1+A_1+A_2$} \\
\hline
$t_2=t_4=t_6=0$ & $A_5$ & $D_5$ \\
\hline
\end{tabular}
 \captionof{table}{}
  \label{TypesofsingularA5}
\end{center}
\medskip

\noindent with $p_i$ ($i=1,2$) denoting a smooth point, and if the action of $\Omega$ is not mentioned, the point (smooth or singular) is fixed. \\
\newline

Let $\Pi'=\{\alpha_1,\alpha_2,\alpha_3,\alpha_4,\alpha_5\}$ be a set of simple roots of the root system of type $D_5$ (cf. Table~\ref{Table} for the numbering). All the sub-root systems $\phi'$ of $D_5$ containing $\Theta=\{\alpha_4,\alpha_5\}$, as well as their realizations are given in the following table : 
\medskip

\begin{center}
\renewcommand{\arraystretch}{1.5}
\begin{longtable}{|c|>{\centering\arraybackslash}p{12cm}|}
\hline
Type of $\phi'$ & Realizations of $\phi'$ \\
\hhline{|=|=|}
$A_1+A_1$ &  $<\alpha_4,\alpha_5>$ \\
\hline
 \multirow{2}{*}{$A_1+A_1+A_1$}  & $<\alpha_1,\alpha_4,\alpha_5>$, $<\alpha_2,\alpha_4,\alpha_5>$, $<\alpha_1+\alpha_2+\alpha_4,\alpha_5>$,  $<\alpha_1+2\alpha_2+2\alpha_3+\alpha_4+\alpha_5,\alpha_4,\alpha_5>$, $<\alpha_1+\alpha_2+2\alpha_3+\alpha_4+\alpha_5,\alpha_4,\alpha_5>$,  $<\alpha_2+2\alpha_3+\alpha_4+\alpha_5,\alpha_4,\alpha_5>$ \\
\hline
 $A_3$ &  $<\alpha_3,\alpha_4,\alpha_5>$,  $<\alpha_2+\alpha_3,\alpha_4,\alpha_5>$,  $<\alpha_1+\alpha_2+\alpha_3,\alpha_4,\alpha_5>$ \\
\hline
$A_1+A_1+A_2$ &  $<\alpha_1,\alpha_2,\alpha_4,\alpha_5>$, $<\alpha_1,\alpha_2+2\alpha_3+\alpha_4+\alpha_5,\alpha_4,\alpha_5>$, $<\alpha_2,\alpha_1+\alpha_2+2\alpha_3+\alpha_4+\alpha_5,\alpha_4,\alpha_5>$,  $<\alpha_1+\alpha_2,\alpha_2+2\alpha_3+\alpha_4+\alpha_5,\alpha_4,\alpha_5>$\\
\hline
$A_3+A_1$ & $<\alpha_1,\alpha_3,\alpha_4,\alpha_5>$, $<\alpha_1+\alpha_2,\alpha_2+\alpha_3,\alpha_4,\alpha_5>$,  $<\alpha_1+2\alpha_2+2\alpha_3+\alpha_4+\alpha_5,\alpha_3,\alpha_4,\alpha_5>$, $<\alpha_1+\alpha_2+2\alpha_3+\alpha_4+\alpha_5,\alpha_2+\alpha_3,\alpha_4,\alpha_5>$, $<\alpha_1+\alpha_2+\alpha_3,\alpha_2,\alpha_4,\alpha_5>$, $<\alpha_1+\alpha_2+\alpha_3,\alpha_2+2\alpha_3+\alpha_4+\alpha_5,\alpha_4,\alpha_5>$,\\
\hline
$D_4$  &  $<\alpha_2,\alpha_3,\alpha_4,\alpha_5>$,  $<\alpha_1+\alpha_2,\alpha_3,\alpha_4,\alpha_5>$,  $<\alpha_1,\alpha_2+\alpha_3,\alpha_4,\alpha_5>$\\
\hline
$D_5$  &   $<\alpha_1,\alpha_2,\alpha_3,\alpha_4,\alpha_5>$\\
\hline
\end{longtable}
{\addtocounter{table}{-1}}
 \captionof{table}{}
  \label{realizationsA5}
\end{center}
\medskip

Set $(e_1,e_2,e_3,e_4,e_5)$ an orthonormal basis of $\mathfrak{h}'$ of type $D_5$, and define $(\epsilon_1,\epsilon_2,\epsilon_3,\epsilon_4,\epsilon_5)$ its dual basis. Following \cite{Bou68}, the simple roots of $D_5$ are 
\medskip

\begin{center}
\renewcommand{\arraystretch}{1.2}
\raisebox{-.55\height}{$\left\lbrace\begin{array}{ccl}
\alpha_1 & = & \epsilon_1-\epsilon_2, \\
\alpha_2 & = & \epsilon_2-\epsilon_3, \\
\alpha_3 & = & \epsilon_3-\epsilon_4, \\
\alpha_4 & = & \epsilon_4-\epsilon_5. \\
\alpha_5 & = & \epsilon_4+\epsilon_5. \\
\end{array}\right.$}
\end{center}
\medskip

Let $h'=(\xi_1,\xi_2,\xi_3,\xi_4,\xi_5) \in \mathfrak{h}'$, with the coordinates being expressed in the basis $(e_1,e_2,e_3,e_4,e_5)$. Then $h' \in H_{\alpha_4} \bigcap H_{\alpha_5}$ if and only if $\xi_4=\xi_5=0$. \\

Using \cite{SYS80}, one can compute the flat coordinates of $D_5$ restricted to the subspace $H_{\alpha_4} \bigcap H_{\alpha_5}  \ \mathlarger{\mathlarger{\subset}} \  \mathfrak{h}'$ and obtain
\medskip

\begin{center}
\renewcommand{\arraystretch}{1.2}
\raisebox{-.55\height}{$\left\lbrace\begin{array}{ccl}
\psi_2(\xi_1,\xi_2,\xi_3) & = & \xi_1^2+\xi_2^2+\xi_3^2, \\
\psi_4(\xi_1,\xi_2,\xi_3) & = & \xi_1^2\xi_2^2+\xi_1^2\xi_3^2+\xi_2^2\xi_3^2-\frac{5}{16}(\xi_1^2+\xi_2^2+\xi_3^2)^2, \\
\psi_6(\xi_1,\xi_2,\xi_3) & = & \xi_1^2\xi_2^2\xi_3^2-\frac{3}{8}(\xi_1^2+\xi_2^2+\xi_3^2)(\xi_1^2\xi_2^2+\xi_1^2\xi_3^2+\xi_2^2\xi_3^2)+\frac{11}{128}(\xi_1^2+\xi_2^2+\xi_3^2)^3, \\
\psi_8(\xi_1,\xi_2,\xi_3) & = & \begin{array}[t]{l} -\frac{1}{8}(\xi_1^2+\xi_2^2+\xi_3^2)\xi_1^2\xi_2^2\xi_3^2-\frac{1}{16}(\xi_1^2\xi_2^2+\xi_1^2\xi_3^2+\xi_2^2\xi_3^2)^2 \\ +\frac{9}{128}(\xi_1^2\xi_2^2+\xi_1^2\xi_3^2+\xi_2^2\xi_3^2)(\xi_1^2+\xi_2^2+\xi_3^2)^2-\frac{51}{4096}(\xi_1^2+\xi_2^2+\xi_3^2)^4, \end{array} \\
\psi(\xi_1,\xi_2,\xi_3) & = & 0. \\
\end{array}\right.$}
\end{center}
\medskip
One can see that $\psi_8=-\frac{1}{2048}\psi_2^4-\frac{1}{8}\psi_2\psi_6-\frac{1}{64}\psi_2^2\psi_4-\frac{1}{16}\psi_4^2$. \\

Define 
\begin{center}
$\begin{array}[t]{rccc}
f :  & (\mathfrak{h}/W)^\Omega & \rightarrow & \mathfrak{h}'/W' \\
   & \begin{pmatrix}
   t_2  \\
   0 \\
   t_4 \\
   0 \\
   t_6
   \end{pmatrix} & \mapsto & \renewcommand{\arraystretch}{1.2}\begin{pmatrix}
   t_2  \\
   t_4-\frac{1}{16}t_2^2 \\
  t_6-\frac{5}{24}t_2t_4+\frac{5}{3456}t_2^3 \\
  \frac{7}{110592}t_2^4-\frac{1}{8}t_2t_6+\frac{7}{384}t_2^2t_4-\frac{1}{16}t_4^2 \\
   0
   \end{pmatrix}
\end{array}$
\end{center}
We see that $f$ is an injective morphism, and if $(\psi_2,\psi_4,\psi_6,\psi_8,0) \in \pi'(H_{\alpha_4} \bigcap H_{\alpha_5})$, then by setting 
\begin{center}
\renewcommand{\arraystretch}{1.2}
\raisebox{-.55\height}{$\left\lbrace\begin{array}{l}
t_2=\psi_2, \\
t_4=\psi_4+\frac{1}{16}\psi_2^2, \\
t_6=\psi_6+\frac{5}{24}\psi_2\psi_4+\frac{5}{432}\psi_2^3,
\end{array}\right.$}
\end{center}
it follows that $f(t_2,0,t_4,0,t_6)=(\psi_2,\psi_4,\psi_6,\psi_8,0)$. Hence
\begin{center}
 \begin{tikzpicture}[scale=1,  transform shape]
\node (1) at ( 0,0)  {$f :  (\mathfrak{h}/W)^\Omega$};
\node (2) at ( 5,0)  {$\pi'(H_{\alpha_4} \bigcap H_{\alpha_5})  \ \mathlarger{\mathlarger{\subset}} \  \mathfrak{h}'/W'.$};
\node (3) at ( 2,0.2)  {$\cong$};
\draw  [decoration={markings,mark=at position 1 with
    {\arrow[scale=1.2,>=stealth]{>}}},postaction={decorate}] (1)  --  (2);
\end{tikzpicture}
\end{center}

Let $\phi'$ be a sub-root system in Table~\ref{realizationsA5}, and set $h' \in \bigcap_{\alpha \in \phi'}H_\alpha$. With the formulas just given, one can compute $(t_2,0,t_4,0,t_6)=f^{-1}(\pi'(h'))$, and check the following correspondence : \begin{center}
\renewcommand{\arraystretch}{1.5}
\begin{tabular}[h]{lcl}
$\phi'$ of type $A_1+A_1$ &  \begin{tikzpicture}[scale=1,  transform shape]
\node (1) at ( 0,0)  {};
\node (2) at ( 2,0)  {};
\draw  [decoration={markings,mark=at position 1 with
    {\arrow[scale=1.2,>=stealth]{>}}},postaction={decorate}] (1)  --  (2);
\draw  [decoration={markings,mark=at position 1 with
    {\arrow[scale=1.2,>=stealth]{>}}},postaction={decorate}] (2)  --  (1);
\end{tikzpicture} & $(t_2,0,t_4)$ generic, \\
$\phi'$ of type $A_1+A_1+A_1$ &  \begin{tikzpicture}[scale=1,  transform shape]
\node (1) at ( 0,0)  {};
\node (2) at ( 2,0)  {};
\draw  [decoration={markings,mark=at position 1 with
    {\arrow[scale=1.2,>=stealth]{>}}},postaction={decorate}] (1)  --  (2);
\draw  [decoration={markings,mark=at position 1 with
    {\arrow[scale=1.2,>=stealth]{>}}},postaction={decorate}] (2)  --  (1);
\end{tikzpicture} & $(t_2,0,t_4)$ verifies $(t_2,0,t_4,0,t_6) \in H_2 \backslash \{0\}$, \\
$\phi'$ of type $A_3$ &  \begin{tikzpicture}[scale=1,  transform shape]
\node (1) at ( 0,0)  {};
\node (2) at ( 2,0)  {};
\draw  [decoration={markings,mark=at position 1 with
    {\arrow[scale=1.2,>=stealth]{>}}},postaction={decorate}] (1)  --  (2);
\draw  [decoration={markings,mark=at position 1 with
    {\arrow[scale=1.2,>=stealth]{>}}},postaction={decorate}] (2)  --  (1);
\end{tikzpicture} & $(t_2,0,t_4)$ verifies $(t_2,0,t_4,0,t_6) \in H_1 \bigcap \{t_4 \neq 0\}$, \\
$\phi'$ of type $A_2+A_1+A_1$ &  \begin{tikzpicture}[scale=1,  transform shape]
\node (1) at ( 0,0)  {};
\node (2) at ( 2,0)  {};
\draw  [decoration={markings,mark=at position 1 with
    {\arrow[scale=1.2,>=stealth]{>}}},postaction={decorate}] (1)  --  (2);
\draw  [decoration={markings,mark=at position 1 with
    {\arrow[scale=1.2,>=stealth]{>}}},postaction={decorate}] (2)  --  (1);
\end{tikzpicture} & $(t_2,0,t_4,0,t_6) \in H_2 \bigcap \{t_4 =\frac{t_2^2}{12} \neq 0\}$, \\
$\phi'$ of type $A_3+A_1$ &  \begin{tikzpicture}[scale=1,  transform shape]
\node (1) at ( 0,0)  {};
\node (2) at ( 2,0)  {};
\draw  [decoration={markings,mark=at position 1 with
    {\arrow[scale=1.2,>=stealth]{>}}},postaction={decorate}] (1)  --  (2);
\draw  [decoration={markings,mark=at position 1 with
    {\arrow[scale=1.2,>=stealth]{>}}},postaction={decorate}] (2)  --  (1);
\end{tikzpicture} & $(t_2,0,t_4,0,t_6) \in H_1 \bigcap \{t_4 =0, t_2 \neq 0\}$, \\
$\phi'$ of type $D_4$ &  \begin{tikzpicture}[scale=1,  transform shape]
\node (1) at ( 0,0)  {};
\node (2) at ( 2,0)  {};
\draw  [decoration={markings,mark=at position 1 with
    {\arrow[scale=1.2,>=stealth]{>}}},postaction={decorate}] (1)  --  (2);
\draw  [decoration={markings,mark=at position 1 with
    {\arrow[scale=1.2,>=stealth]{>}}},postaction={decorate}] (2)  --  (1);
\end{tikzpicture} &  $(t_2,0,t_4,0,t_6) \in H_1 \bigcap \{t_4 =-\frac{t_2^2}{4} \neq 0\}$, \\
$\phi'$ of type $D_5$ &  \begin{tikzpicture}[scale=1,  transform shape]
\node (1) at ( 0,0)  {};
\node (2) at ( 2,0)  {};
\draw  [decoration={markings,mark=at position 1 with
    {\arrow[scale=1.2,>=stealth]{>}}},postaction={decorate}] (1)  --  (2);
\draw  [decoration={markings,mark=at position 1 with
    {\arrow[scale=1.2,>=stealth]{>}}},postaction={decorate}] (2)  --  (1);
\end{tikzpicture} &  $(t_2,0,t_4,0,t_6)=(0,0,0,0,0)$. \\
\end{tabular}
\end{center}
With Table~\ref{TypesofsingularA5}, we see that the singular configuration in the fiber of $\overline{\alpha^\Omega}$ above $f^{-1}(\pi'(h'))$ is of the same type as $\phi'$. Therefore for the type $A_5-B_3-D_5$, the map $f$ realizes a bijection between the singular configurations of the fibers of $\overline{\alpha^\Omega}$ and the sub-root systems of $D_5$ containing $\Theta=\{\alpha_4,\alpha_5\}$.
\medskip

\subsection{Case $D_4-C_3-D_6$}

Here $\Gamma=\mathcal{D}_2$, $\Gamma'=\mathcal{D}_4$ and $\Omega=\mathbb{Z}/2\mathbb{Z}=<\sigma>$. It is known from \cite{Cara17} that the map $\alpha^\Omega$ is the projection
\begin{center}
\begin{tikzpicture}[scale=1]
\node (1) at ( 0,0) {$X_{\Gamma, \Omega}=\{((x,y,z),(t_2,t_4,t_6,0)) \in \cc^3 \times  \mathfrak{h}/W \ | \ z^2=xy(x+y)-\frac{t_2}{2}xy-\frac{t_4}{4}x+\frac{1}{4}(t_6+\frac{t_2t_4}{6}+\frac{t_2^3}{108})\}$};
\node (2) at ( 0,-2) {$(\mathfrak{h}/W)^\Omega=\{ (t_2,t_4,t_6,0) \in  \mathfrak{h}/W\}$};
\node (3) at ( 0.5, -1)  {$\alpha^\Omega$};

\draw  [decoration={markings,mark=at position 1 with
    {\arrow[scale=1.2,>=stealth]{>}}},postaction={decorate}] (1)  --  (2);
\end{tikzpicture}
\end{center}
Furthermore, the action of $\Omega$ on a fiber is $\sigma.(x,y,z)=(x,-x-y+\frac{t_2}{2},-z)$, and the quotient $\overline{\alpha^\Omega}$ is given by
\begin{center}
\begin{tikzpicture}[scale=1]
\node (1) at ( 0,0) {$ X_{\Gamma, \Omega}/\Omega=\{((X,Y,Z),(t_2,t_4,t_6,0)) \in \cc^3 \times \mathfrak{h}/W \ | \ -\frac{1}{64}X^5+XY^2-W^2$};
\node (2) at ( 0,-0.5) {$+A_{X^4}X^4+A_{X^3}X^3+A_{X^2}X^2+A_{X}X+A_{Y}Y+A_0=0\}$};
\node (3) at ( 0,-2.5) {$(\mathfrak{h}/W)^\Omega=\{ (t_2,t_4,t_6,0) \in  \mathfrak{h}/W\}$};
\node (4) at ( 0.5, -1.5)  {$\overline{\alpha^\Omega}$};
\draw  [decoration={markings,mark=at position 1 with
    {\arrow[scale=1.2,>=stealth]{>}}},postaction={decorate}] (2)  --  (3);
\end{tikzpicture}
\end{center}
with
\begin{center}
 $\raisebox{-.4\height}{$\left\lbrace  \renewcommand{\arraystretch}{1.5}  \begin{array}[h]{l}
 A_{X^4}= \frac{t_2}{32},\\
A_{X^3}=-\frac{3}{128}t_2^2-\frac{1}{32}t_4, \\
A_{X^2}=\frac{7}{192}t_2t_4+\frac{1}{32}t_6+\frac{7}{864}t_2^3, \\
A_{X}=-\frac{1}{32}t_6t_2-\frac{5}{384}t_2^2t_4-\frac{35}{27648}t_2^4-\frac{1}{64}t_4^2, \\
A_{Y}=\frac{1}{4}t_6+\frac{1}{24}t_2t_4+\frac{1}{432}t_2^3, \\
A_0=\frac{1}{128}t_6t_2^2+\frac{1}{32}t_6t_4+\frac{11}{6912}t_2^3t_4+\frac{1}{192}t_2t_4^2+\frac{1}{13824}t_2^5.
\end{array}\right.$}$ 
\end{center}
Before quotient, the discriminant of $\alpha^\Omega$ is given by $L \bigcup H  \ \mathlarger{\mathlarger{\subset}} \  (\mathfrak{h}/W)^{\Omega}$ with 
\begin{center}
$L=\{t_6+\frac{t_2t_4}{6}+\frac{t_2^3}{108}=0\}$, $H=\{\frac{t_2^6}{6912}-\frac{t_2^4t_4}{192}+\frac{t_2^2t_4^2}{64}+\frac{9}{16}t_2t_4t_6-\frac{t_4^3}{4}-\frac{27}{16}t_6^2=0\}$. 
\end{center}
After quotient, it is known from Theorem~\ref{ThmSingular} that all of $(\mathfrak{h}/W)^\Omega$ is the discriminant. By studying explicitly the singularities in the fibers $(\alpha^\Omega)^{-1}(t_2,t_4,t_6,0)$ and $(\overline{\alpha^\Omega})^{-1}(t_2,t_4,t_6,0)$, we obtain the following types
\medskip

\begin{center}
\renewcommand{\arraystretch}{1.5}
\begin{tabular}{|c|c|c|}
\hline
$(t_2,t_4,t_6,0)$  & Configuration of $(\alpha^\Omega)^{-1}(t_2,t_4,t_6,0)$ & $(\overline{\alpha^\Omega})^{-1}(t_2,t_4,t_6,0)$ \\
\hhline{|=|=|=|}
$(\cc^3 \times \{0\}) \backslash (L \bigcup H)$ & $p_1+p_2+p_3$ & $A_1+A_1+A_1$ \\
\hline
 \multirow{-2.5}{*}{$L \backslash \{t_4 = -\frac{t_2^2}{4} \}$} &  \begin{tikzpicture}[scale=0.8]
\node (1) at ( 0,0)  {$A_1$};
\node (2) at ( 1, 0) {$+$};
\node (3) at ( 2,0) {$A_1$};
\node (4) at (  0.2, - 0.25) {};
\node (5) at ( 1.8, - 0.25) {};
\node (6) at ( 1, - 1) {$\Omega$};
\node (7) at ( 3, 0) {$+$};
\node (8) at (  4,0)  {$p_1$};
\node (9) at ( 5, 0) {$+$};
\node (10) at ( 6,0)  {$p_2$};
\node (11) at ( 7, 0) {$+$};
\node (12) at ( 8,0)  {$p_3$};
\draw[<->] (4.west) to[out= - 70, in= - 110] (5.east);
\end{tikzpicture} & \multirow{-3}{*}{$A_1+A_1+A_1+A_1$}\\
\hline
$H \backslash ((L \bigcap \{t_4 = -\frac{t_2^2}{4}\}) \bigcup \{t_4 = \frac{t_2^2}{12}\})$ & $A_1+p$ & $A_3+A_1$ \\
\hline
$L \bigcap \{t_4 = -\frac{t_2^2}{4} \neq 0\}$ & $A_3+p$ & $D_4+A_1$  \\
\hline
$H \bigcap  \{t_4 = \frac{t_2^2}{12} \neq 0\}$ & $A_2$ &  $A_5$ \\
\hline
 \multirow{-3}{*}{$(L \bigcap H) \backslash \{t_4 = -\frac{t_2^2}{4} \}$} &  \begin{tikzpicture}[scale=0.8]
\node (1) at ( 0,0)  {$A_1$};
\node (2) at ( 1, 0) {$+$};
\node (3) at ( 2,0) {$A_1$};
\node (4) at (  0.2, - 0.25) {};
\node (5) at ( 1.8, - 0.25) {};
\node (6) at ( 1, - 1) {$\Omega$};
\node (7) at ( 3, 0) {$+$};
\node (8) at ( 4,0)  {$A_1$};
\node (9) at (5, 0) {$+$};
\node (10) at (6,0)  {$p$};
\draw[<->] (4.west) to[out= - 70, in= - 110] (5.east);
\end{tikzpicture} &  \multirow{-3}{*}{$A_3+A_1+A_1$} \\
\hline
$t_2=t_4=t_6=0$ & $D_4$ & $D_6$ \\
\hline
\end{tabular}
 \captionof{table}{}
  \label{TypesofsingularD4andZ/2Z}
\end{center}
\medskip
\noindent with $p_i$ ($i=1,2,3,\emptyset$) denoting a smooth point, and if the action of $\Omega$ is not mentioned, the point (smooth or singular) is fixed. \\
\newline

Let $\Pi'=\{\alpha_1,\alpha_2,\alpha_3,\alpha_4,\alpha_5,\alpha_6\}$ be a set of simple roots of the root system of type $D_6$ (cf. Table~\ref{Table} for the numbering). All the sub-root systems $\phi'$ of $D_6$ containing $\Theta=\{\alpha_1,\alpha_3,\alpha_5\}$, as well as their realizations are given in the following table : 
\medskip

\begin{center}
\renewcommand{\arraystretch}{1.5}
\begin{tabular}{|c|>{\centering\arraybackslash}p{12cm}|}
\hline
Type of $\phi'$ & Realizations of $\phi'$ \\
\hhline{|=|=|}
$A_1+A_1+A_1$ &  $<\alpha_1,\alpha_3,\alpha_5>$ \\
\hline
 \multirow{2}{*}{$A_1+A_1+A_1+A_1$}  & $<\alpha_1,\alpha_3,\alpha_5,\alpha_6>$, $<\alpha_1,\alpha_3,\alpha_5,\alpha_1+2\alpha_2+2\alpha_3+2\alpha_4+\alpha_5+\alpha_6>$, $<\alpha_1,\alpha_3,\alpha_5,\alpha_3+2\alpha_4+\alpha_5+\alpha_6>$ \\
\hline
  \multirow{2}{*}{$A_3+A_1$} &  $<\alpha_1,\alpha_3,\alpha_5,\alpha_2>$, $<\alpha_1,\alpha_3,\alpha_5,\alpha_4>$, $<\alpha_1,\alpha_3,\alpha_5,\alpha_2+\alpha_3+\alpha_4>$, $<\alpha_1,\alpha_3,\alpha_5,\alpha_2+\alpha_3+\alpha_4+\alpha_6>$, $<\alpha_1,\alpha_3,\alpha_5,\alpha_4+\alpha_6>$, $<\alpha_1,\alpha_3,\alpha_5,\alpha_2+\alpha_3+2\alpha_4+\alpha_5+\alpha_6>$ \\
\hline
\multirow{4}{*}{$A_3+A_1+A_1$} & $<\alpha_1,\alpha_3,\alpha_5,\alpha_2,\alpha_6>$, $<\alpha_1,\alpha_3,\alpha_5,\alpha_4,\alpha_1+2\alpha_2+2\alpha_3+2\alpha_4+\alpha_5+\alpha_6>$, $<\alpha_1,\alpha_3,\alpha_5,\alpha_2+\alpha_3+\alpha_4,\alpha_3+2\alpha_4+\alpha_5+\alpha_6>$, $<\alpha_1,\alpha_3,\alpha_5,\alpha_2+\alpha_3+\alpha_4+\alpha_6,\alpha_3+2\alpha_4+\alpha_5+\alpha_6>$, $<\alpha_1,\alpha_3,\alpha_5,\alpha_4+\alpha_6,\alpha_1+2\alpha_2+2\alpha_3+2\alpha_4+\alpha_5+\alpha_6>$, $<\alpha_1,\alpha_3,\alpha_5,\alpha_2+\alpha_3+2\alpha_4+\alpha_5+\alpha_6,\alpha_6>$ \\
\hline
$D_4+A_1$ & $<\alpha_1,\alpha_3,\alpha_5, \alpha_2, \alpha_3+2\alpha_4+\alpha_5+\alpha_6>$, $<\alpha_1,\alpha_3,\alpha_5, \alpha_4, \alpha_6>$, $<\alpha_1,\alpha_3,\alpha_5, \alpha_2+\alpha_3+\alpha_4, \alpha_6>$\\
\hline
$A_5$  & $<\alpha_1,\alpha_3,\alpha_5,\alpha_2,\alpha_4>$, $<\alpha_1,\alpha_3,\alpha_5,\alpha_2+\alpha_3+\alpha_4+\alpha_6,\alpha_4>$, $<\alpha_1,\alpha_3,\alpha_5,\alpha_2,\alpha_4+\alpha_6>$, $<\alpha_1,\alpha_3,\alpha_5,\alpha_4+\alpha_6,\alpha_2+\alpha_3+\alpha_4>$ \\
\hline
$D_6$  & $<\alpha_1,\alpha_2,\alpha_3,\alpha_4,\alpha_5,\alpha_6>$ \\
\hline
\end{tabular}
 \captionof{table}{}
  \label{realizationsD4andZ/2Z}
\end{center}
\medskip

Set $(e_1,e_2,e_3,e_4,e_5,e_6)$ an orthonormal basis of $\mathfrak{h}'$ of type $D_6$, and define $(\epsilon_1,\epsilon_2,\epsilon_3,\epsilon_4,\epsilon_5,\epsilon_6)$ its dual basis. Following \cite{Bou68}, the simple roots of $D_5$ are 
\medskip

\begin{center}
\renewcommand{\arraystretch}{1.2}
\raisebox{-.55\height}{$\left\lbrace\begin{array}{ccl}
\alpha_1 & = & \epsilon_1-\epsilon_2, \\
\alpha_2 & = & \epsilon_2-\epsilon_3, \\
\alpha_3 & = & \epsilon_3-\epsilon_4, \\
\alpha_4 & = & \epsilon_4-\epsilon_5, \\
\alpha_5 & = & \epsilon_5-\epsilon_6, \\
\alpha_6 & = & \epsilon_5+\epsilon_6.
\end{array}\right.$}
\end{center}
\medskip

Let $h'=(\xi_1,\xi_2,\xi_3,\xi_4,\xi_5,\xi_6) \in \mathfrak{h}'$, with the coordinates being expressed in the basis $(e_1,e_2,e_3,e_4,e_5,e_6)$. Then $h' \in H_{\alpha_1} \bigcap H_{\alpha_3} \bigcap H_{\alpha_5}$ if and only if $\xi_1=\xi_2$, $\xi_3=\xi_4$ and $\xi_5=\xi_6$. \\
\newline

Using \cite{SYS80}, one can compute the flat coordinates of $D_6$ restricted to the subspace $H_{\alpha_1} \bigcap H_{\alpha_3} \bigcap H_{\alpha_5}  \ \mathlarger{\mathlarger{\subset}} \  \mathfrak{h}'$ and obtain
\medskip

\begin{center}
\renewcommand{\arraystretch}{1.2}
\raisebox{-.55\height}{$\left\lbrace\begin{array}{ccl}
\psi_2(\xi_1,\xi_3,\xi_5) & = & 2(\xi_1^2+\xi_3^2+\xi_5^2), \\
\psi_4(\xi_1,\xi_3,\xi_5) & = & -\frac{2}{5}\xi_1^4-\frac{2}{5}\xi_3^4-\frac{2}{5}\xi_5^4+\frac{6}{5}\xi_1^2\xi_3^2+\frac{6}{5}\xi_1^2\xi_5^2+\frac{6}{5}\xi_3^2\xi_5^2, \\
\psi_6(\xi_1,\xi_3,\xi_5) & = & 2 \xi_1^2\xi_3^2\xi_5^2, \\
\psi_8(\xi_1,\xi_3,\xi_5) & = & \begin{array}[t]{l} \frac{4}{125}\xi_1^8+ \frac{4}{125}\xi_3^8+ \frac{4}{125}\xi_5^8- \frac{14}{125}\xi_1^6\xi_3^2-\frac{14}{125}\xi_1^6\xi_5^2-\frac{14}{125}\xi_1^2\xi_3^6-\frac{14}{125}\xi_1^2\xi_5^6 \\ -\frac{14}{125}\xi_3^6\xi_5^2-\frac{14}{125}\xi_3^2\xi_5^6+\frac{14}{125}\xi_1^4\xi_3^4+\frac{14}{125}\xi_1^4\xi_5^4+\frac{14}{125}\xi_3^4\xi_5^4+\frac{98}{125}\xi_1^4\xi_3^2\xi_5^2 \\ +\frac{98}{125}\xi_1^2\xi_3^2\xi_5^4+\frac{98}{125}\xi_1^2\xi_3^4\xi_5^2,  \end{array} \\
\psi_{10}(\xi_1,\xi_3,\xi_5) & = & \begin{array}[t]{l} -\frac{108}{625}\xi_1^2\xi_3^2\xi_5^6-\frac{108}{625}\xi_1^6\xi_3^2\xi_5^2-\frac{108}{625}\xi_1^2\xi_3^6\xi_5^2-\frac{22}{3125}\xi_1^{10}-\frac{22}{3125}\xi_3^{10}-\frac{22}{3125}\xi_5^{10} \\ -\frac{24}{625}\xi_1^6\xi_3^4-\frac{24}{625}\xi_1^6\xi_5^4-\frac{24}{625}\xi_1^4\xi_3^6-\frac{24}{625}\xi_1^4\xi_5^6-\frac{24}{625}\xi_3^6\xi_5^4-\frac{24}{625}\xi_3^4\xi_5^6 \\ +\frac{18}{625}\xi_1^8\xi_3^2+\frac{18}{625}\xi_1^8\xi_5^2+\frac{18}{625}\xi_1^2\xi_3^8+\frac{18}{625}\xi_1^2\xi_5^8+\frac{18}{625}\xi_3^8\xi_5^2+\frac{18}{625}\xi_3^2\xi_5^8 \\ +\frac{648}{625}\xi_1^4\xi_3^2\xi_5^4+\frac{648}{625}\xi_1^2\xi_3^4\xi_5^4+\frac{648}{625}\xi_1^4\xi_3^4\xi_5^2,  \end{array} \\
\psi(\xi_1,\xi_3,\xi_5) & = & \xi_1^2\xi_3^2\xi_5^2. \\
\end{array}\right.$}
\end{center}
\medskip

\noindent We can verify that \renewcommand{\arraystretch}{1.4}
\raisebox{-.55\height}{$\left\lbrace\begin{array}{ccl} 
\psi_8 & = & \frac{1}{5}\psi_2\psi_6-\frac{1}{100}\psi_2^2\psi_4+\frac{1}{10}\psi_4^2, \\
\psi_{10} & = & -\frac{1}{50000}\psi_2^5+\frac{1}{50}\psi_2^2\psi_6-\frac{1}{50}\psi_2\psi_4^2+\frac{2}{5}\psi_4\psi_6,\\
\psi & = & \frac{1}{2}\psi_6. \\
\end{array}\right.$}

Define 
\begin{center}
 
$\begin{array}[t]{rccc}
f :  & (\mathfrak{h}/W)^\Omega & \rightarrow & \mathfrak{h}'/W' \\
   & \begin{pmatrix}
   t_2  \\
   t_4 \\
   t_6 \\
   0 
   \end{pmatrix} & \mapsto & \renewcommand{\arraystretch}{1.4}\begin{pmatrix}
   t_2  \\
   \frac{1}{2}t_4+\frac{1}{40}t_2^2 \\
  \frac{1}{4}t_6+\frac{1}{24}t_2t_4+\frac{1}{432}t_2^3 \\
   \frac{1}{20}t_2t_6+\frac{7}{1200}t_2^2t_4+\frac{119}{432000}t_2^4+\frac{1}{40}t_4^2    \\
   \frac{133}{3600000}t_2^5+\frac{3}{400}t_2^2t_6+\frac{131}{108000}t_2^3t_4+\frac{1}{300}t_2t_4^2+\frac{1}{20}t_4t_6 \\
   \frac{1}{8}t_6+\frac{1}{48}t_2t_4+\frac{1}{864}t_2^3
   \end{pmatrix}
\end{array}$
\end{center}
It is clear that $f$ is an injective morphism, and if $(\psi_2,\psi_4,\psi_6,\psi_8,\psi_{10},\psi) \in \pi'(H_{\alpha_1} \bigcap H_{\alpha_3}\bigcap H_{\alpha_5})$, then by setting 
\begin{center}
\renewcommand{\arraystretch}{1.2}
\raisebox{-.55\height}{$\left\lbrace\begin{array}{l}
t_2=\psi_2, \\
t_4=2\psi_4-\frac{1}{20}\psi_2^2, \\
t_6=4\psi_6-\frac{1}{3}\psi_2\psi_4-\frac{1}{1080}\psi_2^3,
\end{array}\right.$}
\end{center}
it follows that $f(t_2,t_4,t_6,0)=(\psi_2,\psi_4,\psi_6,\psi_8,\psi_{10},\psi)$. Hence
\begin{center}
 \begin{tikzpicture}[scale=1,  transform shape]
\node (1) at ( 0,0)  {$f :  (\mathfrak{h}/W)^\Omega$};
\node (2) at ( 5,0)  {$\pi'(H_{\alpha_1} \bigcap H_{\alpha_3}\bigcap H_{\alpha_5})  \ \mathlarger{\mathlarger{\subset}} \  \mathfrak{h}'/W'.$};
\node (3) at ( 1.8,0.2)  {$\cong$};
\draw  [decoration={markings,mark=at position 1 with
    {\arrow[scale=1.2,>=stealth]{>}}},postaction={decorate}] (1)  --  (2);
\end{tikzpicture}
\end{center}

Let $\phi'$ be a sub-root system in Table~\ref{realizationsD4andZ/2Z}, and set $h' \in \bigcap_{\alpha \in \phi'}H_\alpha$. With the formulas just given, one can compute $(t_2,t_4,t_6,0)=f^{-1}(\pi'(h'))$, and verify the following correspondence : 
\begin{center}
\renewcommand{\arraystretch}{1.5}
\begin{tabular}[h]{lcl}
$\phi'$ of type $A_1+A_1+A_1$ &  \begin{tikzpicture}[scale=1,  transform shape]
\node (1) at ( 0,0)  {};
\node (2) at ( 2,0)  {};
\draw  [decoration={markings,mark=at position 1 with
    {\arrow[scale=1.2,>=stealth]{>}}},postaction={decorate}] (1)  --  (2);
\draw  [decoration={markings,mark=at position 1 with
    {\arrow[scale=1.2,>=stealth]{>}}},postaction={decorate}] (2)  --  (1);
\end{tikzpicture} & $(t_2,t_4,t_6,0)$ generic, \\
$\phi'$ of type $A_1+A_1+A_1+A_1$ &  \begin{tikzpicture}[scale=1,  transform shape]
\node (1) at ( 0,0)  {};
\node (2) at ( 2,0)  {};
\draw  [decoration={markings,mark=at position 1 with
    {\arrow[scale=1.2,>=stealth]{>}}},postaction={decorate}] (1)  --  (2);
\draw  [decoration={markings,mark=at position 1 with
    {\arrow[scale=1.2,>=stealth]{>}}},postaction={decorate}] (2)  --  (1);
\end{tikzpicture} & $(t_2,t_4,t_6,0) \in L \backslash \{t_4=-\frac{t_2^2}{4}\}$, \\
$\phi'$ of type $A_3+A_1$ &  \begin{tikzpicture}[scale=1,  transform shape]
\node (1) at ( 0,0)  {};
\node (2) at ( 2,0)  {};
\draw  [decoration={markings,mark=at position 1 with
    {\arrow[scale=1.2,>=stealth]{>}}},postaction={decorate}] (1)  --  (2);
\draw  [decoration={markings,mark=at position 1 with
    {\arrow[scale=1.2,>=stealth]{>}}},postaction={decorate}] (2)  --  (1);
\end{tikzpicture} & $(t_2,t_4,t_6,0) \in H \backslash ((L \bigcap \{t_4 = -\frac{t_2^2}{4}\}) \bigcup \{t_4 = \frac{t_2^2}{12}\})$, \\
$\phi'$ of type $A_3+A_1+A_1$ &  \begin{tikzpicture}[scale=1,  transform shape]
\node (1) at ( 0,0)  {};
\node (2) at ( 2,0)  {};
\draw  [decoration={markings,mark=at position 1 with
    {\arrow[scale=1.2,>=stealth]{>}}},postaction={decorate}] (1)  --  (2);
\draw  [decoration={markings,mark=at position 1 with
    {\arrow[scale=1.2,>=stealth]{>}}},postaction={decorate}] (2)  --  (1);
\end{tikzpicture} & $(t_2,t_4,t_6,0) \in (L \bigcap H) \backslash \{t_4 = -\frac{t_2^2}{4} \}$, \\
$\phi'$ of type $D_4+A_1$ &  \begin{tikzpicture}[scale=1,  transform shape]
\node (1) at ( 0,0)  {};
\node (2) at ( 2,0)  {};
\draw  [decoration={markings,mark=at position 1 with
    {\arrow[scale=1.2,>=stealth]{>}}},postaction={decorate}] (1)  --  (2);
\draw  [decoration={markings,mark=at position 1 with
    {\arrow[scale=1.2,>=stealth]{>}}},postaction={decorate}] (2)  --  (1);
\end{tikzpicture} &  $(t_2,t_4,t_6,0) \in L \bigcap  \{t_4 = -\frac{t_2^2}{4} \neq 0\}$, \\
$\phi'$ of type $A_5$ &  \begin{tikzpicture}[scale=1,  transform shape]
\node (1) at ( 0,0)  {};
\node (2) at ( 2,0)  {};
\draw  [decoration={markings,mark=at position 1 with
    {\arrow[scale=1.2,>=stealth]{>}}},postaction={decorate}] (1)  --  (2);
\draw  [decoration={markings,mark=at position 1 with
    {\arrow[scale=1.2,>=stealth]{>}}},postaction={decorate}] (2)  --  (1);
\end{tikzpicture} &   $(t_2,t_4,t_6,0) \in H \bigcap  \{t_4 = \frac{t_2^2}{12} \neq 0\}$, \\
$\phi'$ of type $D_6$ &  \begin{tikzpicture}[scale=1,  transform shape]
\node (1) at ( 0,0)  {};
\node (2) at ( 2,0)  {};
\draw  [decoration={markings,mark=at position 1 with
    {\arrow[scale=1.2,>=stealth]{>}}},postaction={decorate}] (1)  --  (2);
\draw  [decoration={markings,mark=at position 1 with
    {\arrow[scale=1.2,>=stealth]{>}}},postaction={decorate}] (2)  --  (1);
\end{tikzpicture} &  $(t_2,t_4,t_6,0)=(0,0,0,0)$. \\
\end{tabular}
\end{center}
With Table~\ref{TypesofsingularD4andZ/2Z}, we see that the singular configuration in the fiber of $\overline{\alpha^\Omega}$ above $f^{-1}(\pi'(h'))$ is of the same type as $\phi'$. Therefore for the type $D_4-C_3-D_6$, the map $f$ realizes a bijection between the singular configurations of the fibers of $\overline{\alpha^\Omega}$ and the sub-root systems of $D_5$ containing $\Theta=\{\alpha_1,\alpha_3,\alpha_5\}$.
\medskip

\subsection{Case $D_4-G_2-E_6$}

In this section $\Gamma=\mathcal{D}_2$, $\Gamma'=\mathcal{T}$ and $\Omega=\mathbb{Z}/3\mathbb{Z}=<\rho>$. The restriction $\alpha^\Omega$ of the semiuniversal deformation $\alpha$ of a simple singularity of type $D_4$  is
\begin{center}
\begin{tikzpicture}[scale=1]
\node (1) at ( 0,0) {$X_{\Gamma, \Omega}=\{((x,y,z),(t_2,0,t_6,0)) \in \cc^3 \times  \mathfrak{h}/W \ | \ z^2=xy(x+y)-\frac{t_2}{2}xy+\frac{1}{4}(t_6+\frac{t_2^3}{108})\}$};
\node (2) at ( 0,-2) {$(\mathfrak{h}/W)^\Omega=\{ (t_2,0,t_6,0) \in  \mathfrak{h}/W\}$};
\node (3) at ( 0.5, -1)  {$\alpha^\Omega$};

\draw  [decoration={markings,mark=at position 1 with
    {\arrow[scale=1.2,>=stealth]{>}}},postaction={decorate}] (1)  --  (2);
\end{tikzpicture}
\end{center}
The action of $\Omega$ on a fiber is $\rho.(x,y,z)=(y,-x-y+\frac{t_2}{2},z)$, and so the quotient $\overline{\alpha^\Omega}$ is given by
\begin{center}
\begin{tikzpicture}[scale=1]
\node (1) at ( 0,0) {$ X_{\Gamma, \Omega}/\Omega=\{((X,Y,Z),(t_2,0,t_6,0)) \in \cc^3 \times \mathfrak{h}/W \ | \ 11664X^4-Y^3-Z^2-324t_2X^2Y$};
\node (2) at ( 0,-0.5) {$-(189t_2^3+5832t_6)X^2+(81t_2t_6+\frac{15}{16}t_2^4)Y+\frac{11}{32}t_2^6+\frac{189}{4}t_2^3t_6+729t_6^2=0\}$};
\node (3) at ( 0,-2.5) {$(\mathfrak{h}/W)^\Omega=\{ (t_2,0,t_6,0) \in  \mathfrak{h}/W\}$};
\node (4) at ( 0.5, -1.5)  {$\overline{\alpha^\Omega}$};

\draw  [decoration={markings,mark=at position 1 with
    {\arrow[scale=1.2,>=stealth]{>}}},postaction={decorate}] (2)  --  (3);
\end{tikzpicture}
\end{center}
Before quotient, the discriminant of $\alpha^\Omega$ is given by $\{(t_6+\frac{t_2^3}{108})(t_6-\frac{t_2^3}{108})=0\}$ , and after quotient, it is all of $(\mathfrak{h}/W)^\Omega$ because Theorem~\ref{ThmSingular}. By studying explicitly the singularities in the fibers $(\alpha^\Omega)^{-1}(t_2,0,t_6,0)$ and $(\overline{\alpha^\Omega})^{-1}(t_2,0,t_6,0)$, we obtain the following types :
\medskip

\begin{center}
\renewcommand{\arraystretch}{1.5}
\begin{tabular}{|c|c|c|}
\hline
$(t_2,0,t_6,0)$  & Configuration of $(\alpha^\Omega)^{-1}(t_2,0,t_6,0)$ & $(\overline{\alpha^\Omega})^{-1}(t_2,0,t_6,0)$ \\
\hhline{|=|=|=|}
Generic point & $p_1+p_2$ & $A_2+A_2$ \\
\hline
\multirow{-4}{*}{$t_6=-\frac{t_2^3}{108} \neq 0$} & \begin{tikzpicture}[scale=0.8]
\node (1) at ( 4,0)  {$p_1$};
\node (2) at ( 6,0)  {$p_2$};
\node (3) at ( 3, 0) {$+$};
\node (4) at ( 5, 0) {$+$};
\node (5) at (  - 2,0)  {$A_1$};
\node (6) at ( 0,0)  {$A_1$};
\node (7) at ( 2,0) {$A_1$};
\node (8) at (  0.2, - 0.25) {};
\node (9) at ( 1.8, - 0.25) {};
\node (10) at (  -0.2, - 0.25) {};
\node (11) at ( -1.8, - 0.25) {};
\node (12) at ( 2, 0.35) {};
\node (13) at ( -2, 0.35) {};
\node (14) at ( 1, - 1) {$\Omega$};
\node (15) at ( -1, - 1) {$\Omega$};
\node (16) at ( 0, 1.5) {$\Omega$};
\node (17) at ( -1, 0) {$+$};
\node (18) at ( 1, 0) {$+$};
\draw[->] (8.west) to[out= - 70, in= - 110] (9.east);
\draw[->] (11.west) to[out= - 70, in= - 110] (10.east);
\draw[->] (12.west) to[out=  130, in=  50] (13.east);
\end{tikzpicture} &  \multirow{-4}{*}{$A_2+A_2+A_1$} \\
\hline
$t_6=\frac{t_2^3}{108} \neq 0$ & $A_1$ & $A_5$ \\
\hline
$t_2=t_6= 0$ & $D_4$ & $E_6$ \\
\hline
\end{tabular}
 \captionof{table}{}
  \label{TypesofsingularD4andZ/3Z}
\end{center}
\medskip

\noindent with $p_i$ ($i=1,2$) denoting a smooth point, and if the action of $\Omega$ is not mentioned, the point (smooth or singular) is fixed. \\
\newline

Let $\Pi'=\{\alpha_1,\alpha_2,\alpha_3,\alpha_4,\alpha_5,\alpha_6\}$ be a set of simple roots of the root system of type $E_6$. All the sub-root systems $\phi'$ of $E_6$ containing $\Theta=\{\alpha_1,\alpha_3,\alpha_5,\alpha_6\}$, as well as their realizations are given in the following table : 
\medskip

\begin{center}
\renewcommand{\arraystretch}{1.5}
\begin{tabular}{|c|>{\centering\arraybackslash}p{12cm}|}
\hline
Type of $\phi'$ & Realizations of $\phi'$ \\
\hhline{|=|=|}
$A_2+A_2$ &  $<\alpha_1,\alpha_3,\alpha_5,\alpha_6>$ \\
\hline
 \multirow{2}{*}{$A_2+A_2+A_1$}  & $<\alpha_1,\alpha_3,\alpha_5,\alpha_6, \alpha_2>$, $<\alpha_1,\alpha_3,\alpha_5,\alpha_6, \alpha_1+\alpha_2+2\alpha_3+3\alpha_4+2\alpha_5+\alpha_6>$, $<\alpha_1,\alpha_3,\alpha_5,\alpha_6,  \alpha_1+2\alpha_2+2\alpha_3+3\alpha_4+2\alpha_5+\alpha_6>$  \\
\hline
  \multirow{2}{*}{$A_5$} &  $<\alpha_1,\alpha_3,\alpha_5,\alpha_6, \alpha_4>$, $<\alpha_1,\alpha_3,\alpha_5,\alpha_6,\alpha_2+\alpha_4>$, $<\alpha_1,\alpha_3,\alpha_5,\alpha_6,\alpha_2+\alpha_3+2\alpha_4+\alpha_5>$ \\
\hline
$E_6$  & $<\alpha_1,\alpha_2,\alpha_3,\alpha_4,\alpha_5,\alpha_6>$ \\
\hline
\end{tabular}
 \captionof{table}{}
  \label{realizationsD4andZ/3Z}
\end{center}
\medskip

Set $h'=(\xi_1,\xi_2,\xi_3,\xi_4,\xi_5,\xi_6) \in \mathfrak{h}'$, with the coordinates being expressed in the basis of fundamental coweights $(\omega^\vee_1,\omega^\vee_2,\omega^\vee_3,\omega^\vee_4,\omega^\vee_5,\omega^\vee_6)$. Then $h' \in H_{\alpha_1} \bigcap H_{\alpha_3} \bigcap H_{\alpha_5}  \bigcap H_{\alpha_6}$ if and only if $\xi_1=\xi_3=\xi_5=\xi_6=0$. \\
\newline

Using \cite{Frame51} and \cite{SYS80}, we compute the flat coordinates of $E_6$ restricted to the subspace \\
$H_{\alpha_1} \bigcap H_{\alpha_3} \bigcap H_{\alpha_5} \bigcap H_{\alpha_6}   \ \mathlarger{\mathlarger{\subset}} \  \mathfrak{h}'$ and obtain
\medskip

\begin{center}
\renewcommand{\arraystretch}{1.2}
\raisebox{-.55\height}{$\left\lbrace\begin{array}{ccl}
\psi_2(\xi_2,\xi_4) & = & 2\xi_2^2+6\xi_2\xi_4+6\xi_4^2, \\
\psi_5(\xi_2,\xi_4) & = & 0, \\
\psi_6(\xi_2,\xi_4) & = & -\xi_2^6-9\xi_2^5\xi_4-30\xi_2^4\xi_4^2-45\xi_2^3\xi_4^3-30\xi_2^2\xi_4^4-9\xi_2\xi_4^5-3\xi_4^6, \\
\psi_8(\xi_2,\xi_4) & = & \frac{1}{12}(\xi_2^2+3\xi_2\xi_4+3\xi_4^2)(5\xi_2^6+45\xi_2^5\xi_4+144\xi_2^4\xi_4^2+189\xi_2^3\xi_4^3+72\xi_2^2\xi_4^4-27\xi_2\xi_4^5-9\xi_4^6),  \\
\psi_9(\xi_2,\xi_4) & = & 0, \\
\psi_{12}(\xi_2,\xi_4) & = & \begin{array}[t]{l} \frac{693}{4}\xi_2^2\xi_4^{10}+\frac{189}{2}\xi_2\xi_4^{11}-\frac{2277}{2}\xi_2^5\xi_4^7-\frac{1947}{2}\xi_2^7\xi_4^5-\frac{9}{2}\xi_2^{11}\xi_4-\frac{143}{4}\xi_2^{10}\xi_4^2-165\xi_2^9\xi_4^3 \\ -\frac{1089}{2}\xi_2^4\xi_4^8-\frac{5225}{4}\xi_2^6\xi_4^6+\frac{63}{4}\xi_4^{12}-\frac{1}{4}\xi_2^{12}-\frac{979}{2}\xi_2^8\xi_4^4. \end{array}\\
\end{array}\right.$}
\end{center}
\medskip

\noindent There are the following relations among the flat coordinates : 
\begin{center}
 \renewcommand{\arraystretch}{1.4}
\raisebox{-.55\height}{$\left\lbrace\begin{array}{ccl} 
\psi_8 & = & -\frac{1}{192}\psi_2^4-\frac{1}{4}\psi_2\psi_6, \\
\psi_{12} & = & \frac{1}{1536}\psi_2^6-\frac{1}{8}\psi_6^2+\frac{1}{48}\psi_2^3\psi_6.\\
\end{array}\right.$}
\end{center}

Define 
\begin{center}
 
$\begin{array}[t]{rccc}
f :  & (\mathfrak{h}/W)^\Omega & \rightarrow & \mathfrak{h}'/W' \\
   & \begin{pmatrix}
   t_2  \\
   0 \\
   t_6 \\
   0 
   \end{pmatrix} & \mapsto & \renewcommand{\arraystretch}{1.4}\begin{pmatrix}
   t_2  \\
  0 \\
  -6t_6-\frac{5}{72}t_2^3 \\
  \frac{7}{576}t_2^4+\frac{3}{2}t_2t_6 \\
  0 \\
  -\frac{29}{20736}t_2^6-\frac{9}{2}t_6^2-\frac{11}{48}t_6t_2^3
   \end{pmatrix}
\end{array}$
\end{center}
The map $f$ is an injective morphism, and if $(\psi_2,\psi_5,\psi_6,\psi_8,\psi_9,\psi_{12}) \in \pi'(H_{\alpha_1} \bigcap H_{\alpha_3}\bigcap H_{\alpha_5} \bigcap H_{\alpha_6})$, then by setting 
\begin{center}
\renewcommand{\arraystretch}{1.2}
\raisebox{-.55\height}{$\left\lbrace\begin{array}{l}
t_2=\psi_2, \\
t_6=-\frac{1}{6}\psi_6-\frac{5}{432}\psi_2^3,
\end{array}\right.$}
\end{center}
it follows that $f(t_2,0,t_6,0)=(\psi_2,\psi_5,\psi_6,\psi_8,\psi_9,\psi_{12})$. Hence
\begin{center}
 \begin{tikzpicture}[scale=1,  transform shape]
\node (1) at ( 0,0)  {$f :  (\mathfrak{h}/W)^\Omega$};
\node (2) at ( 5,0)  {$\pi'(H_{\alpha_1} \bigcap H_{\alpha_3}\bigcap H_{\alpha_5} \bigcap H_{\alpha_6})  \ \mathlarger{\mathlarger{\subset}} \  \mathfrak{h}'/W'.$};
\node (3) at ( 1.55,0.2)  {$\cong$};
\draw  [decoration={markings,mark=at position 1 with
    {\arrow[scale=1.2,>=stealth]{>}}},postaction={decorate}] (1)  --  (2);
\end{tikzpicture}
\end{center}

Let $\phi'$ be a sub-root system in Table~\ref{realizationsD4andZ/3Z}, and set $h' \in \bigcap_{\alpha \in \phi'}H_\alpha$. With the preceding expressions, we compute $(t_2,0,t_6,0)=f^{-1}(\pi'(h'))$, and verify the following correspondence : 
\begin{center}
\renewcommand{\arraystretch}{1.5}
\begin{tabular}[h]{lcl}
$\phi'$ of type $A_2+A_2$ &  \begin{tikzpicture}[scale=1,  transform shape]
\node (1) at ( 0,0)  {};
\node (2) at ( 2,0)  {};
\draw  [decoration={markings,mark=at position 1 with
    {\arrow[scale=1.2,>=stealth]{>}}},postaction={decorate}] (1)  --  (2);
\draw  [decoration={markings,mark=at position 1 with
    {\arrow[scale=1.2,>=stealth]{>}}},postaction={decorate}] (2)  --  (1);
\end{tikzpicture} & $(t_2,0,t_6,0)$ generic, \\
$\phi'$ of type $A_2+A_2+A_1$ &  \begin{tikzpicture}[scale=1,  transform shape]
\node (1) at ( 0,0)  {};
\node (2) at ( 2,0)  {};
\draw  [decoration={markings,mark=at position 1 with
    {\arrow[scale=1.2,>=stealth]{>}}},postaction={decorate}] (1)  --  (2);
\draw  [decoration={markings,mark=at position 1 with
    {\arrow[scale=1.2,>=stealth]{>}}},postaction={decorate}] (2)  --  (1);
\end{tikzpicture} & $(t_2,0,t_6,0) \in \{t_6=-\frac{t_2^3}{108} \neq 0\}$, \\
$\phi'$ of type $A_5$ &  \begin{tikzpicture}[scale=1,  transform shape]
\node (1) at ( 0,0)  {};
\node (2) at ( 2,0)  {};
\draw  [decoration={markings,mark=at position 1 with
    {\arrow[scale=1.2,>=stealth]{>}}},postaction={decorate}] (1)  --  (2);
\draw  [decoration={markings,mark=at position 1 with
    {\arrow[scale=1.2,>=stealth]{>}}},postaction={decorate}] (2)  --  (1);
\end{tikzpicture} & $(t_2,0,t_6,0) \in \{t_6=\frac{t_2^3}{108} \neq 0\}$, \\
$\phi'$ of type $E_6$ &  \begin{tikzpicture}[scale=1,  transform shape]
\node (1) at ( 0,0)  {};
\node (2) at ( 2,0)  {};
\draw  [decoration={markings,mark=at position 1 with
    {\arrow[scale=1.2,>=stealth]{>}}},postaction={decorate}] (1)  --  (2);
\draw  [decoration={markings,mark=at position 1 with
    {\arrow[scale=1.2,>=stealth]{>}}},postaction={decorate}] (2)  --  (1);
\end{tikzpicture} & $(t_2,0,t_6,0)=(0,0,0,0)$. \\

\end{tabular}
\end{center}
With Table~\ref{TypesofsingularD4andZ/3Z}, we see that the singular configuration in the fiber of $\overline{\alpha^\Omega}$ above $f^{-1}(\pi'(h'))$ is of the same type as $\phi'$. Hence for $D_4-G_2-E_6$, the map $f$ realizes a bijection between the singular configurations of the fibers of $\overline{\alpha^\Omega}$ and the sub-root systems of $E_6$ containing $\Theta=\{\alpha_1,\alpha_3,\alpha_5,\alpha_6\}$.
\medskip

\subsection{Case $D_4-G_2-E_7$}\label{D_4-G_2-E_7}

In this section $\Gamma=\mathcal{D}_2$, $\Gamma'=\mathcal{O}$ and $\Omega=\mathfrak{S}_3=<\rho,\sigma>$. The restriction $\alpha^\Omega$ above the fixed points is the same as the one in the preceding part, ie
\begin{center}
\begin{tikzpicture}[scale=1]
\node (1) at ( 0,0) {$X_{\Gamma, \Omega}=\{((x,y,z),(t_2,0,t_6,0) \in \cc^3 \times  \mathfrak{h}/W \ | \ z^2=xy(x+y)-\frac{t_2}{2}xy+\frac{1}{4}(t_6+\frac{t_2^3}{108})\}$};
\node (2) at ( 0,-2) {$(\mathfrak{h}/W)^\Omega=\{ (t_2,0,t_6,0) \in  \mathfrak{h}/W\}$};
\node (3) at ( 0.5, -1)  {$\alpha^\Omega$};

\draw  [decoration={markings,mark=at position 1 with
    {\arrow[scale=1.2,>=stealth]{>}}},postaction={decorate}] (1)  --  (2);
\end{tikzpicture}
\end{center}
and the action of $\Omega$ on a fiber is 
\begin{center}
$\sigma.(x,y,z)=(x,-x-y+\frac{t_2}{2},-z)$ and $\rho.(x,y,z)=(y,-x-y+\frac{t_2}{2},z)$.
\end{center}
The quotient $\overline{\alpha^\Omega}$ is then given by
\begin{center}
\begin{tikzpicture}[scale=1]
\node (1) at ( 0,0) {$ X_{\Gamma, \Omega}/\Omega=\{((X,Y,Z),(t_2,0,t_6,0) \in \cc^3 \times \mathfrak{h}/W \ | \ X^3Y-11664Y^3+Z^2+324t_2XY^2$};
\node (2) at ( 0,-0.5) {$+(189t_2^3+5832t_6)Y^2-(\frac{15}{16}t_2^4+81t_2t_6)XY-(\frac{11}{32}t_2^6+\frac{189}{4}t_2^3t_6+729t_6^2)Y=0\}$};
\node (3) at ( 0,-2.5) {$(\mathfrak{h}/W)^\Omega=\{ (t_2,0,t_6,0) \in  \mathfrak{h}/W\}$};
\node (4) at ( 0.5, -1.5)  {$\overline{\alpha^\Omega}$};

\draw  [decoration={markings,mark=at position 1 with
    {\arrow[scale=1.2,>=stealth]{>}}},postaction={decorate}] (2)  --  (3);
\end{tikzpicture}
\end{center}
The discriminant of the deformation before quotient is $\{(t_6+\frac{t_2^3}{108})(t_6-\frac{t_2^3}{108})=0\}$, and after quotient it is $(\mathfrak{h}/W)^\Omega$. The study of the fibers of $\alpha^\Omega$ and $\overline{\alpha^\Omega}$ gives the following singular configurations :  
\medskip

\begin{center}
\renewcommand{\arraystretch}{1.5}
\begin{tabular}{|c|c|c|}
\hline
$(t_2,0,t_6,0)$  & Configuration of $(\alpha^\Omega)^{-1}(t_2,0,t_6,0)$ & $(\overline{\alpha^\Omega})^{-1}(t_2,0,t_6,0)$ \\
\hhline{|=|=|=|}
Generic point & smooth and no $\Omega$-fixed point & $A_2+A_1+A_1+A_1$ \\
\hline
\multirow{-4}{*}{$t_6=-\frac{t_2^3}{108} \neq 0$} & \begin{tikzpicture}[scale=0.8]
\node (1) at (  - 2,0)  {$A_1$};
\node (2) at ( 0,0)  {$A_1$};
\node (3) at ( 2,0) {$A_1$};
\node (4) at (  0.2, - 0.25) {};
\node (5) at ( 1.8, - 0.25) {};
\node (6) at (  -0.2, - 0.25) {};
\node (7) at ( -1.8, - 0.25) {};
\node (8) at ( 2, 0.35) {};
\node (9) at ( 2, 0.15) {};
\node (10) at ( -2, 0.35) {};
\node (11) at ( 0, 0.15) {};
\node (12) at ( -2.1, 0) {};
\node (13) at ( 1, - 1) {$\rho$};
\node (14) at ( -1, - 1) {$\rho$};
\node (15) at ( 0, 1.5) {$\rho$};
\node (16) at ( -3.2, 0) {$\sigma$};
\node (17) at ( 1, 0.7) {$\sigma$};
\node (18) at ( -1, 0) {$+$};
\node (19) at ( 1, 0) {$+$};
\draw[->] (4.west) to[out= - 70, in= - 110] (5.east);
\draw[->] (7.west) to[out= - 70, in= - 110] (6.east);
\draw[->] (8.west) to[out=  130, in=  50] (10.east);
\draw[<->] (9.west) to[out=  130, in=  50] (11.east);
\draw[->] (12) to[out=  135, in=  225, looseness=10] (12);
\end{tikzpicture} &  \multirow{-4}{*}{$A_3+A_2+A_1$} \\
\hline
$t_6=\frac{t_2^3}{108} \neq 0$ & $A_1$ ($\Omega$-fixed) & $D_5+A_1$ \\
\hline
$t_2=t_6=0$ & $D_4$ ($\Omega$-fixed) & $E_7$ \\
\hline
\end{tabular}
 \captionof{table}{}
  \label{TypesofsingularD4andS3}
\end{center}
\medskip

Let $\Pi'=\{\alpha_1,\alpha_2,\alpha_3,\alpha_4,\alpha_5,\alpha_6,\alpha_7\}$ be a set of simple roots of the root system of type $E_7$. All the sub-root systems $\phi'$ of $E_7$ containing $\Theta=\{\alpha_1,\alpha_2,\alpha_3,\alpha_5,\alpha_7\}$, as well as their realizations are given in the following table : 
\medskip

\begin{center}
\renewcommand{\arraystretch}{1.5}
\begin{longtable}{|c|>{\centering\arraybackslash}p{12cm}|}
\hline
Type of $\phi'$ & Realizations of $\phi'$ \\
\hhline{|=|=|}
$A_2+A_1+A_1+A_1$ &  $<\alpha_1,\alpha_2,\alpha_3,\alpha_5,\alpha_7>$ \\
\hline
 \multirow{2}{*}{$A_3+A_2+A_1$}  & $<\alpha_1,\alpha_2,\alpha_3,\alpha_5,\alpha_7,\alpha_6>$, $<\alpha_1,\alpha_2,\alpha_3,\alpha_5,\alpha_7,\alpha_1+\alpha_2+2\alpha_3+3\alpha_4+2\alpha_5+2\alpha_6+\alpha_7>$   \\
\hline
  \multirow{2}{*}{$D_5+A_1$} &  $<\alpha_1,\alpha_2,\alpha_3,\alpha_5,\alpha_7,\alpha_4>$, $<\alpha_1,\alpha_2,\alpha_3,\alpha_5,\alpha_7,\alpha_4+\alpha_5+\alpha_6>$, $<\alpha_1,\alpha_2,\alpha_3,\alpha_5,\alpha_7,\alpha_2+\alpha_3+2\alpha_4+\alpha_5+\alpha_6>$ \\
\hline
$E_7$  & $<\alpha_1,\alpha_2,\alpha_3,\alpha_4,\alpha_5,\alpha_6,\alpha_7>$ \\
\hline
\end{longtable}
{\addtocounter{table}{-1}}
 \captionof{table}{}
  \label{realizationsD4andS3}
\end{center}
\medskip

Set $\mathfrak{h}'=\{ (\xi_1,\xi_2,\xi_3,\xi_4,\xi_5,\xi_6,\xi_7,\xi_8) \in \cc^8 \ | \ \xi_7+\xi_8=0\}$, $(e_1,...,e_8)$ the canonical basis of $\cc^8$, and $(\epsilon_1,...,\epsilon_8)$ its dual basis. Following \cite{Bou68}, the simple roots of $E_7$ are 
\medskip

\begin{center}
\renewcommand{\arraystretch}{1.2}
\raisebox{-.55\height}{$\left\lbrace\begin{array}{ccl}
\alpha_1 & = & \frac{1}{2}(\epsilon_1+\epsilon_8)- \frac{1}{2}(\epsilon_2+\epsilon_3+\epsilon_4+\epsilon_5+\epsilon_6+\epsilon_7), \\
\alpha_2 & = & \epsilon_1+\epsilon_2, \\
\alpha_3 & = & \epsilon_2-\epsilon_1, \\
\alpha_4 & = & \epsilon_3-\epsilon_2, \\
\alpha_5 & = & \epsilon_4-\epsilon_3, \\
\alpha_6 & = & \epsilon_5-\epsilon_4, \\
\alpha_7 & = & \epsilon_6-\epsilon_5.
\end{array}\right.$}
\end{center}
\medskip

Set $h'=(\xi_1,\xi_2,\xi_3,\xi_4,\xi_5,\xi_6,\xi_7) \in \mathfrak{h}'$ (we write here $(\xi_1,\xi_2,\xi_3,\xi_4,\xi_5,\xi_6,\xi_7)$ for $(\xi_1,\xi_2,\xi_3,\xi_4,\xi_5,\xi_6,\xi_7,-\xi_7)$). Then $h' \in H_{\alpha_1} \bigcap H_{\alpha_2} \bigcap H_{\alpha_3} \bigcap H_{\alpha_5}  \bigcap H_{\alpha_7}$ if and only if $\xi_1=\xi_2=0$, $\xi_3=\xi_4$, $\xi_5=\xi_6$ and $\xi_7=-\xi_3-\xi_5$. \\
\newline

Using \cite{Abria09} and \cite{Mehta88}, one can compute the flat coordinates of $E_7$ restricted to the subspace \\
$H_{\alpha_1}  \bigcap H_{\alpha_2} \bigcap H_{\alpha_3} \bigcap H_{\alpha_5} \bigcap H_{\alpha_7}   \ \mathlarger{\mathlarger{\subset}} \  \mathfrak{h}'$. We will therefore look at the flat coordinates as functions of $\xi_3$ and $\xi_5$. The expressions are
\medskip


\begin{center}
\renewcommand{\arraystretch}{1.2}
\raisebox{-.55\height}{$\left\lbrace\begin{array}{ccl}
\psi_2(\xi_3,\xi_5) & = & \frac{2}{5}(\xi_3^2+\xi_3\xi_5+\xi_5^2), \\
\psi_6(\xi_3,\xi_5) & = & \frac{32176}{225}\xi_3^6+\frac{32176}{75}\xi_3^5\xi_5+\frac{53552}{75}\xi_3^4\xi_5^2+\frac{160432}{225}\xi_3^3\xi_5^3+\frac{53552}{75}\xi_3^2\xi_5^4+\frac{32176}{75}\xi_3\xi_5^5+\frac{32176}{225}\xi_5^6, \\
\psi_8(\xi_3,\xi_5) & = &\begin{array}[t]{l} \frac{16}{30375}(\xi_3^2+\xi_3\xi_5+\xi_5^2)(550819\xi_3^6+1652457\xi_3^5\xi_5+1389264\xi_3^4\xi_5^2+24433\xi_3^3\xi_5^3 \\ +1389264\xi_3^2\xi_5^4 +1652457\xi_3\xi_5^5+550819\xi_5^6), \end{array} \\
\psi_{10}(\xi_3,\xi_5) & = & \begin{array}[t]{l} \frac{96}{109375}(20743\xi_3^6+62229\xi_3^5\xi_5+41208\xi_3^4\xi_5^2-21299\xi_3^3\xi_5^3+41208\xi_3^2\xi_5^4+62229\xi_3\xi_5^5 \\ +20743\xi_5^6)(\xi_3^2  +\xi_3\xi_5+\xi_5^2)^2, \end{array}  \\
\psi_{12}(\xi_3,\xi_5) & = & \begin{array}[t]{l} -\frac{42062501701}{398671875}\xi_3^8\xi_5^4-\frac{4423023418}{102515625}\xi_3^9\xi_5^3-\frac{62899959716}{5980078125}\xi_3^{10}\xi_5^2-\frac{347826674932}{1993359375}\xi_3^7\xi_5^5 \\ -\frac{3081278138}{17940234375}\xi_3^{12}  -\frac{3081278138}{17940234375}\xi_5^{12}-\frac{175228928248}{854296875}\xi_3^6\xi_5^6-\frac{6162556276}{5980078125}\xi_3^{11}\xi_5 \\ -\frac{347826674932}{1993359375}\xi_3^5\xi_5^7  -\frac{42062501701}{398671875}\xi_3^4\xi_5^8  -\frac{4423023418}{102515625}\xi_3^3\xi_5^9-\frac{62899959716}{5980078125}\xi_3^2\xi_5^{10} \\ -\frac{6162556276}{5980078125}\xi_3\xi_5^{11} , \end{array}\\
\psi_{14}(\xi_3,\xi_5) & = & \begin{array}[t]{l} -\frac{4}{30903847734375}(\xi_3^2+\xi_3\xi_5+\xi_5^2)(1511960253367\xi_3^{12}+9071761520202\xi_3^{11}\xi_5 \\ +67786465629432\xi_3^{10}\xi_5^2+255774514211975\xi_3^9\xi_5^3+617323843488330\xi_3^8\xi_5^4 \\ +1034437665403692\xi_3^7\xi_5^5+1226835303782847\xi_3^6\xi_5^6+1034437665403692\xi_3^5\xi_5^7 \\ +617323843488330\xi_3^4\xi_5^8+255774514211975\xi_3^3\xi_5^9+67786465629432\xi_3^2\xi_5^{10} \\ +9071761520202\xi_3\xi_5^{11}+1511960253367\xi_5^{12}), \end{array}\\
\end{array}\right.$}
\raisebox{-.55\height}{$\left\lbrace\begin{array}{ccl}
\psi_{18}(\xi_3,\xi_5) & = & \begin{array}[t]{l} -\frac{49900582548245699977888}{128185297421220703125}\xi_3^{18} -\frac{49900582548245699977888}{128185297421220703125}\xi_5^{18}-\frac{1808994581776446325173376}{42728432473740234375}\xi_3^3\xi_5^{15} \\ -\frac{43351951625476282697248}{2848562164916015625}\xi_3^2\xi_5^{16}-\frac{49900582548245699977888}{14242810824580078125}\xi_3\xi_5^{17}  -\frac{49900582548245699977888}{14242810824580078125}\xi_3^{17}\xi_5 \\ -\frac{43351951625476282697248}{2848562164916015625}\xi_3^{16}\xi_5^2-\frac{1808994581776446325173376}{42728432473740234375}\xi_3^{15}\xi_5^3  -\frac{1224969840491929611874048}{14242810824580078125}\xi_3^{14}\xi_5^4 \\ -\frac{283014291225008940645632}{2034687260654296875}\xi_3^{13}\xi_5^5-\frac{8202907266598286263520384}{42728432473740234375}\xi_3^{12}\xi_5^6  -\frac{3383893531113795266600128}{14242810824580078125}\xi_3^{11}\xi_5^7 \\ -\frac{349873020975151098628384}{1294800984052734375}\xi_3^{10}\xi_5^8  -\frac{3288297534494448854110112}{11653208856474609375}\xi_3^9\xi_5^9-\frac{349873020975151098628384}{1294800984052734375}\xi_3^8\xi_5^{10} \\ -\frac{3383893531113795266600128}{14242810824580078125}\xi_3^7\xi_5^{11}-\frac{8202907266598286263520384}{42728432473740234375}\xi_3^6\xi_5^{12}-\frac{283014291225008940645632}{2034687260654296875}\xi_3^5\xi_5^{13} \\ -\frac{1224969840491929611874048}{14242810824580078125}\xi_3^4\xi_5^{14}. \end{array}\\
\end{array}\right.$}
\end{center}
\medskip

\noindent We can verify that 
\begin{center}
\renewcommand{\arraystretch}{1.4}
\raisebox{-.80\height}{$\left\lbrace\begin{array}{ccl} 
\psi_8 & = & -\frac{2252645}{81}\psi_2^4+\frac{473}{27}\psi_2\psi_6, \\
\psi_{10} & = & -\frac{557383}{105}\psi_2^5+\frac{111}{35}\psi_2^2\psi_6,\\
\psi_{12} & = & -\frac{43251895481}{24494400}\psi_2^6+\frac{1079173}{1360800}\psi_2^3\psi_6-\frac{1}{103680}\psi_6^2,\\
\psi_{14} & = & -\frac{573683065303}{145496736}\psi_2^7+\frac{10112840293}{4688228160}\psi_2^4\psi_6-\frac{17821}{89299584}\psi_2\psi_6^2,\\
\psi_{18} & = & -\frac{15896711538141155833}{4023348492240}\psi_2^9+\frac{391876556269181513}{64820614597200}\psi_2^6\psi_6-\frac{7868764351687}{3601145255400}\psi_2^3\psi_6^2-\frac{5}{419904}\psi_6^3.\\
\end{array}\right.$}
\end{center}
\medskip

Define 
\begin{center}
 
$\begin{array}[t]{rccc}
f :  & (\mathfrak{h}/W)^\Omega & \rightarrow & \mathfrak{h}'/W' \\
   & \begin{pmatrix}
   t_2  \\
   0 \\
   t_6 \\
   0
   \end{pmatrix} & \mapsto & \renewcommand{\arraystretch}{1.4}\begin{pmatrix}
   t_2  \\
   \frac{18610}{9}t_2^3+18000t_6 \\
   \frac{2044595}{243}t_2^4+\frac{946000}{3}t_2t_6 \\
   \frac{6247}{5}t_2^5+\frac{399600}{7}t_2^2t_6 \\
   -\frac{877545367}{5248800}t_2^6+\frac{30746815}{2268}t_2^3t_6-3125t_6^2 \\
   -\frac{4251411945217}{12658216032}t_2^7+\frac{42570028475}{1775844}t_2^4t_6-\frac{556906250}{8613}t_2t_6^2 \\
   -\frac{134750219913739937987}{150013422353520}t_2^9-\frac{277874830706221330}{4910652621}t_2^6t_6-\frac{488086012279345000}{666878751}t_2^3t_6^2-\frac{625000000}{9}t_6^3
   \end{pmatrix}
\end{array}$
\end{center}
The morphism $f$ is injective and if $(\psi_2,\psi_6,\psi_8,\psi_{10},\psi_{12},\psi_{14},\psi_{18}) \in \pi'(H_{\alpha_1} \bigcap H_{\alpha_2} \bigcap H_{\alpha_3}\bigcap H_{\alpha_5} \bigcap H_{\alpha_7})$, then by setting
\begin{center}
 \renewcommand{\arraystretch}{1.2}
\raisebox{-.55\height}{$\left\lbrace\begin{array}{l}
t_2=\psi_2, \\
t_6=\frac{1}{18000}\psi_6-\frac{1861}{16200}\psi_2^3,
\end{array}\right.$}
\end{center}
it follows that $f(t_2,0,t_6,0)=(\psi_2,\psi_6,\psi_8,\psi_{10},\psi_{12},\psi_{14},\psi_{18})$. Hence
\begin{center}
 \begin{tikzpicture}[scale=1,  transform shape]
\node (1) at ( 0,0)  {$f :  (\mathfrak{h}/W)^\Omega$};
\node (2) at ( 6,0)  {$\pi'(H_{\alpha_1} \bigcap H_{\alpha_2} \bigcap H_{\alpha_3}\bigcap H_{\alpha_5} \bigcap H_{\alpha_7})  \ \mathlarger{\mathlarger{\subset}} \  \mathfrak{h}'/W'.$};
\node (3) at (1.8,0.2)  {$\cong$};
\draw  [decoration={markings,mark=at position 1 with
    {\arrow[scale=1.2,>=stealth]{>}}},postaction={decorate}] (1)  --  (2);
\end{tikzpicture}
\end{center}

Let $\phi'$ be a sub-root system in Table~\ref{realizationsD4andS3}, and set $h' \in \bigcap_{\alpha \in \phi'}H_\alpha$. With the preceding expressions, we compute $(t_2,0,t_6,0)=f^{-1}(\pi'(h'))$, and verify the following correspondence : 

\begin{center}
\renewcommand{\arraystretch}{1.5}
\begin{tabular}[h]{lcl}
$\phi'$ of type $A_2+A_1+A_1+A_1$ &  \begin{tikzpicture}[scale=1,  transform shape]
\node (1) at ( 0,0)  {};
\node (2) at ( 2,0)  {};
\draw  [decoration={markings,mark=at position 1 with
    {\arrow[scale=1.2,>=stealth]{>}}},postaction={decorate}] (1)  --  (2);
\draw  [decoration={markings,mark=at position 1 with
    {\arrow[scale=1.2,>=stealth]{>}}},postaction={decorate}] (2)  --  (1);
\end{tikzpicture} & $(t_2,0,t_6,0)$ generic, \\
$\phi'$ of type $A_3+A_2+A_1$ &  \begin{tikzpicture}[scale=1,  transform shape]
\node (1) at ( 0,0)  {};
\node (2) at ( 2,0)  {};
\draw  [decoration={markings,mark=at position 1 with
    {\arrow[scale=1.2,>=stealth]{>}}},postaction={decorate}] (1)  --  (2);
\draw  [decoration={markings,mark=at position 1 with
    {\arrow[scale=1.2,>=stealth]{>}}},postaction={decorate}] (2)  --  (1);
\end{tikzpicture} & $(t_2,0,t_6,0) \in \{t_6=-\frac{t_2^3}{108} \neq 0\}$, \\
$\phi'$ of type $D_5+A_1$ &  \begin{tikzpicture}[scale=1,  transform shape]
\node (1) at ( 0,0)  {};
\node (2) at ( 2,0)  {};
\draw  [decoration={markings,mark=at position 1 with
    {\arrow[scale=1.2,>=stealth]{>}}},postaction={decorate}] (1)  --  (2);
\draw  [decoration={markings,mark=at position 1 with
    {\arrow[scale=1.2,>=stealth]{>}}},postaction={decorate}] (2)  --  (1);
\end{tikzpicture} & $(t_2,0,t_6,0)  \in \{t_6=\frac{t_2^3}{108} \neq 0\}$, \\
$\phi'$ of type $E_6$ &  \begin{tikzpicture}[scale=1,  transform shape]
\node (1) at ( 0,0)  {};
\node (2) at ( 2,0)  {};
\draw  [decoration={markings,mark=at position 1 with
    {\arrow[scale=1.2,>=stealth]{>}}},postaction={decorate}] (1)  --  (2);
\draw  [decoration={markings,mark=at position 1 with
    {\arrow[scale=1.2,>=stealth]{>}}},postaction={decorate}] (2)  --  (1);
\end{tikzpicture} & $(t_2,0,t_6,0)=(0,0,0,0)$. \\

\end{tabular}
\end{center}

With Table~\ref{TypesofsingularD4andS3}, we see that the singular configuration in the fiber of $\overline{\alpha^\Omega}$ above $f^{-1}(\pi'(h'))$ is of the same type as $\phi'$. Hence for $D_4-G_2-E_7$, the map $f$ realizes a bijection between the singular configurations of the fibers of $\overline{\alpha^\Omega}$ and the sub-root systems of $E_7$ containing $\Theta=\{\alpha_1,\alpha_2,\alpha_3,\alpha_5,\alpha_7\}$.
\medskip

\subsection{Case $E_6-F_4-E_7$}

Set $\Gamma=\mathcal{T}$, $\Gamma'=\mathcal{O}$ and $\Omega=\mathbb{Z}/2\mathbb{Z}=<\sigma>$. According to \cite{Cara17}, a semiuniversal deformation $\alpha^\Omega$ of a simple singularity of type $F_4$ is given by
\begin{center}
\begin{tikzpicture}[scale=1]
\node (1) at ( 0,0) {$X_{\Gamma, \Omega}=\{((x,y,z),(t_2,0,t_6,t_8,0,t_{12}) \in \cc^3 \times  \mathfrak{h}/W \ | \ -\frac{1}{4}x^4+y^3+z^2-\frac{t_2}{4}x^2y+\frac{1}{48}(t_6-\frac{t_2^3}{8})x^2$};
\node (2) at ( 0,-0.5) {$+\frac{1}{48}(-t_8+\frac{t_6t_2}{4}-\frac{t_2^4}{192})y+\frac{1}{576}(t_{12}-\frac{t_8t_2^2}{8}-\frac{t_6^2}{8}+\frac{t_6t_2^3}{96})=0\}$};
\node (3) at ( 0,-2.5) {$(\mathfrak{h}/W)^\Omega=\{ (t_2,0,t_6,t_8,0,t_{12}) \in  \mathfrak{h}/W\}$};
\node (4) at ( 0.5, -1.5)  {$\alpha^\Omega$};

\draw  [decoration={markings,mark=at position 1 with
    {\arrow[scale=1.2,>=stealth]{>}}},postaction={decorate}] (2)  --  (3);
\end{tikzpicture}
\end{center}
Furthermore, the action of $\Omega$ on a fiber of $\alpha^\Omega$ is $\sigma.(x,y,z)=(-x,y,-z)$ and the quotient $\overline{\alpha^\Omega}$ is
\begin{center}
\begin{tikzpicture}[scale=1]
\node (1) at ( 0,0) {$X_{\Gamma, \Omega}/\Omega=\{((X,Y,Z),(t_2,0,t_6,t_8,0,t_{12})) \in \cc^3 \times  \mathfrak{h}/W \ | \ -\frac{1}{4}X^3+XY^3+Z^2-\frac{t_2}{4}X^2Y+\frac{1}{48}(t_6-\frac{t_2^3}{8})X^2$};
\node (2) at ( 0,-0.5) {$+\frac{1}{48}(-t_8+\frac{t_6t_2}{4}-\frac{t_2^4}{192})XY+\frac{1}{576}(t_{12}-\frac{t_8t_2^2}{8}-\frac{t_6^2}{8}+\frac{t_6t_2^3}{96})X=0\}$};
\node (3) at ( 0,-2.5) {$(\mathfrak{h}/W)^\Omega=\{ (t_2,0,t_6,t_8,0,t_{12}) \in  \mathfrak{h}/W\}$};
\node (4) at ( 0.5, -1.5)  {$\overline{\alpha^\Omega}$};

\draw  [decoration={markings,mark=at position 1 with
    {\arrow[scale=1.2,>=stealth]{>}}},postaction={decorate}] (2)  --  (3);
\end{tikzpicture}
\end{center}
Before quotient, the discriminant of $\alpha^\Omega$ is $\mathcal{H}_1 \bigcup \mathcal{H}_2$ with 
\begin{itemize}
\item $\mathcal{H}_1=\{t_2^{12}-144t_2^9t_6+576t_2^8t_8+5184t_2^6t_6^2-13824t_2^5t_6t_8-138240t_2^4t_8^2-69120t_2^3t_6^3-331776t_{12}t_2^3t_6+829440t_2^2t_6^2t_8+3981312t_{12}t_2^2t_8-5308416t_2t_6t_8^2-248832t_6^4+3981312t_{12}t_6^2+7077888t_8^3-15925248t_{12}^2=0\}$, \\
\item $\mathcal{H}_2=\{t_2^{12}+144t_2^9t_6+576t_2^8t_8+5184t_2^6t_6^2+13824t_2^5t_6t_8-138240t_2^4t_8^2+69120t_2^3t_6^3-331776t_{12}t_2^3t_6+829440t_2^2t_6^2t_8-3981312t_{12}t_2^2t_8+5308416t_2t_6t_8^2-248832t_6^4-3981312t_{12}t_6^2+7077888t_8^3-15925248t_{12}^2=0 \ \mathrm{and} \  t_2 \neq 0\}$.
\end{itemize}
After quotient, it is known from Theorem~\ref{ThmSingular} that all of $(\mathfrak{h}/W)^\Omega$ composes the discriminant. By studying explicitly the singularities in the fibers $(\alpha^\Omega)^{-1}(t_2,0,t_6,t_8,0,t_{12})$ and $(\overline{\alpha^\Omega})^{-1}(t_2,0,t_6,t_8,0,t_{12})$, we obtain the following types : 
\medskip

\begin{center}
\begin{adjustwidth}{-1cm}{-1cm}
\renewcommand{\arraystretch}{1.8}
\begin{longtable}{|c|c|c|}
\hline
$(t_2,0,t_6,t_8,0,t_{12})$  & Configuration of $(\alpha^\Omega)^{-1}(t_2,0,t_6,t_8,0,t_{12})$ & $(\overline{\alpha^\Omega})^{-1}(t_2,0,t_6,t_8,0,t_{12})$ \\
\hhline{|=|=|=|}
Generic point & $p_1+p_2+p_3$ & $A_1+A_1+A_1$ \\
\hline
$\mathcal{H}_1$ & $A_1+p$ & $A_3+A_1$ \\
\hline
$\mathcal{H}_1 \bigcap \{t_8=\frac{1}{96}\frac{t_2^6+96t_6^2}{t_2^2}\}$ & $A_3+p$ & $D_4+A_1$ \\
\hline
$\mathcal{H}_1 \bigcap \{t_8=\frac{1}{96}\frac{t_2^6+96t_6^2}{t_2^2}\} \bigcap \{t_2^3+8t_6=0\}$ & $A_5+p$ & $D_5+A_1$ \\
\hline
$\mathcal{H}_1 \bigcap \{t_8=\frac{1}{96}\frac{t_2^6+96t_6^2}{t_2^2}\} \bigcap \{t_2^3-8t_6=0\}$ & $D_4$ & $D_6$ \\
\hline
$\mathcal{H}_1 \bigcap \{t_8=-\frac{1}{192}t_2^4+\frac{1}{4}t_2t_6\}$ & $A_2$ & $A_5$ \\
\hline
\multirow{-2}{*}{$\mathcal{H}_2 \backslash \mathcal{H}_1$} & \begin{tikzpicture}[scale=0.8]
\node (1) at ( 0,0)  {$A_1$};
\node (2) at ( 1, 0) {$+$};
\node (3) at ( 2,0) {$A_1$};
\node (4) at ( 2.5, 0) {$+$};
\node (5) at ( 3, 0) {$p_1$};
\node (6) at ( 3.5, 0) {$+$};
\node (7) at ( 4, 0) {$p_2$};
\node (8) at ( 4.5, 0) {$+$};
\node (9) at ( 5, 0) {$p_3$};
\node (10) at ( 1.8, - 0.25) {};
\node (11) at (  0.2, - 0.25) {};
\node (12) at ( 1, -1.1) {$\Omega$};
\draw[<->] (11.west) to[out= - 70, in= - 110] (10.east);
\end{tikzpicture} & \multirow{-2}{*}{$A_1+A_1+A_1+A_1$} \\
\hline
\multirow{-2}{*}{$\mathcal{H}_2 \bigcap \{t_8 = -\frac{1}{192}t_2^4-\frac{1}{4}t_2t_6\}$} & \begin{tikzpicture}[scale=0.8]
\node (1) at ( 0,0)  {$A_2$};
\node (2) at ( 1, 0) {$+$};
\node (3) at ( 2,0) {$A_2$};
\node (4) at ( 2.5, 0) {$+$};
\node (5) at ( 3, 0) {$p_1$};
\node (6) at ( 3.5, 0) {$+$};
\node (7) at ( 4, 0) {$p_2$};
\node (8) at ( 4.5, 0) {$+$};
\node (9) at ( 5, 0) {$p_3$};
\node (10) at ( 1.8, - 0.25) {};
\node (11) at (  0.2, - 0.25) {};
\node (12) at ( 1, -1.1) {$\Omega$};
\draw[<->] (11.west) to[out= - 70, in= - 110] (10.east);
\end{tikzpicture} & \multirow{-2}{*}{$A_2+A_1+A_1+A_1$} \\
\hline
\multirow{-2}{*}{$\mathcal{H}_1 \bigcap\mathcal{H}_2$} & \begin{tikzpicture}[scale=0.8]
\node (1) at ( 0,0)  {$A_1$};
\node (2) at ( 1, 0) {$+$};
\node (3) at ( 2,0) {$A_1$};
\node (4) at ( 2.5, 0) {$+$};
\node (5) at ( 3, 0) {$A_1$};
\node (6) at ( 3.5, 0) {$+$};
\node (7) at ( 4, 0) {$p$};
\node (8) at ( 1.8, - 0.25) {};
\node (9) at (  0.2, - 0.25) {};
\node (10) at ( 1, -1.1) {$\Omega$};
\draw[<->] (9.west) to[out= - 70, in= - 110] (8.east);
\end{tikzpicture} & \multirow{-2}{*}{$A_3+A_1+A_1$} \\
\hline
\multirow{-2}{*}{$\mathcal{H}_1 \bigcap\mathcal{H}_2 \bigcap \{t_8 = -\frac{1}{192}t_2^4-\frac{1}{4}t_2t_6\}$} & \begin{tikzpicture}[scale=0.8]
\node (1) at ( 0,0)  {$A_2$};
\node (2) at ( 1, 0) {$+$};
\node (3) at ( 2,0) {$A_2$};
\node (4) at ( 2.5, 0) {$+$};
\node (5) at ( 3, 0) {$A_1$};
\node (6) at ( 3.5, 0) {$+$};
\node (7) at ( 4, 0) {$p$};
\node (8) at ( 1.8, - 0.25) {};
\node (9) at (  0.2, - 0.25) {};
\node (10) at ( 1, -1.1) {$\Omega$};
\draw[<->] (9.west) to[out= - 70, in= - 110] (8.east);
\end{tikzpicture}  & \multirow{-2}{*}{$A_3+A_2+A_1$} \\
\hline
\multirow{-2}{*}{$\mathcal{H}_1 \bigcap\mathcal{H}_2 \bigcap \{t_8 = -\frac{1}{192}t_2^4+\frac{1}{4}t_2t_6\}$} & \begin{tikzpicture}[scale=0.8]
\node (1) at ( -2,0)  {$A_2$};
\node (2) at ( -1, 0) {$+$};
\node (3) at ( 0,0)  {$A_1$};
\node (4) at ( 1, 0) {$+$};
\node (5) at ( 2,0) {$A_1$};
\node (6) at ( 1.8, - 0.25) {};
\node (7) at (  0.2, - 0.25) {};
\node (8) at ( 1, -1.1) {$\Omega$};
\draw[<->] (7.west) to[out= - 70, in= - 110] (6.east);
\end{tikzpicture}  & \multirow{-2}{*}{$A_5+A_1$} \\
\hline
$t_2=t_6=t_8=t_{12}=0$ & $E_6$ & $E_7$ \\
\hline
\end{longtable}
{\addtocounter{table}{-1}}
  \end{adjustwidth}
 \captionof{table}{}
  \label{TypesofsingularE6}
\end{center}
\medskip

\noindent with $p_i$ ($i=1,2,3, \emptyset$) denoting a smooth point, and if the action of $\Omega$ is not mentioned, the point (smooth or singular) is fixed. \\
\newline

Let $\Pi'=\{\alpha_1,\alpha_2,\alpha_3,\alpha_4,\alpha_5,\alpha_6,\alpha_7\}$ be a set of simple roots of the root system of type $E_7$. All the sub-root systems $\phi'$ of $E_7$ containing $\Theta=\{\alpha_2,\alpha_5,\alpha_7\}$, as well as their realizations are given in the following table (except in the first case $A_1+A_1+A_1$ and the last one $E_7$, we will not repeat that $\phi'$ contains $\Theta$, for the sake of readability) : 
\medskip

{\footnotesize
\begin{center}
\renewcommand{\arraystretch}{1.5}
\begin{longtable}{|c|>{\centering\arraybackslash}p{12cm}|}

\hline
Type of $\phi'$ & Realizations of $\phi'$ \\
\hhline{|=|=|}
$A_1+A_1+A_1$ &  $<\alpha_2,\alpha_5,\alpha_7>$ \\
\hline
 \multirow{2.5}{*}{$A_3+A_1$}  & $<\alpha_4>$, $<\alpha_3+\alpha_4>$, $<\alpha_1+\alpha_3+\alpha_4>$, $<\alpha_1+\alpha_2+2\alpha_3+3\alpha_4+2\alpha_5+2\alpha_6+\alpha_7>$, $<\alpha_4+\alpha_5+\alpha_6>$, $<\alpha_3+\alpha_4+\alpha_5+\alpha_6>$, $<\alpha_1+\alpha_3+\alpha_4+\alpha_5+\alpha_6>$, $<\alpha_1+\alpha_2+2\alpha_3+3\alpha_4+2\alpha_5+\alpha_6>$, $<\alpha_6>$, $<\alpha_2+\alpha_3+2\alpha_4+\alpha_5+\alpha_6>$, $<\alpha_1+\alpha_2+\alpha_3+2\alpha_4+\alpha_5+\alpha_6>$, $<\alpha_1+\alpha_2+2\alpha_3+2\alpha_4+\alpha_5+\alpha_6>$ \\
\hline
   \multirow{5.5}{*}{$D_4+A_1$} & $<\alpha_1 , \alpha_3+\alpha_4>$,  $< \alpha_1 , \alpha_3+\alpha_4+\alpha_5+\alpha_6 >$,  $< \alpha_1 ,\alpha_2+ \alpha_3+2\alpha_4+\alpha_5+\alpha_6  >$,   $<\alpha_3 ,\alpha_4>$,   $<\alpha_3 , \alpha_4+\alpha_5+\alpha_6>$,  $< \alpha_3 ,\alpha_1+\alpha_2+ \alpha_3+2\alpha_4+\alpha_5+\alpha_6  >$,  $< \alpha_1+\alpha_3, \alpha_4>$,  $< \alpha_1+\alpha_3,\alpha_4+\alpha_5+\alpha_6 >$,  $< \alpha_1+\alpha_3,\alpha_2+ \alpha_3+2\alpha_4+\alpha_5+\alpha_6 >$,  $<\alpha_1+\alpha_2+ \alpha_3+2\alpha_4+\alpha_5 , \alpha_3 + \alpha_4+\alpha_5+\alpha_6>$,  $<\alpha_1+\alpha_2+ \alpha_3+2\alpha_4+\alpha_5 ,\alpha_6 >$,  $< \alpha_1+\alpha_2+ 2\alpha_3+2\alpha_4+\alpha_5, \alpha_4+\alpha_5+\alpha_6 >$,  $< \alpha_1+\alpha_2+ 2\alpha_3+2\alpha_4+\alpha_5,\alpha_6 >$,  $<\alpha_2+ \alpha_3+2\alpha_4+2\alpha_5+2\alpha_6+\alpha_7 , \alpha_1+\alpha_3+\alpha_4 >$,  $< \alpha_1+\alpha_2+ \alpha_3+2\alpha_4+2\alpha_5+2\alpha_6+\alpha_7 , \alpha_3+\alpha_4 >$,  $<  \alpha_1+\alpha_2+ 2\alpha_3+2\alpha_4+2\alpha_5+2\alpha_6+\alpha_7 ,\alpha_4 >$,  $< \alpha_2+ \alpha_3+2\alpha_4+\alpha_5,\alpha_1+ \alpha_3+\alpha_4+\alpha_5 +\alpha_6 >$,  $<\alpha_2+ \alpha_3+2\alpha_4+\alpha_5 ,\alpha_6 >$ \\
\hline
   \multirow{4.5}{*}{$D_5+A_1$} & $<\alpha_1, \alpha_3+\alpha_4,  \alpha_2+\alpha_3+ 2\alpha_4+2\alpha_5+2\alpha_6+\alpha_7>$, $<\alpha_1, \alpha_3+\alpha_4+ \alpha_5+\alpha_6, \alpha_2+\alpha_3+ 2\alpha_4+\alpha_5>$, $<\alpha_1, \alpha_2+\alpha_3+ 2\alpha_4+\alpha_5+\alpha_6,\alpha_3>$,
  $<\alpha_3, \alpha_4,\alpha_1>$, $<\alpha_3,\alpha_4, \alpha_1+ \alpha_2+\alpha_3+ 2\alpha_4+2\alpha_5+2\alpha_6+\alpha_7>$, $<\alpha_3,\alpha_4+\alpha_5+\alpha_6,\alpha_1>$, $<\alpha_3,\alpha_4+\alpha_5+\alpha_6, \alpha_1+\alpha_2+\alpha_3+ 2\alpha_4+\alpha_5>$, 
  $<\alpha_1+\alpha_3,\alpha_4,\alpha_2+\alpha_3+ 2\alpha_4+2\alpha_5+2\alpha_6+\alpha_7>$, $<\alpha_1+\alpha_3,\alpha_4+\alpha_5+\alpha_6,\alpha_2+\alpha_3+ 2\alpha_4+\alpha_5>$, 
  $< \alpha_1+\alpha_2+ \alpha_3+2\alpha_4+\alpha_5,\alpha_6,\alpha_3>$,
  $< \alpha_2+ \alpha_3+2\alpha_4+\alpha_5,\alpha_6,\alpha_1>$, $< \alpha_2+ \alpha_3+2\alpha_4+\alpha_5,\alpha_6,\alpha_1+\alpha_3>$\\
\hline
  \multirow{4}{*}{$D_6$} & $<\alpha_4,\alpha_6,\alpha_3>$, $<\alpha_4,\alpha_6,\alpha_1+\alpha_3>$, $<\alpha_4,\alpha_6,\alpha_1+\alpha_2+ 2\alpha_3+2\alpha_4+\alpha_5 >$,
  $<\alpha_3+\alpha_4,\alpha_6,\alpha_1>$, $<\alpha_3+\alpha_4,\alpha_6,\alpha_1+\alpha_2+ \alpha_3+2\alpha_4+\alpha_5 >$,
   $<\alpha_1+\alpha_3+\alpha_4,\alpha_6,\alpha_2+ \alpha_3+2\alpha_4+\alpha_5 >$,
   $<\alpha_4,\alpha_3+\alpha_4+\alpha_5+\alpha_6,\alpha_1+\alpha_3>$, $<\alpha_4,\alpha_3+\alpha_4+\alpha_5+\alpha_6,\alpha_1>$, $<\alpha_4,\alpha_1+\alpha_3+\alpha_4+\alpha_5+\alpha_6,\alpha_3>$,
  $<\alpha_3+\alpha_4,\alpha_4+\alpha_5+\alpha_6,\alpha_1>$, $<\alpha_3+\alpha_4,\alpha_4+\alpha_5+\alpha_6,\alpha_1+\alpha_3>$,
   $<\alpha_1+\alpha_3+\alpha_4,\alpha_4+\alpha_5+\alpha_6,\alpha_3>$ \\
\hline
 \multirow{6}{*}{$A_5$} & $<\alpha_4,\alpha_6>$, $<\alpha_4,\alpha_1+\alpha_2+2\alpha_3+2\alpha_4+\alpha_5+\alpha_6>$,
   $<\alpha_3+\alpha_4,\alpha_6>$, $<\alpha_3+\alpha_4,\alpha_1+\alpha_2+\alpha_3+2\alpha_4+\alpha_5+\alpha_6>$, 
   $<\alpha_1+\alpha_3+\alpha_4,\alpha_6>$, $<\alpha_1+\alpha_3+\alpha_4,\alpha_2+\alpha_3+2\alpha_4+\alpha_5+\alpha_6>$,  
   $<\alpha_4+\alpha_5+\alpha_6,\alpha_1+\alpha_2+2\alpha_3+2\alpha_4+\alpha_5+\alpha_6>$,
   $<\alpha_3+\alpha_4+\alpha_5+\alpha_6,\alpha_1+\alpha_2+\alpha_3+2\alpha_4+\alpha_5+\alpha_6>$, 
   $<\alpha_1+\alpha_3+\alpha_4+\alpha_5+\alpha_6,\alpha_2+\alpha_3+2\alpha_4+\alpha_5+\alpha_6>$, 
   $<\alpha_1+\alpha_2+2\alpha_3+3\alpha_4+2\alpha_5+\alpha_6,\alpha_6>$, 
   $<\alpha_4,\alpha_3+\alpha_4+\alpha_5+\alpha_6>$, $<\alpha_4,\alpha_1+\alpha_3+\alpha_4+\alpha_5+\alpha_6>$,
   $<\alpha_3+\alpha_4,\alpha_4+\alpha_5+\alpha_6>$, $<\alpha_3+\alpha_4,\alpha_1+\alpha_3+\alpha_4+\alpha_5+\alpha_6>$, 
   $<\alpha_1+\alpha_3+\alpha_4,\alpha_4+\alpha_5+\alpha_6>$, $<\alpha_1+\alpha_3+\alpha_4,\alpha_3+\alpha_4+\alpha_5+\alpha_6>$  \\
\hline
 \multirow{4}{*}{$A_1+A_1+A_1+A_1$} & $<\alpha_1>$, 
   $<\alpha_3>$, 
   $<\alpha_1+\alpha_3>$,
   $<\alpha_2+\alpha_3+2\alpha_4+\alpha_5>$, 
   $<\alpha_1+\alpha_2+\alpha_3+2\alpha_4+\alpha_5>$, 
   $<\alpha_1+\alpha_2+2\alpha_3+2\alpha_4+\alpha_5>$, 
   $<\alpha_2+\alpha_3+2\alpha_4+2\alpha_5+2\alpha_6+\alpha_7>$, 
   $<\alpha_1+\alpha_2+\alpha_3+2\alpha_4+2\alpha_5+2\alpha_6+\alpha_7>$, 
   $<\alpha_1+\alpha_2+2\alpha_3+2\alpha_4+2\alpha_5+2\alpha_6+\alpha_7>$, 
   $<\alpha_1+2\alpha_2+2\alpha_3+4\alpha_4+3\alpha_5+2\alpha_6+\alpha_7>$, 
   $<\alpha_1+2\alpha_2+3\alpha_3+4\alpha_4+3\alpha_5+2\alpha_6+\alpha_7>$, 
   $<2\alpha_1+2\alpha_2+3\alpha_3+4\alpha_4+3\alpha_5+2\alpha_6+\alpha_7>$   \\
\hline
 \multirow{8}{*}{$A_2+A_1+A_1+A_1$} &  $<\alpha_1,\alpha_3>$, $<\alpha_1,\alpha_2+\alpha_3+2\alpha_4+\alpha_5>$, $<\alpha_1,\alpha_2+\alpha_3+2\alpha_4+2\alpha_5+2\alpha_6+\alpha_7>$, $<\alpha_1,\alpha_1+2\alpha_2+3\alpha_3+4\alpha_4+3\alpha_5+2\alpha_6+\alpha_7>$,
   $<\alpha_3,\alpha_1+\alpha_2+\alpha_3+2\alpha_4+\alpha_5>$, $<\alpha_3,\alpha_1+\alpha_2+\alpha_3+2\alpha_4+2\alpha_5+2\alpha_6+\alpha_7>$, $<\alpha_3,\alpha_1+2\alpha_2+2\alpha_3+4\alpha_4+3\alpha_5+2\alpha_6+\alpha_7>$, 
   $<\alpha_1+\alpha_3,\alpha_2+\alpha_3+2\alpha_4+\alpha_5>$, $<\alpha_1+\alpha_3,\alpha_2+\alpha_3+2\alpha_4+2\alpha_5+2\alpha_6+\alpha_7>$, $<\alpha_1+\alpha_3,\alpha_1+2\alpha_2+2\alpha_3+4\alpha_4+3\alpha_5+2\alpha_6+\alpha_7>$, 
   $<\alpha_2+\alpha_3+2\alpha_4+\alpha_5,\alpha_1+\alpha_2+\alpha_3+2\alpha_4+2\alpha_5+2\alpha_6+\alpha_7>$,  $<\alpha_2+\alpha_3+2\alpha_4+\alpha_5,\alpha_1+\alpha_2+2\alpha_3+2\alpha_4+2\alpha_5+2\alpha_6+\alpha_7>$,
   $<\alpha_1+\alpha_2+\alpha_3+2\alpha_4+\alpha_5,\alpha_2+\alpha_3+2\alpha_4+2\alpha_5+2\alpha_6+\alpha_7>$,  $<\alpha_1+\alpha_2+\alpha_3+2\alpha_4+\alpha_5,\alpha_1+\alpha_2+2\alpha_3+2\alpha_4+2\alpha_5+2\alpha_6+\alpha_7>$,
   $<\alpha_1+\alpha_2+2\alpha_3+2\alpha_4+\alpha_5,\alpha_2+\alpha_3+2\alpha_4+2\alpha_5+2\alpha_6+\alpha_7>$,  $<\alpha_1+\alpha_2+2\alpha_3+2\alpha_4+\alpha_5,\alpha_1+\alpha_2+\alpha_3+2\alpha_4+2\alpha_5+2\alpha_6+\alpha_7>$ \\
\hline
 \multirow{35}{*}{$A_3+A_1+A_1$} &  $<\alpha_4,\alpha_1>$, $<\alpha_4,\alpha_1+\alpha_2+2\alpha_3+2\alpha_4+\alpha_5>$, $<\alpha_4,\alpha_2+\alpha_3+2\alpha_4+2\alpha_5+2\alpha_6+\alpha_7>$, $<\alpha_4,\alpha_1+\alpha_2+\alpha_3+2\alpha_4+2\alpha_5+2\alpha_6+\alpha_7>$, $<\alpha_4,\alpha_1+2\alpha_2+3\alpha_3+4\alpha_4+3\alpha_5+2\alpha_6+\alpha_7>$, $<\alpha_4,2\alpha_1+2\alpha_2+3\alpha_3+4\alpha_4+3\alpha_5+2\alpha_6+\alpha_7>$, 
  $<\alpha_3+\alpha_4,\alpha_1+\alpha_3>$, $<\alpha_3+\alpha_4,\alpha_1+\alpha_2+\alpha_3+2\alpha_4+\alpha_5>$, $<\alpha_3+\alpha_4,\alpha_2+\alpha_3+2\alpha_4+2\alpha_5+2\alpha_6+\alpha_7>$, $<\alpha_3+\alpha_4,\alpha_1+\alpha_2+2\alpha_3+2\alpha_4+2\alpha_5+2\alpha_6+\alpha_7>$, $<\alpha_3+\alpha_4,\alpha_1+2\alpha_2+2\alpha_3+4\alpha_4+3\alpha_5+2\alpha_6+\alpha_7>$, $<\alpha_3+\alpha_4,2\alpha_1+2\alpha_2+3\alpha_3+4\alpha_4+3\alpha_5+2\alpha_6+\alpha_7>$,  
  $<\alpha_1+\alpha_3+\alpha_4,\alpha_3>$, $<\alpha_1+\alpha_3+\alpha_4,\alpha_2+\alpha_3+2\alpha_4+\alpha_5>$, $<\alpha_1+\alpha_3+\alpha_4,\alpha_1+\alpha_2+\alpha_3+2\alpha_4+2\alpha_5+2\alpha_6+\alpha_7>$, $<\alpha_1+\alpha_3+\alpha_4,\alpha_1+\alpha_2+2\alpha_3+2\alpha_4+2\alpha_5+2\alpha_6+\alpha_7>$, $<\alpha_1+\alpha_3+\alpha_4,\alpha_1+2\alpha_2+2\alpha_3+4\alpha_4+3\alpha_5+2\alpha_6+\alpha_7>$, $<\alpha_1+\alpha_3+\alpha_4,\alpha_1+2\alpha_2+3\alpha_3+4\alpha_4+3\alpha_5+2\alpha_6+\alpha_7>$,
  $<\alpha_1+\alpha_2+2\alpha_3+3\alpha_4+2\alpha_5+2\alpha_6+\alpha_7,\alpha_1>$,  $<\alpha_1+\alpha_2+2\alpha_3+3\alpha_4+2\alpha_5+2\alpha_6+\alpha_7,\alpha_3>$,  $<\alpha_1+\alpha_2+2\alpha_3+3\alpha_4+2\alpha_5+2\alpha_6+\alpha_7,\alpha_1+\alpha_3>$,  $<\alpha_1+\alpha_2+2\alpha_3+3\alpha_4+2\alpha_5+2\alpha_6+\alpha_7,\alpha_2+\alpha_3+2\alpha_4+\alpha_5>$,  $<\alpha_1+\alpha_2+2\alpha_3+3\alpha_4+2\alpha_5+2\alpha_6+\alpha_7,\alpha_1+\alpha_2+\alpha_3+2\alpha_4+\alpha_5>$,  $<\alpha_1+\alpha_2+2\alpha_3+3\alpha_4+2\alpha_5+2\alpha_6+\alpha_7,\alpha_1+\alpha_2+2\alpha_3+2\alpha_4+\alpha_5>$,
  \newline
  \newline
   $<\alpha_4+\alpha_5+\alpha_6,\alpha_1>$, $<\alpha_4+\alpha_5+\alpha_6,\alpha_2+\alpha_3+2\alpha_4+\alpha_5>$, $<\alpha_4+\alpha_5+\alpha_6,\alpha_1+\alpha_2+\alpha_3+2\alpha_4+\alpha_5>$, $<\alpha_4+\alpha_5+\alpha_6,\alpha_1+\alpha_2+2\alpha_3+2\alpha_4+2\alpha_5+2\alpha_6+\alpha_7>$, $<\alpha_4+\alpha_5+\alpha_6,\alpha_1+2\alpha_2+3\alpha_3+4\alpha_4+3\alpha_5+2\alpha_6+\alpha_7>$, $<\alpha_4+\alpha_5+\alpha_6,2\alpha_1+2\alpha_2+3\alpha_3+4\alpha_4+3\alpha_5+2\alpha_6+\alpha_7>$, 
  $<\alpha_3+\alpha_4+\alpha_5+\alpha_6,\alpha_1+\alpha_3>$, $<\alpha_3+\alpha_4+\alpha_5+\alpha_6,\alpha_2+\alpha_3+2\alpha_4+\alpha_5>$, $<\alpha_3+\alpha_4+\alpha_5+\alpha_6,\alpha_1+\alpha_2+2\alpha_3+2\alpha_4+\alpha_5>$, $<\alpha_3+\alpha_4+\alpha_5+\alpha_6,\alpha_1+\alpha_2+\alpha_3+2\alpha_4+2\alpha_5+2\alpha_6+\alpha_7>$, $<\alpha_3+\alpha_4+\alpha_5+\alpha_6,\alpha_1+2\alpha_2+2\alpha_3+4\alpha_4+3\alpha_5+2\alpha_6+\alpha_7>$, $<\alpha_3+\alpha_4+\alpha_5+\alpha_6,2\alpha_1+2\alpha_2+3\alpha_3+4\alpha_4+3\alpha_5+2\alpha_6+\alpha_7>$, 
  $<\alpha_1+\alpha_3+\alpha_4+\alpha_5+\alpha_6,\alpha_3>$, $<\alpha_1+\alpha_3+\alpha_4+\alpha_5+\alpha_6,\alpha_1+\alpha_2+\alpha_3+2\alpha_4+\alpha_5>$, $<\alpha_1+\alpha_3+\alpha_4+\alpha_5+\alpha_6,\alpha_1+\alpha_2+2\alpha_3+2\alpha_4+\alpha_5>$, $<\alpha_1+\alpha_3+\alpha_4+\alpha_5+\alpha_6,\alpha_2+\alpha_3+2\alpha_4+2\alpha_5+2\alpha_6+\alpha_7>$, 
  $<\alpha_1+\alpha_3+\alpha_4+\alpha_5+\alpha_6,\alpha_1+2\alpha_2+2\alpha_3+4\alpha_4+3\alpha_5+2\alpha_6+\alpha_7>$, $<\alpha_1+\alpha_3+\alpha_4+\alpha_5+\alpha_6, \alpha_1+ 2\alpha_2+ 3\alpha_3+ 4\alpha_4+ 3\alpha_5+2\alpha_6+\alpha_7>$,  
  $<\alpha_1+\alpha_2+2\alpha_3+3\alpha_4+2\alpha_5+\alpha_6,\alpha_1>$, $<\alpha_1+\alpha_2+2\alpha_3+3\alpha_4+2\alpha_5+\alpha_6,\alpha_3>$, $<\alpha_1+\alpha_2+2\alpha_3+3\alpha_4+2\alpha_5+\alpha_6,\alpha_1+\alpha_3>$, $<\alpha_1+\alpha_2+2\alpha_3+3\alpha_4+2\alpha_5+\alpha_6,\alpha_2+\alpha_3+2\alpha_4+2\alpha_5+2\alpha_6+\alpha_7>$, $<\alpha_1+\alpha_2+2\alpha_3+3\alpha_4+2\alpha_5+\alpha_6,\alpha_1+\alpha_2+\alpha_3+2\alpha_4+2\alpha_5+2\alpha_6+\alpha_7>$, $<\alpha_1+\alpha_2+2\alpha_3+3\alpha_4+2\alpha_5+\alpha_6,\alpha_1+\alpha_2+2\alpha_3+2\alpha_4+2\alpha_5+2\alpha_6+\alpha_7>$,
  \newline
  \newline
  $<\alpha_6,\alpha_1>$, $<\alpha_6,\alpha_3>$, $<\alpha_6,\alpha_1+\alpha_3>$, $<\alpha_6,\alpha_1+2\alpha_2+2\alpha_3+4\alpha_4+3\alpha_5+2\alpha_6+\alpha_7>$, $<\alpha_6,\alpha_1+2\alpha_2+3\alpha_3+4\alpha_4+3\alpha_5+2\alpha_6+\alpha_7>$, $<\alpha_6,2\alpha_1+2\alpha_2+3\alpha_3+4\alpha_4+3\alpha_5+2\alpha_6+\alpha_7>$, 
  $<\alpha_2+\alpha_3+2\alpha_4+\alpha_5+\alpha_6,\alpha_3>$, $<\alpha_2+\alpha_3+2\alpha_4+\alpha_5+\alpha_6,\alpha_1+\alpha_2+\alpha_3+2\alpha_4+\alpha_5>$, $<\alpha_2+\alpha_3+2\alpha_4+\alpha_5+\alpha_6,\alpha_1+\alpha_2+2\alpha_3+2\alpha_4+\alpha_5>$, $<\alpha_2+\alpha_3+2\alpha_4+\alpha_5+\alpha_6,\alpha_1+\alpha_2+\alpha_3+2\alpha_4+2\alpha_5+2\alpha_6+\alpha_7>$, $<\alpha_2+\alpha_3+2\alpha_4+\alpha_5+\alpha_6,\alpha_1+\alpha_2+2\alpha_3+2\alpha_4+2\alpha_5+2\alpha_6+\alpha_7>$, $<\alpha_2+\alpha_3+2\alpha_4+\alpha_5+\alpha_6,2\alpha_1+2\alpha_2+3\alpha_3+4\alpha_4+3\alpha_5+2\alpha_6+\alpha_7>$, 
  $<\alpha_1+\alpha_2+\alpha_3+2\alpha_4+\alpha_5+\alpha_6,\alpha_1+\alpha_3>$,  $<\alpha_1+\alpha_2+\alpha_3+2\alpha_4+\alpha_5+\alpha_6,\alpha_2+\alpha_3+2\alpha_4+\alpha_5>$,  $<\alpha_1+\alpha_2+\alpha_3+2\alpha_4+\alpha_5+\alpha_6,\alpha_1+\alpha_2+2\alpha_3+2\alpha_4+\alpha_5>$, $<\alpha_1+\alpha_2+\alpha_3+2\alpha_4+\alpha_5+\alpha_6,\alpha_2+\alpha_3+2\alpha_4+2\alpha_5+2\alpha_6+\alpha_7>$, 
 $<\alpha_1+\alpha_2+\alpha_3+2\alpha_4+\alpha_5+\alpha_6,\alpha_1+\alpha_2+2\alpha_3+2\alpha_4+2\alpha_5+2\alpha_6+\alpha_7>$,  $<\alpha_1+\alpha_2+\alpha_3+2\alpha_4+\alpha_5+\alpha_6,\alpha_1+2\alpha_2+3\alpha_3+4\alpha_4+3\alpha_5+2\alpha_6+\alpha_7>$,    
   $<\alpha_1+\alpha_2+2\alpha_3+2\alpha_4+\alpha_5+\alpha_6,\alpha_1>$, $<\alpha_1+\alpha_2+2\alpha_3+2\alpha_4+\alpha_5+\alpha_6,\alpha_2+\alpha_3+2\alpha_4+\alpha_5>$, $<\alpha_1+\alpha_2+2\alpha_3+2\alpha_4+\alpha_5+\alpha_6,\alpha_1+\alpha_2+\alpha_3+2\alpha_4+\alpha_5>$, $<\alpha_1+\alpha_2+2\alpha_3+2\alpha_4+\alpha_5+\alpha_6,\alpha_2+\alpha_3+2\alpha_4+2\alpha_5+2\alpha_6+\alpha_7>$, $<\alpha_1+\alpha_2+2\alpha_3+2\alpha_4+\alpha_5+\alpha_6,\alpha_1+\alpha_2+\alpha_3+2\alpha_4+2\alpha_5+2\alpha_6+\alpha_7>$, $<\alpha_1+\alpha_2+2\alpha_3+2\alpha_4+\alpha_5+\alpha_6,\alpha_1+2\alpha_2+2\alpha_3+4\alpha_4+3\alpha_5+2\alpha_6+\alpha_7>$  \\
\hline
\multirow{30}{*}{$A_3+A_2+A_1$} &  $<\alpha_4,\alpha_1 ,\alpha_2+\alpha_3+2\alpha_4+2\alpha_5+2\alpha_6+\alpha_7 >$,  $<\alpha_4,\alpha_1 ,\alpha_1+2\alpha_2+3\alpha_3+4\alpha_4+3\alpha_5+2\alpha_6+\alpha_7 >$,  $<\alpha_4, \alpha_1+\alpha_2+2\alpha_3+2\alpha_4+\alpha_5, \alpha_2+\alpha_3+2\alpha_4+2\alpha_5+2\alpha_6+\alpha_7>$,  $<\alpha_4,\alpha_1+\alpha_2+2\alpha_3+2\alpha_4+\alpha_5 , \alpha_1+\alpha_2+\alpha_3+2\alpha_4+2\alpha_5+2\alpha_6+\alpha_7>$, 

  $<\alpha_3+\alpha_4, \alpha_1+\alpha_3 ,\alpha_2+\alpha_3+2\alpha_4+2\alpha_5+2\alpha_6+\alpha_7>$, $<\alpha_3+\alpha_4, \alpha_1+\alpha_3,\alpha_1+2\alpha_2+2\alpha_3+4\alpha_4+3\alpha_5+2\alpha_6+\alpha_7 >$, $<\alpha_3+\alpha_4,\alpha_1+\alpha_2+\alpha_3+2\alpha_4+\alpha_5 ,\alpha_2+\alpha_3+2\alpha_4+2\alpha_5+2\alpha_6+\alpha_7 >$, $<\alpha_3+\alpha_4, \alpha_1+\alpha_2+\alpha_3+2\alpha_4+\alpha_5, \alpha_1+\alpha_2+2\alpha_3+2\alpha_4+2\alpha_5+2\alpha_6+\alpha_7>$, 
  
 $<\alpha_1+\alpha_3+\alpha_4,\alpha_3,  \alpha_1+\alpha_2+\alpha_3+2\alpha_4+2\alpha_5+2\alpha_6+\alpha_7 >$, $<\alpha_1+\alpha_3+\alpha_4, \alpha_3, \alpha_1+2\alpha_2+2\alpha_3+4\alpha_4+3\alpha_5+2\alpha_6+\alpha_7 >$, $<\alpha_1+\alpha_3+\alpha_4,\alpha_2+ \alpha_3+2\alpha_4+\alpha_5,  \alpha_1+\alpha_2+\alpha_3+2\alpha_4+2\alpha_5+2\alpha_6+\alpha_7  >$, $<\alpha_1+\alpha_3+\alpha_4, \alpha_2+\alpha_3+2\alpha_4+\alpha_5 , \alpha_1+\alpha_2+2\alpha_3+2\alpha_4+2\alpha_5+2\alpha_6+\alpha_7  >$,  
 
  $<\alpha_1+\alpha_2+2\alpha_3+3\alpha_4+2\alpha_5+2\alpha_6+\alpha_7, \alpha_1, \alpha_3 >$,  $<\alpha_1+\alpha_2+2\alpha_3+3\alpha_4+2\alpha_5+2\alpha_6+\alpha_7, \alpha_1,\alpha_2+\alpha_3+2\alpha_4+\alpha_5 >$,  $<\alpha_1+\alpha_2+2\alpha_3+3\alpha_4+2\alpha_5+2\alpha_6+\alpha_7, \alpha_3, \alpha_1+\alpha_2+\alpha_3+2\alpha_4+\alpha_5>$,  $<\alpha_1+\alpha_2+2\alpha_3+3\alpha_4+2\alpha_5+2\alpha_6+\alpha_7, \alpha_1+\alpha_3, \alpha_2+\alpha_3+2\alpha_4+\alpha_5>$,  
  
  $<\alpha_4+\alpha_5+\alpha_6, \alpha_1, \alpha_2+ \alpha_3+2\alpha_4+\alpha_5>$, $<\alpha_4+\alpha_5+\alpha_6, \alpha_1, \alpha_1+2\alpha_2+3\alpha_3+4\alpha_4+3\alpha_5+2\alpha_6+\alpha_7>$, $<\alpha_4+\alpha_5+\alpha_6, \alpha_2+ \alpha_3+2\alpha_4+\alpha_5, \alpha_1+\alpha_2+2\alpha_3+2\alpha_4+2\alpha_5+2\alpha_6+\alpha_7>$, $<\alpha_4+\alpha_5+\alpha_6, \alpha_1+\alpha_2+ \alpha_3+2\alpha_4+\alpha_5, \alpha_1+\alpha_2+2\alpha_3+2\alpha_4+2\alpha_5+2\alpha_6+\alpha_7 >$, 
  
  $<\alpha_3+\alpha_4+\alpha_5+\alpha_6, \alpha_1+\alpha_3,  \alpha_2+ \alpha_3+2\alpha_4+\alpha_5>$, $<\alpha_3+\alpha_4+\alpha_5+\alpha_6,  \alpha_1+\alpha_3,  \alpha_1+2\alpha_2+2\alpha_3+4\alpha_4+3\alpha_5+2\alpha_6+\alpha_7>$, $<\alpha_3+\alpha_4+\alpha_5+\alpha_6,  \alpha_2+ \alpha_3+2\alpha_4+\alpha_5,  \alpha_1+\alpha_2+\alpha_3+2\alpha_4+2\alpha_5+2\alpha_6+\alpha_7>$, $<\alpha_3+\alpha_4+\alpha_5+\alpha_6, \alpha_1+ \alpha_2+ 2\alpha_3+2\alpha_4+\alpha_5,  \alpha_1+\alpha_2+\alpha_3+2\alpha_4+2\alpha_5+2\alpha_6+\alpha_7>$, 
  
  $<\alpha_1+\alpha_3+\alpha_4+\alpha_5+\alpha_6, \alpha_3,  \alpha_1+ \alpha_2+ \alpha_3+2\alpha_4+\alpha_5>$, $<\alpha_1+\alpha_3+\alpha_4+\alpha_5+\alpha_6, \alpha_3, \alpha_1+2\alpha_2+2\alpha_3+4\alpha_4+3\alpha_5+2\alpha_6+\alpha_7>$, $<\alpha_1+\alpha_3+\alpha_4+\alpha_5+\alpha_6,  \alpha_1+ \alpha_2+ \alpha_3+2\alpha_4+\alpha_5, \alpha_2+\alpha_3+2\alpha_4+2\alpha_5+2\alpha_6+\alpha_7>$, $<\alpha_1+\alpha_3+\alpha_4+\alpha_5+\alpha_6,  \alpha_1+ \alpha_2+ 2\alpha_3+2\alpha_4+\alpha_5, \alpha_2+\alpha_3+2\alpha_4+2\alpha_5+2\alpha_6+\alpha_7>$,
     
  $<\alpha_1+\alpha_2+2\alpha_3+3\alpha_4+2\alpha_5+\alpha_6, \alpha_1, \alpha_3>$, $<\alpha_1+\alpha_2+2\alpha_3+3\alpha_4+2\alpha_5+\alpha_6, \alpha_1,  \alpha_2+\alpha_3+2\alpha_4+2\alpha_5+2\alpha_6+\alpha_7>$, $<\alpha_1+\alpha_2+2\alpha_3+3\alpha_4+2\alpha_5+\alpha_6, \alpha_3, \alpha_1+ \alpha_2+\alpha_3+2\alpha_4+2\alpha_5+2\alpha_6+\alpha_7>$, $<\alpha_1+\alpha_2+2\alpha_3+3\alpha_4+2\alpha_5+\alpha_6, \alpha_1+\alpha_3,  \alpha_2+\alpha_3+2\alpha_4+2\alpha_5+2\alpha_6+\alpha_7>$,  
  
  $<\alpha_6, \alpha_1, \alpha_3>$, $<\alpha_6, \alpha_1, \alpha_1+2\alpha_2+3\alpha_3+4\alpha_4+3\alpha_5+2\alpha_6+\alpha_7>$, $<\alpha_6, \alpha_3, \alpha_1+2\alpha_2+2\alpha_3+4\alpha_4+3\alpha_5+2\alpha_6+\alpha_7>$, $<\alpha_6, \alpha_1+\alpha_3, \alpha_1+2\alpha_2+2\alpha_3+4\alpha_4+3\alpha_5+2\alpha_6+\alpha_7>$,   
  
  $<\alpha_2+\alpha_3+2\alpha_4+\alpha_5+\alpha_6, \alpha_3, \alpha_1+\alpha_2+\alpha_3+2\alpha_4+\alpha_5>$, $<\alpha_2+\alpha_3+2\alpha_4+\alpha_5+\alpha_6, \alpha_3,  \alpha_1+\alpha_2+\alpha_3+2\alpha_4+2\alpha_5+2\alpha_6+\alpha_7>$, $<\alpha_2+\alpha_3+2\alpha_4+\alpha_5+\alpha_6,  \alpha_1+\alpha_2+\alpha_3+2\alpha_4+\alpha_5, \alpha_1+\alpha_2+2\alpha_3+2\alpha_4+2\alpha_5+2\alpha_6+\alpha_7>$, $<\alpha_2+\alpha_3+2\alpha_4+\alpha_5+\alpha_6, \alpha_1+\alpha_2+2\alpha_3+2\alpha_4+\alpha_5, \alpha_1+\alpha_2+\alpha_3+2\alpha_4+2\alpha_5+2\alpha_6+\alpha_7>$, 
  
  $<\alpha_1+\alpha_2+\alpha_3+2\alpha_4+\alpha_5+\alpha_6, \alpha_1+\alpha_3, \alpha_2+\alpha_3+2\alpha_4+\alpha_5>$,  $<\alpha_1+\alpha_2+\alpha_3+2\alpha_4+\alpha_5+\alpha_6, \alpha_1+\alpha_3, \alpha_2+\alpha_3+2\alpha_4+2\alpha_5+2\alpha_6+\alpha_7>$,  $<\alpha_1+\alpha_2+\alpha_3+2\alpha_4+\alpha_5+\alpha_6, \alpha_2+\alpha_3+2\alpha_4+\alpha_5, \alpha_1+\alpha_2+2\alpha_3+2\alpha_4+2\alpha_5+2\alpha_6+\alpha_7>$,  $<\alpha_1+\alpha_2+\alpha_3+2\alpha_4+\alpha_5+\alpha_6, \alpha_1+\alpha_2+2\alpha_3+2\alpha_4+\alpha_5, \alpha_2+\alpha_3+2\alpha_4+2\alpha_5+2\alpha_6+\alpha_7>$, 
  
   $<\alpha_1+\alpha_2+2\alpha_3+2\alpha_4+\alpha_5+\alpha_6, \alpha_1, \alpha_2+\alpha_3+2\alpha_4+\alpha_5>$, $<\alpha_1+\alpha_2+2\alpha_3+2\alpha_4+\alpha_5+\alpha_6, \alpha_1, \alpha_2+\alpha_3+2\alpha_4+2\alpha_5+2\alpha_6+\alpha_7>$, $<\alpha_1+\alpha_2+2\alpha_3+2\alpha_4+\alpha_5+\alpha_6, \alpha_2+\alpha_3+2\alpha_4+\alpha_5, \alpha_1+\alpha_2+\alpha_3+2\alpha_4+2\alpha_5+2\alpha_6+\alpha_7>$, $<\alpha_1+\alpha_2+2\alpha_3+2\alpha_4+\alpha_5+\alpha_6, \alpha_1+\alpha_2+\alpha_3+2\alpha_4+\alpha_5, \alpha_2+\alpha_3+2\alpha_4+2\alpha_5+2\alpha_6+\alpha_7>$  \\
\hline
\multirow{30}{*}{$A_5+A_1$}  &  $<\alpha_4, \alpha_6, \alpha_1>$, $<\alpha_4, \alpha_6, \alpha_1+2\alpha_2+3\alpha_3+4\alpha_4+3\alpha_5+2\alpha_6+\alpha_7>$, $<\alpha_4, \alpha_6, 2\alpha_1+2\alpha_2+3\alpha_3+4\alpha_4+3\alpha_5+2\alpha_6+\alpha_7>$, $<\alpha_4, \alpha_1+\alpha_2+2\alpha_3+2\alpha_4+\alpha_5+\alpha_6, \alpha_1>$, $<\alpha_4, \alpha_1+\alpha_2+2\alpha_3+2\alpha_4+\alpha_5+\alpha_6, \alpha_2+\alpha_3+2\alpha_4+2\alpha_5+2\alpha_6+\alpha_7>$, $<\alpha_4, \alpha_1+\alpha_2+2\alpha_3+2\alpha_4+\alpha_5+\alpha_6, \alpha_1+ \alpha_2+\alpha_3+2\alpha_4+2\alpha_5+2\alpha_6+\alpha_7>$, 
   $<\alpha_3+\alpha_4, \alpha_6, \alpha_1+\alpha_3>$, $<\alpha_3+\alpha_4, \alpha_6,  \alpha_1+ 2\alpha_2+2\alpha_3+4\alpha_4+3\alpha_5+2\alpha_6+\alpha_7>$, $<\alpha_3+\alpha_4, \alpha_6,  2\alpha_1+ 2\alpha_2+3\alpha_3+4\alpha_4+3\alpha_5+2\alpha_6+\alpha_7>$, $<\alpha_3+\alpha_4, \alpha_1+\alpha_2+\alpha_3+2\alpha_4+\alpha_5+\alpha_6, \alpha_1+\alpha_3>$, $<\alpha_3+\alpha_4,  \alpha_1+\alpha_2+\alpha_3+2\alpha_4+\alpha_5+\alpha_6,  \alpha_2+\alpha_3+2\alpha_4+2\alpha_5+2\alpha_6+\alpha_7>$, $<\alpha_3+\alpha_4,  \alpha_1+\alpha_2+\alpha_3+2\alpha_4+\alpha_5+\alpha_6,  \alpha_1+ \alpha_2+2\alpha_3+2\alpha_4+2\alpha_5+2\alpha_6+\alpha_7>$, 
   
   $<\alpha_1+\alpha_3+\alpha_4, \alpha_6, \alpha_3>$, $<\alpha_1+\alpha_3+\alpha_4, \alpha_6,  \alpha_1+2 \alpha_2+2\alpha_3+4\alpha_4+3\alpha_5+2\alpha_6+\alpha_7>$, $<\alpha_1+\alpha_3+\alpha_4, \alpha_6,  \alpha_1+ 2\alpha_2+3\alpha_3+4\alpha_4+3\alpha_5+2\alpha_6+\alpha_7 >$, $<\alpha_1+\alpha_3+\alpha_4, \alpha_2+\alpha_3+2\alpha_4+\alpha_5+\alpha_6, \alpha_3>$, $<\alpha_1+\alpha_3+\alpha_4,  \alpha_2+\alpha_3+2\alpha_4+\alpha_5+\alpha_6,  \alpha_1+ \alpha_2+\alpha_3+2\alpha_4+2\alpha_5+2\alpha_6+\alpha_7>$, $<\alpha_1+\alpha_3+\alpha_4,  \alpha_2+\alpha_3+2\alpha_4+\alpha_5+\alpha_6,  \alpha_1+ \alpha_2+2\alpha_3+2\alpha_4+2\alpha_5+2\alpha_6+\alpha_7>$,  
   
   $<\alpha_4+\alpha_5+\alpha_6,  \alpha_1+ \alpha_2+2\alpha_3+2\alpha_4+\alpha_5+\alpha_6, \alpha_1>$, $<\alpha_4+\alpha_5+\alpha_6,  \alpha_1+ \alpha_2+2\alpha_3+2\alpha_4+\alpha_5+\alpha_6,  \alpha_2+\alpha_3+2\alpha_4+\alpha_5>$, $<\alpha_4+\alpha_5+\alpha_6,  \alpha_1+ \alpha_2+2\alpha_3+2\alpha_4+\alpha_5+\alpha_6,  \alpha_1+ \alpha_2+\alpha_3+2\alpha_4+\alpha_5>$, 
   
   $<\alpha_3+\alpha_4+\alpha_5+\alpha_6,  \alpha_1+ \alpha_2+\alpha_3+2\alpha_4+\alpha_5+\alpha_6, \alpha_1+\alpha_3>$, $<\alpha_3+\alpha_4+\alpha_5+\alpha_6,  \alpha_1+ \alpha_2+\alpha_3+2\alpha_4+\alpha_5+\alpha_6,  \alpha_2+\alpha_3+2\alpha_4+\alpha_5>$, $<\alpha_3+\alpha_4+\alpha_5+\alpha_6,  \alpha_1+ \alpha_2+\alpha_3+2\alpha_4+\alpha_5+\alpha_6,  \alpha_1+ \alpha_2+2\alpha_3+2\alpha_4+\alpha_5>$, 
   
   $<\alpha_1+\alpha_3+\alpha_4+\alpha_5+\alpha_6,  \alpha_2+\alpha_3+2\alpha_4+\alpha_5+\alpha_6, \alpha_3>$, $<\alpha_1+\alpha_3+\alpha_4+\alpha_5+\alpha_6,  \alpha_2+\alpha_3+2\alpha_4+\alpha_5+\alpha_6,   \alpha_1+\alpha_2+\alpha_3+2\alpha_4+\alpha_5>$, $<\alpha_1+\alpha_3+\alpha_4+\alpha_5+\alpha_6,  \alpha_2+\alpha_3+2\alpha_4+\alpha_5+\alpha_6,  \alpha_1+ \alpha_2+2\alpha_3+2\alpha_4+\alpha_5>$, 
   
   $<\alpha_1+\alpha_2+2\alpha_3+3\alpha_4+2\alpha_5+\alpha_6, \alpha_6, \alpha_1>$, $<\alpha_1+\alpha_2+2\alpha_3+3\alpha_4+2\alpha_5+\alpha_6, \alpha_6, \alpha_3>$, $<\alpha_1+\alpha_2+2\alpha_3+3\alpha_4+2\alpha_5+\alpha_6, \alpha_6, \alpha_1+\alpha_3>$, 

   $<\alpha_4, \alpha_3+\alpha_4+\alpha_5+\alpha_6,  \alpha_1+ \alpha_2+2\alpha_3+2\alpha_4+\alpha_5>$, $<\alpha_4, \alpha_3+\alpha_4+\alpha_5+\alpha_6,  \alpha_1+ \alpha_2+\alpha_3+2\alpha_4+2\alpha_5+2\alpha_6+\alpha_7>$, $<\alpha_4, \alpha_3+\alpha_4+\alpha_5+\alpha_6,  2\alpha_1+ 2\alpha_2+3\alpha_3+4\alpha_4+3\alpha_5+2\alpha_6+\alpha_7>$, $<\alpha_4, \alpha_1+\alpha_3+\alpha_4+\alpha_5+\alpha_6,  \alpha_1+ \alpha_2+2\alpha_3+2\alpha_4+\alpha_5>$, $<\alpha_4, \alpha_1+\alpha_3+\alpha_4+\alpha_5+\alpha_6, \alpha_2+\alpha_3+2\alpha_4+2\alpha_5+2\alpha_6+\alpha_7>$, $<\alpha_4, \alpha_1+\alpha_3+\alpha_4+\alpha_5+\alpha_6,  \alpha_1+ 2\alpha_2+3\alpha_3+4\alpha_4+3\alpha_5+2\alpha_6+\alpha_7>$, 
   
   $<\alpha_3+\alpha_4, \alpha_4+\alpha_5+\alpha_6,  \alpha_1+ \alpha_2+\alpha_3+2\alpha_4+\alpha_5>$, $<\alpha_3+\alpha_4, \alpha_4+\alpha_5+\alpha_6,  \alpha_1+ \alpha_2+2\alpha_3+2\alpha_4+2\alpha_5+2\alpha_6+\alpha_7>$, $<\alpha_3+\alpha_4, \alpha_4+\alpha_5+\alpha_6,  2\alpha_1+ 2\alpha_2+3\alpha_3+4\alpha_4+3\alpha_5+2\alpha_6+\alpha_7>$, $<\alpha_3+\alpha_4, \alpha_1+\alpha_3+\alpha_4+\alpha_5+\alpha_6,  \alpha_1+ \alpha_2+\alpha_3+2\alpha_4+\alpha_5>$, $<\alpha_3+\alpha_4,  \alpha_1+\alpha_3+\alpha_4+\alpha_5+\alpha_6,  \alpha_2+\alpha_3+2\alpha_4+2\alpha_5+2\alpha_6+\alpha_7>$, $<\alpha_3+\alpha_4,  \alpha_1+\alpha_3+\alpha_4+\alpha_5+\alpha_6,  \alpha_1+ 2\alpha_2+2\alpha_3+4\alpha_4+3\alpha_5+2\alpha_6+\alpha_7>$, 
   
   $<\alpha_1+\alpha_3+\alpha_4, \alpha_4+\alpha_5+\alpha_6,  \alpha_2+\alpha_3+2\alpha_4+\alpha_5>$, $<\alpha_1+\alpha_3+\alpha_4, \alpha_4+\alpha_5+\alpha_6,  \alpha_1+ \alpha_2+2\alpha_3+2\alpha_4+2\alpha_5+2\alpha_6+\alpha_7>$, $<\alpha_1+\alpha_3+\alpha_4, \alpha_4+\alpha_5+\alpha_6,  \alpha_1+ 2\alpha_2+3\alpha_3+4\alpha_4+3\alpha_5+2\alpha_6+\alpha_7>$, $<\alpha_1+\alpha_3+\alpha_4,  \alpha_3+\alpha_4+\alpha_5+\alpha_6,  \alpha_2+\alpha_3+2\alpha_4+\alpha_5>$, $<\alpha_1+\alpha_3+\alpha_4,  \alpha_3+\alpha_4+\alpha_5+\alpha_6,  \alpha_1+ \alpha_2+\alpha_3+2\alpha_4+2\alpha_5+2\alpha_6+\alpha_7>$, $<\alpha_1+\alpha_3+\alpha_4,  \alpha_3+\alpha_4+\alpha_5+\alpha_6,  \alpha_1+ 2\alpha_2+2\alpha_3+4\alpha_4+3\alpha_5+2\alpha_6+\alpha_7>$  \\
\hline
$E_7$ & $<\alpha_1,\alpha_2,\alpha_3,\alpha_4,\alpha_5,\alpha_6,\alpha_7>$   \\
\hline
\end{longtable}
{\addtocounter{table}{-1}}
\captionof{table}{}
  \label{realizationsE6}
\end{center}
}
\medskip

The description of $E_7$ will be the same as the one in Section~\ref{D_4-G_2-E_7}. Therefore $h' \in H_{\alpha_2} \bigcap H_{\alpha_5}  \bigcap H_{\alpha_7}$ if and only if $\xi_2=-\xi_1$, $\xi_4=\xi_3$ and $\xi_6=\xi_5$. \\
\newline

Using \cite{Abria09} and \cite{Mehta88}, one can compute the flat coordinates of $E_7$ restricted to the subspace \\
$ H_{\alpha_2} \bigcap H_{\alpha_5} \bigcap H_{\alpha_7}   \ \mathlarger{\mathlarger{\subset}} \  \mathfrak{h}'$, i.e. as functions of $\xi_1,\xi_3,\xi_5$ and $\xi_7$. We will not give these expressions because of their extensive size. For example, $\psi_{18}$ contains 220 terms. \\
\medskip

It can be verified that 
\begin{center}
\renewcommand{\arraystretch}{1.4}
\raisebox{-.80\height}{$\left\lbrace\begin{array}[h]{ccl} 
\psi_{10} & = & \frac{82928}{45}\psi_2^5-\frac{4}{3}\psi_2^2\psi_6+ \frac{9}{35}\psi_2\psi_8,\\
\psi_{14} & = & \begin{array}[t]{l} -\frac{12190772504}{2219805}\psi_2^7+\frac{7281979}{1522152}\psi_2^4\psi_6-\frac{471547}{789264}\psi_2^3\psi_8+\frac{3779}{6765120}\psi_2\psi_6^2 \\ +\frac{73490}{8613}\psi_{12}\psi_2-\frac{1}{25920}\psi_6\psi_8, \end{array}\\
\psi_{18} & = & \begin{array}[t]{l} -\frac{1271044247268145576}{94705443405}\psi_2^9+\frac{256749355304}{25982289}\psi_2^6\psi_6-\frac{12475637391961}{9093801150}\psi_2^5\psi_8 \\ +\frac{6360724111}{3117874680}\psi_2^3\psi_6^2+\frac{470160383920}{31756131}\psi_{12}\psi_2^3-\frac{5935967}{41810580}\psi_2^2\psi_6\psi_8 \\ -\frac{101699}{30970800}\psi_2\psi_8^2-\frac{1}{52488}\psi_6^3-\frac{20}{27}\psi_{12}\psi_6. \end{array}\\
\end{array}\right.$}
\end{center}
\medskip

Define 
 \begin{adjustwidth}{-1cm}{1.5cm} 
\begin{center}
$\begin{array}[t]{rccc}
f :  & (\mathfrak{h}/W)^\Omega & \rightarrow & \mathfrak{h}'/W' \\
   & \begin{pmatrix}
   t_2  \\
   0 \\
   t_6 \\
   t_8 \\
   0 \\
   t_{12}
   \end{pmatrix} & \mapsto & \resizebox{\linewidth}{!}{$ \renewcommand{\arraystretch}{1.5}\begin{pmatrix}
   t_2 \\
   \frac{16735}{9}t_2^3-3000t_6 \\
   \frac{4884005}{972}t_2^4-\frac{360500}{9}t_2t_6+50000t_8 \\
   \frac{13113}{20}t_2^5-6300t_2^2t_6+\frac{90000}{7}t_2t_8 \\
    -\frac{1533855367}{5248800}t_2^6-\frac{5865805}{7776}t_2^3t_6+\frac{2927375}{504}t_2^2t_8-\frac{34375}{72}t_6^2-3125t_{12} \\
   -\frac{4794135161101}{9205975296}t_2^7+\frac{2226935425}{6088608}t_2^4t_6+\frac{4765011875}{295974}t_2^3t_8-\frac{103796875}{28188}t_2t_6^2-\frac{229656250}{8613}t_2t_{12}+\frac{156250}{27}t_6t_8  \\
 \begin{smallmatrix}
       \frac{53845033157553005}{8418261636}t_2^6t_6-\frac{2973773239515500}{545628069}t_2^5t_8-\frac{2028480753289375}{155893734}t_2^3t_6^2-\frac{3997692756437500}{95268393}t_2^3t_{12} -\frac{635618750000}{77427}t_2t_8^2 \\ -\frac{62500000}{9}t_{12}t_6+\frac{32999899562500}{696843}t_2^2t_6t_8-\frac{74647399081995101197}{218201341605120}t_2^9-\frac{132812500}{243}t_6^3
        \end{smallmatrix}   \\
   \end{pmatrix}$}
\end{array}$
\end{center}
 \end{adjustwidth}
\bigskip
 
\noindent The morphism $f$ is injective and if $(\psi_2,\psi_6,\psi_8,\psi_{10},\psi_{12},\psi_{14},\psi_{18}) \in \pi'(H_{\alpha_2} \bigcap H_{\alpha_5} \bigcap H_{\alpha_7})$, then by setting 
\begin{center}
\renewcommand{\arraystretch}{1.5}
\raisebox{-.55\height}{$\left\lbrace\begin{array}{l}
t_2=\psi_2, \\
t_6= \frac{3347}{5400}\psi_2^3-\frac{1}{3000}\psi_6, \\
t_8= \frac{1}{50000}\psi_8+\frac{1283191}{3240000}\psi_2^4-\frac{721}{2700000}\psi_2\psi_6, \\
t_{12}=\frac{18995770783}{43740000000}\psi_2^6-\frac{342859}{972000000}\psi_2^3\psi_6+\frac{23419}{630000000}\psi_2^2\psi_8-\frac{11}{648000000}\psi_6^2-\frac{1}{3125}\psi_{12},
\end{array}\right.$}
\end{center}
it follows that $f(t_2,0,t_6,t_8,0,t_{12})=(\psi_2,\psi_6,\psi_8,\psi_{10},\psi_{12},\psi_{14},\psi_{18})$. Hence
\begin{center}
 \begin{tikzpicture}[scale=1,  transform shape]
\node (1) at ( 0,0)  {$f :  (\mathfrak{h}/W)^\Omega$};
\node (2) at ( 6,0)  {$\pi'(H_{\alpha_2} \bigcap H_{\alpha_5} \bigcap H_{\alpha_7})  \ \mathlarger{\mathlarger{\subset}} \  \mathfrak{h}'/W'.$};
\node (3) at (2.3,0.2)  {$\cong$};
\draw  [decoration={markings,mark=at position 1 with
    {\arrow[scale=1.2,>=stealth]{>}}},postaction={decorate}] (1)  --  (2);
\end{tikzpicture}
\end{center}

Let $\phi'$ be a sub-root system in Table~\ref{realizationsE6}, and set $h' \in \bigcap_{\alpha \in \phi'}H_\alpha$. With the preceding expressions, we compute $(t_2,0,t_6,t_8,0,t_{12})=f^{-1}(\pi'(h'))$, and verify the following correspondence : 
\begin{center}
\renewcommand{\arraystretch}{1.5}
\begin{longtable}[h]{lcl}
$\phi'$ of type $A_1+A_1+A_1$ &  \begin{tikzpicture}[scale=1,  transform shape]
\node (1) at ( 0,0)  {};
\node (2) at ( 2,0)  {};
\draw  [decoration={markings,mark=at position 1 with
    {\arrow[scale=1.2,>=stealth]{>}}},postaction={decorate}] (1)  --  (2);
\draw  [decoration={markings,mark=at position 1 with
    {\arrow[scale=1.2,>=stealth]{>}}},postaction={decorate}] (2)  --  (1);
\end{tikzpicture} & $(t_2,t_6,t_8,t_{12})$ generic, \\
$\phi'$ of type $A_3+A_1$ &  \begin{tikzpicture}[scale=1,  transform shape]
\node (1) at ( 0,0)  {};
\node (2) at ( 2,0)  {};
\draw  [decoration={markings,mark=at position 1 with
    {\arrow[scale=1.2,>=stealth]{>}}},postaction={decorate}] (1)  --  (2);
\draw  [decoration={markings,mark=at position 1 with
    {\arrow[scale=1.2,>=stealth]{>}}},postaction={decorate}] (2)  --  (1);
\end{tikzpicture} & $(t_2,t_6,t_8,t_{12}) \in \mathcal{H}_1$, \\
$\phi'$ of type $D_4+A_1$ &  \begin{tikzpicture}[scale=1,  transform shape]
\node (1) at ( 0,0)  {};
\node (2) at ( 2,0)  {};
\draw  [decoration={markings,mark=at position 1 with
    {\arrow[scale=1.2,>=stealth]{>}}},postaction={decorate}] (1)  --  (2);
\draw  [decoration={markings,mark=at position 1 with
    {\arrow[scale=1.2,>=stealth]{>}}},postaction={decorate}] (2)  --  (1);
\end{tikzpicture} & $(t_2,t_6,t_8,t_{12}) \in \mathcal{H}_1 \bigcap \{t_8=\frac{1}{96}\frac{t_2^6+96t_6^2}{t_2^2}\}$, \\
$\phi'$ of type $D_5+A_1$ &  \begin{tikzpicture}[scale=1,  transform shape]
\node (1) at ( 0,0)  {};
\node (2) at ( 2,0)  {};
\draw  [decoration={markings,mark=at position 1 with
    {\arrow[scale=1.2,>=stealth]{>}}},postaction={decorate}] (1)  --  (2);
\draw  [decoration={markings,mark=at position 1 with
    {\arrow[scale=1.2,>=stealth]{>}}},postaction={decorate}] (2)  --  (1);
\end{tikzpicture} & $(t_2,t_6,t_8,t_{12}) \in \mathcal{H}_1 \bigcap \{t_8=\frac{1}{96}\frac{t_2^6+96t_6^2}{t_2^2}\} \bigcap \{t_2^3+8t_6=0\}$, \\
$\phi'$ of type $D_6$ &  \begin{tikzpicture}[scale=1,  transform shape]
\node (1) at ( 0,0)  {};
\node (2) at ( 2,0)  {};
\draw  [decoration={markings,mark=at position 1 with
    {\arrow[scale=1.2,>=stealth]{>}}},postaction={decorate}] (1)  --  (2);
\draw  [decoration={markings,mark=at position 1 with
    {\arrow[scale=1.2,>=stealth]{>}}},postaction={decorate}] (2)  --  (1);
\end{tikzpicture} & $(t_2,t_6,t_8,t_{12}) \in \mathcal{H}_1 \bigcap \{t_8=\frac{1}{96}\frac{t_2^6+96t_6^2}{t_2^2}\} \bigcap \{t_2^3-8t_6=0\}$, \\
$\phi'$ of type $A_5$ &  \begin{tikzpicture}[scale=1,  transform shape]
\node (1) at ( 0,0)  {};
\node (2) at ( 2,0)  {};
\draw  [decoration={markings,mark=at position 1 with
    {\arrow[scale=1.2,>=stealth]{>}}},postaction={decorate}] (1)  --  (2);
\draw  [decoration={markings,mark=at position 1 with
    {\arrow[scale=1.2,>=stealth]{>}}},postaction={decorate}] (2)  --  (1);
\end{tikzpicture} & $(t_2,t_6,t_8,t_{12}) \in \mathcal{H}_1 \bigcap \{t_8=-\frac{1}{192}t_2^4+\frac{1}{4}t_2t_6\}$, \\
$\phi'$ of type $A_1+A_1+A_1+A_1$ &  \begin{tikzpicture}[scale=1,  transform shape]
\node (1) at ( 0,0)  {};
\node (2) at ( 2,0)  {};
\draw  [decoration={markings,mark=at position 1 with
    {\arrow[scale=1.2,>=stealth]{>}}},postaction={decorate}] (1)  --  (2);
\draw  [decoration={markings,mark=at position 1 with
    {\arrow[scale=1.2,>=stealth]{>}}},postaction={decorate}] (2)  --  (1);
\end{tikzpicture} & $(t_2,t_6,t_8,t_{12}) \in \mathcal{H}_2 \backslash \mathcal{H}_1$, \\
$\phi'$ of type $A_2+A_1+A_1+A_1$ &  \begin{tikzpicture}[scale=1,  transform shape]
\node (1) at ( 0,0)  {};
\node (2) at ( 2,0)  {};
\draw  [decoration={markings,mark=at position 1 with
    {\arrow[scale=1.2,>=stealth]{>}}},postaction={decorate}] (1)  --  (2);
\draw  [decoration={markings,mark=at position 1 with
    {\arrow[scale=1.2,>=stealth]{>}}},postaction={decorate}] (2)  --  (1);
\end{tikzpicture} & $(t_2,t_6,t_8,t_{12}) \in \mathcal{H}_2 \bigcap \{t_8 = -\frac{1}{192}t_2^4-\frac{1}{4}t_2t_6\}$, \\
$\phi'$ of type $A_3+A_1+A_1$ &  \begin{tikzpicture}[scale=1,  transform shape]
\node (1) at ( 0,0)  {};
\node (2) at ( 2,0)  {};
\draw  [decoration={markings,mark=at position 1 with
    {\arrow[scale=1.2,>=stealth]{>}}},postaction={decorate}] (1)  --  (2);
\draw  [decoration={markings,mark=at position 1 with
    {\arrow[scale=1.2,>=stealth]{>}}},postaction={decorate}] (2)  --  (1);
\end{tikzpicture} & $(t_2,t_6,t_8,t_{12}) \in \mathcal{H}_1 \bigcap\mathcal{H}_2$, \\
$\phi'$ of type $A_3+A_2+A_1$ &  \begin{tikzpicture}[scale=1,  transform shape]
\node (1) at ( 0,0)  {};
\node (2) at ( 2,0)  {};
\draw  [decoration={markings,mark=at position 1 with
    {\arrow[scale=1.2,>=stealth]{>}}},postaction={decorate}] (1)  --  (2);
\draw  [decoration={markings,mark=at position 1 with
    {\arrow[scale=1.2,>=stealth]{>}}},postaction={decorate}] (2)  --  (1);
\end{tikzpicture} & $(t_2,t_6,t_8,t_{12}) \in \mathcal{H}_1 \bigcap\mathcal{H}_2 \bigcap \{t_8 = -\frac{1}{192}t_2^4-\frac{1}{4}t_2t_6\}$, \\
$\phi'$ of type $A_5+A_1$ &  \begin{tikzpicture}[scale=1,  transform shape]
\node (1) at ( 0,0)  {};
\node (2) at ( 2,0)  {};
\draw  [decoration={markings,mark=at position 1 with
    {\arrow[scale=1.2,>=stealth]{>}}},postaction={decorate}] (1)  --  (2);
\draw  [decoration={markings,mark=at position 1 with
    {\arrow[scale=1.2,>=stealth]{>}}},postaction={decorate}] (2)  --  (1);
\end{tikzpicture} & $(t_2,t_6,t_8,t_{12}) \in \mathcal{H}_1 \bigcap\mathcal{H}_2 \bigcap \{t_8 = -\frac{1}{192}t_2^4+\frac{1}{4}t_2t_6\}$, \\
$\phi'$ of type $E_7$ &  \begin{tikzpicture}[scale=1,  transform shape]
\node (1) at ( 0,0)  {};
\node (2) at ( 2,0)  {};
\draw  [decoration={markings,mark=at position 1 with
    {\arrow[scale=1.2,>=stealth]{>}}},postaction={decorate}] (1)  --  (2);
\draw  [decoration={markings,mark=at position 1 with
    {\arrow[scale=1.2,>=stealth]{>}}},postaction={decorate}] (2)  --  (1);
\end{tikzpicture} & $(t_2,t_6,t_8,t_{12})=(0,0,0,0)$. \\

\end{longtable}
\end{center}
With Table~\ref{TypesofsingularE6}, we see that the singular configuration in the fiber of $\overline{\alpha^\Omega}$ above $f^{-1}(\pi'(h'))$ is of the same type as $\phi'$. Hence for $E_6-F_4-E_7$, the map $f$ realizes a bijection between the singular configurations of the fibers of $\overline{\alpha^\Omega}$ and the sub-root systems of $E_7$ containing $\Theta=\{\alpha_2,\alpha_5,\alpha_7\}$.
\medskip

\section*{Conclusion}
\addcontentsline{toc}{section}{Conclusion}
Conjecture~\ref{Conjecture} and the subsequent proof (Theorem~\ref{Maintheorem}) for small rank cases give a direct link between the singular configurations of the fibers of the deformation $\overline{\alpha^\Omega}$ of a simple singularity of type $\Delta(\Gamma')$ and sub-root systems of a root system of type $\Delta(\Gamma')$. As every fiber of $\overline{\alpha^\Omega}$ is singular (Theorem~\ref{ThmSingular}) and as the Dynkin diagram associated to the singular configuration of such a fiber has to contain the subset $\Theta$ of simple roots of a root system of type $\Delta(\Gamma')$, this subset $\Theta$ characterizes how far the morphism $\overline{\alpha^\Omega}$ is from being isomorphic to the semiuniversal deformation $\chi'_{| S_{e'}}$ of the simple singularity of type $\Delta(\Gamma')$. \\
The full proof of Conjecture~\ref{Conjecture} requires another approach. Indeed the proof presented in Section~\ref{Proof} is based on direct computations of the singular configurations of the fibers of $\alpha^\Omega$ and of $\overline{\alpha^\Omega}$. But these computations themselves stem from the equation of the deformation $\alpha^\Omega$, which is not known when $\alpha^\Omega$ is the semiuniversal deformation of a simple singularity of type $C_{r}$, $r \geq 6$ (i.e. $\Gamma=\mathcal{D}_{r-1}$ and $\Gamma'=\mathcal{D}_{2(r-1)}$). This is due to the fact that in order to obtain $\alpha^\Omega$, we need to determine the independent cycles of $M(\Gamma)$ (cf. Section~\ref{SlodowyCassens}), and as the dimension vector of $M(\Gamma)$ is $(1,1,2,....,2,1,1)$, the $2$-dimensional vertices introduce a complexity in the relations between the cycles, complicating the computation of $\alpha^\Omega$ when the rank $r$ increases (cf. \cite{Cara17} Section 4.3.7 for details). The author is therefore working on a more general approach which would provide a proof of Conjecture~\ref{Conjecture} in all generality.

\section*{Acknowledgements}
\addcontentsline{toc}{section}{Acknowledgements}
The author would like to thank Michaël Bulois for his helpful comments and remarks that helped improve the form of the present article, as well as Kenji Iohara for the constancy of his support and guidance.

\phantomsection
\addcontentsline{toc}{section}{References}
\bibliographystyle{acm}
\bibliography{bibliography}

\end{document}